\newcommand{\stkout}[1]{\ifmmode\text{\sout{\ensuremath{#1}}}\else\sout{#1}\fi}
\newtheorem{theorem}{Theorem}
\newtheorem{proposition}[theorem]{Proposition}
\newtheorem{corollary}[theorem]{Corollary}
\newtheorem{lemma}[theorem]{Lemma}
\theoremstyle{definition}
\newtheorem{remark}{Remark}
\newtheorem{definition}{Definition}
\newcommand{\arxivonly}[1]{#1}
\newcommand{\journalonly}[1]{}
\newcommand{\new}[1]{#1}
\newcommand{\old}[1]{}
\title{Two-dimensional fluids via matrix hydrodynamics}
\author{Klas Modin${^{1\journalonly{,*}}}$}
\address{${^1}$Department of Mathematical Sciences, Chalmers University of Technology and University of Gothenburg, SE-412~96 Gothenburg, Sweden}
\email{klas.modin@chalmers.se}
\thanks{${}^*$Corresponding author, \texttt{klas.modin@chalmers.se}, \href{https://orcid.org/0000-0001-6900-1122}{ORCID:0000-0001-6900-1122}}
\author{Milo Viviani${^{2\journalonly{,\dagger}}}$}
\address{${^2}$Scuola Normale Superiore di Pisa, Pisa, Italy}
\email{milo.viviani@sns.it}
\subjclass[2020]{35Q31, 53D50, 76M60, 76B47, 53D25}
\date{\today}
\newcommand{\stab}{\mathrm{stab}}
\newcommand{\su}{\mathfrak{su}}
\newcommand{\Ss}{\mathbb{S}}
\newcommand{\Tt}{\mathbb{T}}
\newcommand{\Rr}{\mathbb{R}}
\newcommand{\Zz}{\mathbb{Z}}
\newcommand{\Cc}{\mathbb{C}}
\newcommand{\bracketLP}[1]{\prec\! #1 \! \succ\!}
\begin{document}

\dedicatory{To the memory of Vladimir Zeitlin, the father of matrix hydrodynamics.}

\begin{abstract}
Two-dimensional (2-D) incompressible, inviscid fluids produce fascinating patterns of swirling motion. 
How and why the patterns emerge are long-standing questions, first addressed in the 19th century by Helmholtz, Kirchhoff, and Kelvin. 
Countless researchers have since contributed to innovative techniques and results. Yet, the overarching problem of swirling 2-D motion and its long-time behavior remains largely open. 
Here we shed light on this problem via a link to isospectral matrix flows. 
The link is established through V.\;Zeitlin's beautiful model for the numerical discretization of Euler's equations in 2-D. 
When considered on the sphere, Zeitlin's model offers deep connections between 2-D hydrodynamics and unitary representations of the rotation group. 
Consequently, it provides a dictionary that maps hydrodynamical concepts to matrix Lie theory, which in turn gives connections to matrix factorizations, random matrices, and integrability theory, for example. 
Results about finite-dimensional matrices can then be transferred to infinite-dimensional fluids via quantization theory, which is here used as an analysis tool (albeit traditionally describing the limit between quantum and classical physics).
We demonstrate how the dictionary is constructed and how it unveils techniques for 2-D hydrodynamics. 
We also give accompanying convergence results for Zeitlin's model on the sphere.
\end{abstract}

\maketitle

\vspace{-3ex}

\arxivonly{
\vspace{-5ex}
\tableofcontents
}

\journalonly{
\subsection*{Statements and Declarations}
The authors have no relevant financial or non-financial interests to disclose.
Data sharing is not applicable to this article as no datasets were generated or analyzed during the current study.

\subsection*{Acknowledgements}
This work was supported by the Swedish Research Council (grant number 2022-03453), the Knut and Alice Wallenberg Foundation (grant numbers WAF2019.0201 and KAW2020.0287), and the Göran Gustafsson Foundation for Research in Natural Sciences and Medicine.
The computations were enabled by resources provided by Chalmers e-Commons at Chalmers.
The data handling was enabled by resources provided by the National Academic Infrastructure for Supercomputing in Sweden (NAISS), partially funded by the Swedish Research Council through grant agreement no.~2022-06725.
We would like to thank Michele Dolce and Theo Drivas for fruitful discussions related to this work, and Michael Roop and Ali Suri for important comments related to the convergence proof in Appendix~\ref{appendix:convergence_proof}.
Furthermore, we thank the anonymous reviewers for many helpful suggestions.
}

\section{Introduction}

If you stir up an incompressible, low-viscosity fluid confined to a thin (almost 2-D) bounded domain, then, under conditions on the initial configuration and the applied forcing, what you see is spectacular, completely different from the situation in full 3-D.
The fluid self-organizes into large, coherently swirling regions called vortex condensates.\footnote{The effect can be experimentally reproduced by a thin soap film flowing rapidly through a fine comb~\cite{Co1984}, or by a conducting fluid confined to a thin layer and driven into turbulence by a temporally varying magnetic field~\cite{So1988}.}
Some regions swirl counterclockwise (positive vorticity), others clockwise (negative vorticity), with occasional merger between equal-signed regions, and repulsion between opposite-signed regions. 
What is the mechanism of vortex condensation and under which conditions does it occur?
More mathematically formulated, what is the generic long-time behavior of the 2-D Euler equations?

Towards an answer, a milestone was reached in 1949 when Onsager~\cite{On1949} applied statistical mechanics to a large but finite number of point vortices on a flat torus (doubly periodic square). 
Onsager understood that if the energy of a configuration is large, relative to the vortex strengths, then the thermodynamical temperature is negative. 
Point vortices of equal sign will then tend to cluster ``so as to use up excess energy at the least possible cost in terms of degrees of freedom'' \cite[p.~281]{On1949}.
Statistical mechanics thus predicts vortex condensation.
Under the ergodicity assumption, it also predicts long-time states corresponding to thermal equilibrium.
Since then, Onsager's ideas have fostered several rigorous results, for example, weak convergence of the Gibbs measure, in both positive and negative temperature regimes, as the number of point vortices increase \cite{CaLiMaPu1992,Ki1993}.

But there is a problem with the point vortex approach, as Onsager himself pointed out: ``When we compare our idealized model with reality, we have to admit one profound difference: the distribution of vorticity which occur in the actual flow of normal liquids are continuous'' \cite[p.~281]{On1949}.
In particular, solutions to the 2-D Euler equations with continuous, bounded vorticity possess a rich geometric structure: they make up an infinite-dimensional Lie--Poisson system, with infinitely many conservation laws given by Casimir functions (\emph{cf.}~Sec.~\ref{sec:background} below).
Whereas point-vortices formally fit into this structure, with Casimirs corresponding to the vortex strengths, they do it in a weak sense, which fails to capture the incompressible nature of the fluid, for example.
In addition, point vortices cannot capture relations between the continuous Casimirs, and those relations affect the long-time behavior (\emph{cf.}~Abramov and Majda~\cite{AbMa2003b}).
Moreover, the ergodicity assumption for Onsager's statistical approach is invalid.\footnote{This does not, however, wholly discredit the statistical theory; see the discussion in Marchioro and Pulvirenti~\cite[Sec.~7.5]{MaPu2012}.}
So, in summary, the point vortex model is insufficient to describe the long-time behavior of continuous solutions.

Another established technique for 2-D hydrodynamics is non-linear PDE analysis.
It has led to many rigorous and deep results, but at the cost of restricted settings, for example perturbations of steady solutions.
We refer to the monograph of Marchioro and Pulvirenti~\cite{MaPu2012} and the lecture notes of {\v{S}}verák~\cite{Sv2011} for an overview.

The grand vision is to bridge PDE analysis results with predictions from statistical hydrodynamics.
It is a notoriously difficult problem.
As a flavor of the difficulty, the two approaches seem incompatible:
The statistically predicted vortex condensation implies merging of equally signed vorticity regions.
Yet, in the dynamics of the \mbox{2-D} Euler equations, vorticity is advected by the fluid velocity field, so merging can only occur by trapping increasingly narrow, but deep, vorticity variations in an overwhelmingly complicated stretch-and-fold process.
These variations cannot disappear in the $C^1$-topology.
Indeed, the $C^1$-norm along regular solutions can grow extremely fast, up to double exponential \cite{KiSv2014}, and in numerical simulations it typically does grow extremely fast.

\begin{figure*}
    \centering
    \begin{subfigure}{0.325\textwidth}
        \includegraphics[width=\textwidth]{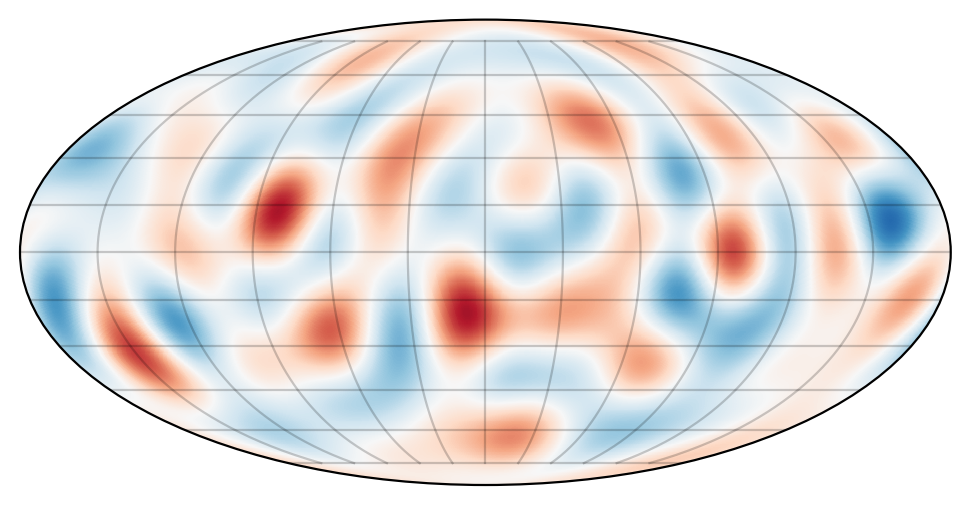}
        \caption{Initial vorticity $\omega_0$}
        \label{fig:non-vanish-initial}
    \end{subfigure}
    \begin{subfigure}{0.325\textwidth}
        \includegraphics[width=\textwidth]{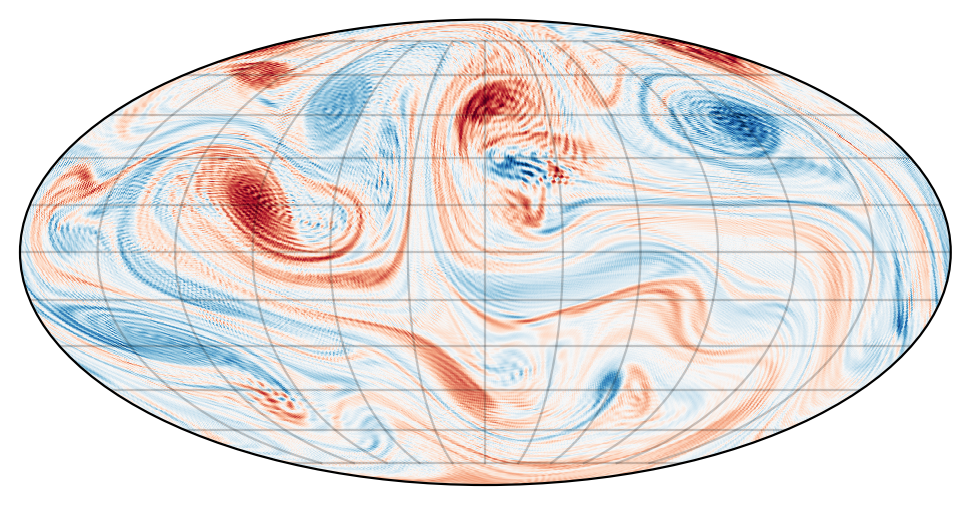}
        \caption{Mixing phase}
        \label{fig:non-vanish-mixing}
    \end{subfigure}
    \begin{subfigure}{0.325\textwidth}
        \includegraphics[width=\textwidth]{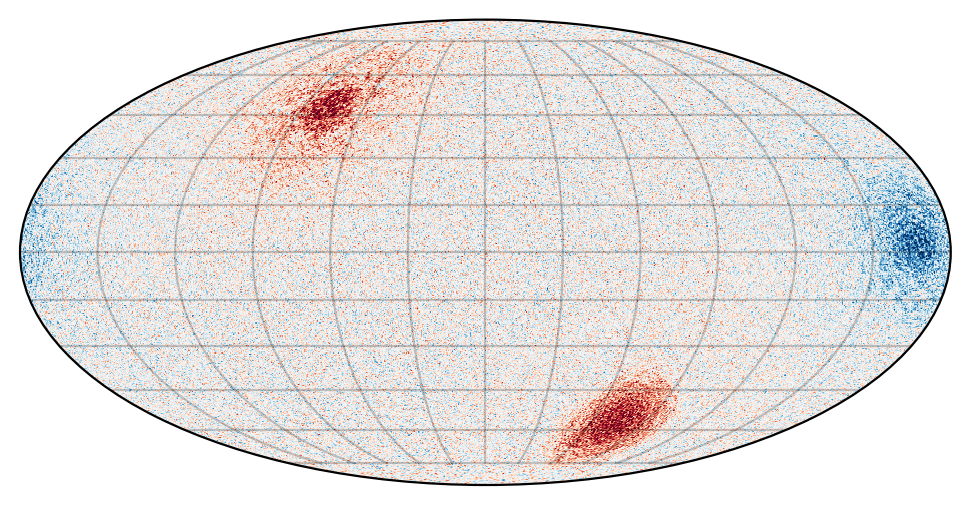}
        \caption{Long-time phase}
        \label{fig:non-vanish-final}
    \end{subfigure}
    \caption{
        A numerical simulation of the vorticity field for Euler's equations on $\Ss^2$. 
        The results are displayed using the Mollweide area preserving projection.
        For random smooth initial data \textsc{(a)}, the typical dynamical behavior is a mixing phase \textsc{(b)} where vorticity regions of equal sign undergo merging, followed by a long-time phase \textsc{(c)} of 3 or 4 remaining large, weakly interacting vortex condensates whose centers of mass move along nearly quasi periodic trajectories.
    }
    \label{fig:non-vanishing}
\end{figure*}

A third tool for 2-D hydrodynamics is to carry out numerical experiments (as illustrated in Fig.~\ref{fig:non-vanishing}, where the mixing phase \textsc{(b)} captures a typical stretch-and-fold process).
Indeed, since the work of Lorentz~\cite{Lo1962} in the 1960s, computer simulations have guided many analytical results in fluid dynamics; recent examples are found in the references \cite{Gr1990, Sh2013, KiSv2014, ElJe2020, ChHoHu2021,WaLaGoBu2022}.
But are the numerical outcomes consistent with the geometry known to influence the long-time behavior?

To address this question, we can ask for the finite-dimensional dynamical model resulting from the discretization to be itself a Lie--Poisson system, whose finite-dimensional Lie--Poisson bracket approximates the infinite-dimen\-sional one.
Such a model was given by Zeitlin~\cite{Ze1991,Ze2004}, based on quantization results of Hoppe~\cite{Ho1982,Ho1989}. 
It is the only known Lie--Poisson discretization of the 2-D Euler equations, and numerical simulations indicate that it captures the qualitative behavior better than traditional discretizations (e.g., the predicted spectral scaling law, which standard methods fail to catch even at significantly higher resolutions~\cite{CiViLuMoGe2022}).
Yet, the model is vastly underexplored.
%


This paper began with a simple idea: in addition to numerical advantages, \emph{matrix hydrodynamics} -- the matrix flows enabled via Zeitlin's model -- brings theoretical insights to classical 2-D hydrodynamics by providing a link to the rich and well-developed theory of matrices. 
The point is to put forward matrix hydrodynamics as a tool for 2-D turbulence, similar in spirit to the point vortex model of Onsager, but with the benefit of capturing the Lie--Poisson geometry of continuous vorticity fields, including conservation of continuous Casimirs.

Besides the examples we give throughout the paper, a motivation and feasibility check for our idea comes from complex geometry.
Indeed, in the late 1980s, quantization was promoted to address questions about Kähler--Einstein metrics and the Calabi conjecture.
Tian~\cite{Ti1990} then gave convergence results, since used regularly in theoretical contexts.
Only much later did Donaldson~\cite{Do2009} use quantization in a numerical study of Kähler--Einstein metrics; the antithesis to the developments in 2-D hydrodynamics, where Zeitlin's model has been a numerical tool, until recently.

Our primary aim with the paper is to give basic convergence results for matrix hydrodynamics in the spherical setting (Theorem~\ref{thm:convergence}, Theorem~\ref{thm:spectral_measure_convergence}, Corollary~\ref{thm:consistency}, and Theorem~\ref{thm:space-time-convergence-detailed}), and to showcase the fluid-to-matrix link in selected examples and relate it to existing theories (Sec.~\ref{sec:coadjoint-orbits}-\ref{sec:integrability-and-long-time}).
Our secondary aim, addressed in Sec.~\ref{sec:quantization-and-EZ}, is to give a transparent derivation of Zeitlin's model on the sphere, based on unitary representation theory and focused on geometry rather than algebraic formulae.
But first, we review in Sec.~\ref{sec:background} the geometry of the 2-D Euler equations.

\arxivonly{
\medskip
\noindent\textbf{Acknowledgements} 
This work was supported by the Swedish Research Council (grant number 2022-03453), the Knut and Alice Wallenberg Foundation (grant numbers WAF2019.0201 and KAW2020.0287), and the Göran Gustafsson Foundation for Research in Natural Sciences and Medicine.
The computations were enabled by resources provided by Chalmers e-Commons at Chalmers.
The data handling was enabled by resources provided by the National Academic Infrastructure for Supercomputing in Sweden (NAISS), partially funded by the Swedish Research Council through grant agreement no.~2022-06725.
We would like to thank Michele Dolce and Theo Drivas for fruitful discussions related to this work, and Michael Roop and Ali Suri for important comments related to the convergence proof in Appendix~\ref{appendix:convergence_proof}.
}

\section{Background: geometry of the 2-D Euler equations}\label{sec:background}

For an incompressible, inviscid fluid, Euler~\cite{Eu1757} showed how Newton's second law leads to partial differential equations (PDEs) for the velocity field $\boldsymbol{v}(x,t)$ of the fluid's motion,
\begin{equation}\label{eq:euler}
       \frac{\partial \boldsymbol{v}}{\partial t} + \nabla_{\boldsymbol{v}}\boldsymbol{v} = -\nabla p ,\quad \operatorname{div}\boldsymbol{v} = 0,
\end{equation}
where $p$ is the pressure function and $\nabla_{\boldsymbol{v}}$ is the co-variant derivative (furthermore, $\boldsymbol{v}$ is tangential to the boundary if present).
Euler's equations~\eqref{eq:euler} make sense on any Riemannian manifold, but in 2-D the geometric structure is richer than in higher dimensions, and the dynamics is significantly different.
Throughout this paper we shall work on the sphere $\Ss^2$, partly because of its relevance in geophysical contexts, but mostly because it later allows a description in terms of unitary representation theory that renders the geometric structures in Zeitlin's model more transparent.

Let $J\colon T \Ss^2 \to T \Ss^2$ denote the bundle mapping for rotation by $\pi/2$ in positive direction.
The curl operator maps the vector field $\boldsymbol{v}$ on $\Ss^2$ to the \emph{vorticity function} 
\begin{equation*}
    \omega = \operatorname{curl}\boldsymbol{v} \equiv \operatorname{div}(J\boldsymbol{v}).    
\end{equation*}
Since $\operatorname{curl}\circ \nabla \equiv 0$, we obtain from Euler's equations \eqref{eq:euler} that $\omega$ fulfills the transport equation 
\begin{equation*}
    \frac{\partial \omega}{\partial t} + \operatorname{div}(\omega \boldsymbol{v}) = 0.
\end{equation*}
Further, since the first co-homology of the sphere is trivial, it follows from Hodge theory that the divergence free vector field $\boldsymbol{v}$ can be written $\boldsymbol{v} = \nabla^\bot \psi \equiv -J\nabla\psi$, where $\psi$ is the \emph{stream function}, unique up to a constant.
Euler's equations \eqref{eq:euler} is then formulated entirely in terms of the vorticity and stream functions
\begin{equation}\label{eq:vorteq}
    \frac{\partial \omega}{\partial t} + \operatorname{div}(\omega \nabla^\bot \psi) = 0, \quad -\Delta\psi = \omega.
\end{equation}

Following Arnold~\cite{Ar1966} and Marsden and Weinstein~\cite{MaWe1983},
we now demonstrate how to arrive at these equations from the point-of-view of symplectic geometry and Hamiltonian dynamics.
 
The sphere is a symplectic manifold with symplectic form given by the spherical area form $\mu$ (in spherical coordinates $(\theta,\phi) \in [0,\pi]\times [0,2\pi)$ the area form is $\mu=\sin\theta\,\mathrm{d}\theta\wedge \mathrm{d}\phi$).
Given a Hamiltonian function $f$ on $\Ss^2$, the corresponding symplectic vector field $X_f$ is defined by 
\begin{equation}\label{eq:hameq}
     \iota_{X_f}\mu = \mathrm{d}f. 
\end{equation}
If we identify 1-forms with vector fields using the Riemannian structure of $\Ss^2$, equation~\eqref{eq:hameq} becomes 
\begin{equation*}
   J X_f = \nabla f \iff X_f = -J \nabla f.
\end{equation*}
Thus, the fluid velocity field is the symplectic vector field for the Hamiltonian given by the stream function: $\boldsymbol{v} = X_\psi$.
 
The space $\mathfrak{X}_\mu(\Ss^2)$ of smooth, symplectic vector fields on $\Ss^2$ forms an infinite-dimensional Lie algebra (with the vector field bracket $[\cdot,\cdot]_\mathfrak{X}$).
This algebra is isomorphic to the Poisson algebra of smooth functions modulo constants $C^\infty(\Ss^2)/\Rr$ via the mapping $\psi \mapsto X_\psi$.
Indeed,
\begin{equation*}
    -[X_\psi,X_\xi]_\mathfrak{X} = X_{\{\psi,\xi \}} \qquad\text{where}\qquad \{\psi,\xi \} = \nabla\psi\cdot \nabla^\bot \xi .
\end{equation*}
Let us now describe the connection between the infinite-dimensional Lie algebra $\mathfrak{X}_\mu(\Ss^2)$ and the vorticity equation~\eqref{eq:vorteq}.

Let $\mathfrak{g}$ be a Lie algebra. 
Its dual $\mathfrak{g}^*$ is a Poisson manifold via its \emph{Lie--Poisson bracket} defined on functions $F,H \in C^\infty(\mathfrak{g}^*)$ by
\begin{equation}\label{eq:LP_bracket_abstract}
    \bracketLP{F, H}(\omega) = \langle\omega, [\mathrm{d}F(\omega), \mathrm{d}H(\omega)] \rangle , \qquad \omega\in\mathfrak{g}^*,
\end{equation}
where $[\cdot,\cdot]$ denotes the Lie bracket on $\mathfrak{g}$ and $\langle\cdot,\cdot\rangle$ is the pairing between $\mathfrak{g}^*$ and $\mathfrak{g}$.
The Lie-Poisson bracket~\eqref{eq:LP_bracket_abstract} originates from the canonical symplectic structure on the co-tangent bundle of the corresponding Lie group $G$, via symmetry reduction $T^*G/G\simeq \mathfrak{g}^*$. 
For details on this framework and its application in hydrodynamics, we refer to Arnold and Khesin~\cite{ArKh1998}.

In the infinite-dimensional case of the group of symplectomorphisms on $\Ss^2$ and its Lie algebra of smooth Hamiltonian (or stream) functions, i.e., $G=\mathrm{Diff}_\mu(\Ss^2)$ and $\mathfrak{g} = C^\infty(\Ss^2)/\Rr$, one usually restricts to the \emph{smooth dual}, which is constructed so that $C^\infty(\Ss^2)^* \simeq C^\infty(\Ss^2)$ via the $L^2$-pairing $\langle \omega,\psi \rangle = \int_{\Ss^2} \omega\psi\mu$.
As indicated, we think of vorticity $\omega$ as a dual variable to the stream function $\psi\in \mathfrak{g} = C^\infty(\Ss^2)/\Rr$.
Thus, $$\omega \in \mathfrak{g}^* = (C^\infty(\Ss^2)/\Rr)^* \simeq C_0^\infty(\Ss^2) = \{f\in C^\infty(\Ss^2)\mid \int_{\Ss^2} f \mu = 0 \}. $$
So the smooth dual of the Lie algebra of Hamiltonian functions (or equivalently symplectic vector fields) is given by smooth functions with vanishing mean.

Now, the Hamiltonian system on the Lie--Poisson space $\mathfrak{g}^*$ for a Hamiltonian function $H\colon \mathfrak{g}^* \to \Rr$ is formally given by
\begin{equation}\label{eq:LiePoisson}
    \dot \omega + \operatorname{ad}^*_{\frac{\delta H}{\delta \omega}} \omega = 0
\end{equation}
where $\dot\omega$ denotes time differentiation and $\operatorname{ad}^*_\psi\colon \mathfrak{g}^*\to\mathfrak{g}^*$ is defined by 
$
    \langle \operatorname{ad}^*_\psi \omega,\xi  \rangle = \langle \omega, \lbrace \psi, \xi \rbrace \rangle
$ for all $\xi\in\mathfrak{g}$.
In the case $\mathfrak{g} = C^\infty(\Ss^2)/\Rr$, with smooth dual $\mathfrak{g}^* = C^\infty_0(\Ss^2)$, we obtain from the divergence theorem that
\begin{equation*}
    \langle \omega, \lbrace \psi, \xi \rbrace \rangle  = -\int_{\Ss^2} \omega \nabla^\bot \psi\cdot \nabla \xi\, \mu 
    = \langle \operatorname{div}(\omega \nabla^\bot \psi),\xi \rangle
    = \langle \underbrace{\{ \omega, \psi\} }_{\operatorname{ad}^*_\psi\omega},\xi \rangle.
\end{equation*}
Hence, $\operatorname{ad}^*_\psi\omega$ is minus the Poisson bracket, which reflects that the $L^2$-pairing on the Poisson space $C^\infty(\Ss^2)$ is bi-invariant. 

Equation \eqref{eq:vorteq}, for the evolution of vorticity, can now be written as a Lie--Poisson system for the quadratic Hamiltonian
\begin{equation*}
    H(\omega) = \frac{1}{2}\int_{\Ss^2} \left| \nabla \psi \right|^2 \, \mu = \frac{1}{2}\int_{\Ss^2} \omega\underbrace{(-\Delta)^{-1}\omega}_{\psi} \, \mu    .
\end{equation*}
Its variational derivative is $\frac{\delta H}{\delta \omega} = \psi$.
Consequently, the Lie--Poisson form~\eqref{eq:LiePoisson} of equation \eqref{eq:vorteq} is
\begin{equation}\label{eq:vorteq2}
    \dot \omega + \lbrace \omega,\psi\rbrace = 0, \quad -\Delta\psi = \omega, 
\end{equation}
where we think of the Laplacian as an isomorphism between $\mathfrak{g}$ and $\mathfrak{g}^*$, i.e., $$\Delta\colon C^\infty(\Ss^2)/\Rr \to C^\infty_0(\Ss^2).$$
Equation \eqref{eq:vorteq2} is the starting point for Zeitlin's model: the notion is to replace the infinite-dimensional Lie algebra $\mathfrak{g}$ with a finite-dimensional one and then consider the corresponding Lie--Poisson system.
We give the details in the next section.
Here, we continue with a re-formulation of equation \eqref{eq:vorteq2} that reveals the connection to the Lie group of $\mathfrak{g}$.

From \eqref{eq:vorteq2} it is clear that the vorticity function $\omega$ is infinitesimally transported by the time-dependent Hamiltonian vector field $\boldsymbol{v} = X_\psi$.
Consequently, the flow map $\Phi\colon \Ss^2\times\Rr\to \Ss^2$ obtained by integrating the vector field 
\begin{equation*}
    \frac{\partial {\Phi(x,t)}}{\partial t} = \boldsymbol{v}\big(\Phi(x,t), t\big),
    \quad \Phi(x,0) = x,
\end{equation*}
is a curve $t\mapsto \Phi(\cdot,t)$ in the Lie group $G$ corresponding to the Lie algebra $\mathfrak{g}$.
Since the integration of a vector field yields a symplectic map if and only if the vector field is symplectic, it follows that the Lie group consists of symplectic diffeomorphisms of $\Ss^2$, denoted $\operatorname{Diff}_\mu(\Ss^2)$.
Since $\omega$ is transported, we have that $\omega(x,t) = \omega(\Phi^{-1}(x,t),0)$, or in more compact notation $\omega_t = \omega_0\circ\Phi_t^{-1}$.
The direct implication is that $\omega$ remains on the same \emph{coadjoint orbit} as $\omega_0$ (the orbits for the action of $\operatorname{Diff}_\mu(\Ss^2)$ on $C^\infty_0(\Ss^2)$).
Therefore, since the map $\Phi_t$ has unitary Jacobian determinant, any functional of the form
\begin{equation*}
    C_f(\omega) = \int_{\Ss^2} f\big(\omega(x,t)\big)\,\mu(x),\quad f\colon\Rr\to\Rr,
\end{equation*}
is conserved: these are the \emph{Casimir functions}.

The transport property furthermore implies weak (distributional), finite-dimensional coadjoint orbits given by $n$ interacting point vortices 
\begin{equation}\label{eq:PV}
    \omega = \sum_{k=1}^n \Gamma_k \delta_{x_k}.    
\end{equation}
These were found by Helmholtz~\cite{He1858}, and analyzed by Kirchhoff~\cite{Ki1876} and Kelvin~\cite{Ke1887}.
Much later, Onsager~\cite{On1949} used them is his theory of statistical hydrodynamics.
Let us see how they fit into the geometric framework of Lie-Poisson dynamics, following Marsden and Weinstein~\cite{MaWe1983}.
In general, if $G$ acts on a symplectic manifold $(M,\nu)$ by symplectomorphisms then there is a corresponding infinitesimal action vector field $\xi_M \in \mathfrak{X}_\nu(M)$ of $\xi\in\mathfrak{g}$.
Suppose that $\xi_M$ is Hamiltonian, i.e., there is a function $H_\xi\in C_0^\infty(M)$ such that $\iota_{\xi_M}\nu = \mathrm{d} H_\xi$.
Notice that $H_\xi$ must be linear in $\xi$.
Suppose moreover there is a smooth function $p\colon M\to \mathfrak{g}^*$ such that 
\begin{equation*}
    H_\xi(m) = \langle p(m),\xi \rangle .
\end{equation*}
Then $p$ is a \emph{momentum map}.
Often (but not always) we can take the momentum map to be equivariant, i.e., $p(g\cdot m) = \mathrm{Ad}^*_g(p(m))$.
If, in addition, the action of $G$ on $M$ is transitive, then the image $p(M)\subset \mathfrak{g}^*$ is a coadjoint orbit (generally weak).

In our specific situation, $G=\mathrm{Diff}_\mu(\Ss^2)$, we have thus far seen the tautological example, where $M$ is a symplectic leaf $\mathcal{O}_{\omega_0}$ of the Poisson structure restricted to the smooth dual $\mathfrak{g}^* = C_0^\infty(\Ss^2)$, and the momentum map is the inclusion $p\colon \mathcal{O}_{\omega_0}\to \mathfrak{g}^*$.
But $\mathrm{Diff}_\mu(\Ss^2)$ also acts symplectically on $\Ss^2$, by definition.
Via Cartesian extension, there is then a symplectic action of $\mathrm{Diff}_\mu(\Ss^2)$ on $M = (\Ss^2)^n\backslash \{ \text{diagonal} \}$ equipped with the symplectic structure
\begin{equation*}
    \nu_{(x_1,\ldots,x_n)} = \sum_{k=1}^n\Gamma_k \mu_{x_k},\qquad \Gamma_k \in \mathbb{R}\backslash \{0\}.
\end{equation*}
Since the action is transitive, we expect corresponding finite dimensional coadjoint orbits.
Indeed, $\xi_M(x_1,\ldots,x_n) = \big(\xi(x_1),\ldots,\xi(x_n)\big)$ so
\begin{equation*}
    \iota_{\xi_M}\nu = \sum_k \Gamma_k \iota_{\xi(x_k)}\mu = \sum_k \Gamma_k \mathrm{d} \psi(x_k) = \mathrm{d} \sum_{k}\Gamma_k \psi(x_k) = \mathrm{d}  \int_{\Ss^2}\underbrace{\left(\sum_{k} \Gamma_k\delta_{x_k}\right)}_{\omega = p(x_1,\ldots,x_n)} \psi \mu   ,
\end{equation*}
where $\xi = X_\psi$.
Thus, the associated finite dimensional (weak) coadjoint orbit is given by the point vortices \eqref{eq:PV}.
These orbits and the corresponding weak solutions show that the restriction to the smooth dual is not always legitimate.

To make the analogy with continuous vorticity functions, one should think of phase space for point vortices as parameterized by $(x_1,\ldots,x_n, \Gamma_1,\ldots,\Gamma_n)$.
The positions $x_k$ correspond to the level sets of a smooth vorticity function, and the strengths $\Gamma_k$ to the values associated with the level sets.
Just like the function values of the initial conditions $\omega_0\in C^\infty(\Ss^2)$ are preserved (since $\omega_t$ is transported), the strengths $\Gamma_k$ are conserved.
The dynamics for point vortices take place in the map $(x_1(0),\ldots,x_n(0))\mapsto (x_1(t),\ldots,x_n(t))$ which moves the strengths around.
The coordinates $(x_1,\ldots,x_n, \Gamma_1,\ldots,\Gamma_n)$ for point vortices have a direct analog for smooth coadjoint orbits.
%
%
%
%
Indeed, instead of $\omega_t$ as dynamic variable, we can use the map $\Phi_t \in \operatorname{Diff}_\mu(\Ss^2)$ together with the initial conditions $\omega_0$ via the equation
\begin{equation}\label{eq:transport_form}
    \dot\Phi_t = X_\psi\circ\Phi_t, \quad -\Delta\psi = \omega_0\circ\Phi_t^{-1}.
\end{equation}
%
%
This is the \emph{transport map formulation} of equation \eqref{eq:vorteq2}.
It is the basis for global well-posedness results (\emph{cf.}~\cite{Yu1963,MaPu2012}), which shows convergence of the fixed point iteration obtained by leap-frogging between the equations~\eqref{eq:transport_form} for $\Phi$ (Picard iterations for ODE) and for $\psi$ (stationary PDE).
Geometrically, the transport formulation captures that the symplectic leaves are generated by the action of the Lie group.
 
We have thus summarized the geometric description of the 2-D Euler equations on $\Ss^2$.
Indeed, the structures presented -- the Hamiltonian formulation, the coadjoint orbits, the conservation of Casimirs, and the transport formulation -- are all important in the various mathematical theories  for long-time behavior (\emph{cf.}~surveys by Drivas and Elgindi~\cite{DrEl2023} and Khesin, Misio{\l }ek, and Shnirelman~\cite{KhMiSh2022}).

\begin{remark}
    Arnold's~\cite{Ar1966} geometric description is valid for Euler's equations on Riemannian manifolds $M$ of arbitrary dimension, by taking the group $G$ to be volume preserving diffeomorphisms $\mathrm{Diff}_{\mathrm{vol}}(M)$ with the Lie algebra $\mathfrak{g}$ of divergence free vector fields.
    However, this setting fails to capture some geometric qualities of 2-D Euler.
    In particular, the action of $\mathrm{Diff}_{\mathrm{vol}}(M)$ on $M$ is not symplectic if $\operatorname{dim} M>2$, nor is the action on $C^\infty(M)$ a Poisson map, which, for example, imply that singular and smooth coadjoint orbits cannot be given as point vortices and vorticity functions.
    Instead, the geometrically natural generalization of 2-D Euler is the group of (Hamiltonian) symplectomorphisms of a symplectic manifold equipped with a Riemannian metric (e.g., a Kähler manifold).
    This setting captures all the geometry of the 2-D Euler equations and therefore many of the analysis results (e.g.,~global solutions~\cite{Eb2012}).
\end{remark}

\section{Quantization and the Euler--Zeitlin equation}\label{sec:quantization-and-EZ}

Here are the steps leading to Zeitlin's model on the sphere:
\begin{enumerate}
    \item The first ingredient is an explicit, finite-dimensional quantization of the Poisson algebra $(C^\infty(\Ss^2),\{\cdot,\cdot\})$. 
    That is, a sequence of linear maps $$T_N\colon C^\infty(\Ss^2)\to \mathfrak{u}(N)$$ such that the Poisson bracket is approximated by the matrix commutator in the spectral norm $\lVert\cdot\rVert_\infty$:
    \[
        \lVert T_N(\{\psi,\omega \}) - \frac{1}{\hbar} [T_N\psi,T_N\omega]\rVert_\infty \to 0
        \quad\text{as}\quad N\to \infty,
    \]
    where $\hbar \sim 1/N$.
    Notice that the convergence cannot be uniform in $\psi$ and $\omega$, since $C^\infty(\Ss^2)$ is infinite-dimensional.

    \item The projection maps $T_N$ should take the unit function $x\mapsto 1$ to the imaginary identity matrix $iI\in \mathfrak{u}(N)$.
    In particular, $T_N$ descends to a map between the quotient Lie algebras $C^\infty(\Ss^2)/\mathbb{R}$ and $\mathfrak{pu}(N) \equiv \mathfrak{u}(N)/i\mathbb{R}I$.\footnote{We refrain from identifying the Lie algebra $\mathfrak{pu}(N)$ with $\mathfrak{su}(N)$, since its Lie group $\mathrm{PU}(N)$ is different from $\mathrm{SU}(N)$.}

    \item The dual $\mathfrak{pu}(N)^*$ is naturally identified with $\mathfrak{su}(N)$ via the pairing $\langle W,P\rangle = -\operatorname{tr}(WP)$ for $P\in\mathfrak{pu}(N)$ and $W\in\mathfrak{su}(N)$.
    The corresponding $\operatorname{ad}^*$-operator on $\mathfrak{su}(N)$ is
    \[
        \operatorname{ad}^*_PW = \frac{1}{\hbar}[W,P].  
    \]

    \item The last ingredient is a ``quantized'' Laplace operator $\Delta_N\colon\mathfrak{u}(N)\to \mathfrak{su}(N)$.
    It should have the kernel $i\mathbb{R}I$ and descend to a bijective map $\mathfrak{pu}(N)\to \mathfrak{su}(N)$.
    Zeitlin's model is then the Lie--Poisson system on $\mathfrak{g}^* = \mathfrak{pu}(N)^*\simeq \mathfrak{su}(N)$, given by the isospectral flow
    \begin{equation}\label{eq:zeitlin_model}
        \dot W + \frac{1}{\hbar}[W,P] = 0, \qquad -\Delta_N P = W .
    \end{equation}
\end{enumerate}

\subsection{Quantization via representation}\label{sec:quant-via-repr}

To build the quantization maps $T_N$, it is natural to begin from the sphere's symmetry: the Lie group $\operatorname{SO}(3)$.
The connection to the space of functions $C^\infty(\Ss^2)$ is established via its Lie algebra $\mathfrak{so}(3)$ as follows.
Let $x_1,x_2,x_3\in C^\infty(\Ss^2)$ be a choice of Euclidean coordinate functions for the embedded sphere $\Ss^2 \subset \mathbb{R}^3$.
These functions generate $\mathfrak{so}(3)$ as a Lie subalgebra of the Poisson algebra $\mathfrak{g} = (C^\infty(\Ss^2),\{\cdot,\cdot\})$.
But we should think of it differently.
Namely, 
the basis elements $\mathbf{e}_1,\mathbf{e}_2,\mathbf{e}_3 \in \mathbb{R}^3\simeq \mathfrak{so}(3)$ are associated with 
first order differential operators $\mathcal X_\alpha$ on $C^\infty(\mathbb{S}^2)$ given by
\begin{equation*}
    \mathcal X_\alpha\omega = \{x_\alpha, \omega\}, \qquad \text{for}\; \alpha=1,2,3\, .
\end{equation*}
Then $\mathcal X_1,\mathcal X_2,\mathcal X_3$ provides a unitary representation of $\mathfrak{so}(3)$ on $C^\infty(\Ss^2)$ compatible with the Poisson algebra structure.
Indeed, the Jacobi identity yields $\mathcal X_\alpha \mathcal X_\beta - \mathcal X_\beta \mathcal X_\alpha = \{ \{x_\alpha, x_\beta\}, \cdot \}$ and each $\mathcal X_\alpha$ is skew self-adjoint with respect to the $L^2$ inner product.
We now apply the powerful machinery of Schur, Cartan, Weyl, and others (\emph{cf.}~\cite{Kn1996b}) for representations of compact groups and the connection to harmonic analysis.
First, the generators $\mathcal X_1,\mathcal X_2,\mathcal X_3$ can be viewed as the first order elements of the \emph{universal enveloping algebra} $U(\mathfrak{so}(3))$.
Higher order elements correspond to higher order differential operators.
In particular, the Killing form on $\mathfrak{so}(3)$ corresponds to the second order \emph{Casimir element} $\mathcal C = -\sum_\alpha \mathcal X_\alpha^2$, which in this case is the Laplacian $\Delta$ on $\mathbb{S}^2$.
The Casimir element commutes with each $\mathcal{X}_\alpha$.
In particular, if $V \subset C^\infty(\Ss^2)$ is an eigenspace of $\mathcal C=\Delta$ with eigenvalue $\lambda$, then $\mathcal{X}_1, \mathcal{X}_2, \mathcal{X}_3$ provide a sub-representation of $\mathfrak{so}(3)$ on $V$.
Indeed, $\mathcal C[\mathcal X_\alpha,\mathcal X_\beta]V = [\mathcal X_\alpha,\mathcal X_\beta] \mathcal C V = \lambda [\mathcal X_\alpha,\mathcal X_\beta] V$, so $[\mathcal X_\alpha,\mathcal X_\beta] V$ must belong to the eigenspace of $\lambda$, i.e., $[\mathcal X_\alpha,\mathcal X_\beta] V \subset V$.
One can further prove that this representation is irreducible.
In fact, the eigenspaces $V_\ell$ for $\lambda_\ell = -\ell(1+\ell),\ell=0,1,\ldots$ correspond to all the irreducible infinitesimal representations of $\mathrm{SO}(3)$ (i.e., the odd-dimensional representations of $\mathfrak{so}(3))$.
Furthermore, $V_\ell$ has dimension $2\ell+1$ and comes with a canonical basis $Y_{\ell m}$ (the spherical harmonics) corresponding to the \emph{weights} $m=-\ell,\ldots,\ell$ of the representation.

From the point of view just presented, the aim of quantization is to construct a representation of $\mathfrak{so}(3)$ on the Lie algebra $\mathfrak{u}(N)$, which should be `as large as possible': since $\mathfrak{u}(N)\simeq V_0\oplus \cdots\oplus V_{N-1}$ as vector spaces one can hope to capture all the irreducible representations  $V_\ell$ for $\ell=0,\ldots,N-1$.
By identifying the canonical basis $Y_{\ell m}$ in $\mathfrak{u}(N)$, we then obtain the projection $T_N\colon \mathfrak g \to \mathfrak{u}(N)$.
Following Hoppe and Yau~\cite{HoYa1998}, but focusing on geometry rather than formulae,
we now give a recipe for such a construction.

For $N$ even or odd, corresponding to $\ell'=(N-1)/2$, begin with an irreducible, unitary representation $\pi\colon\mathfrak{so}(3)\to \mathfrak{u}(N)$.
Let $S_\alpha = \pi(\mathbf{e}_\alpha)$, chosen so that $S_3$ is diagonal with sorted, equidistant eigenvalues $-\mathrm{i}\ell',\ldots,\mathrm{i}\ell'$ corresponding to the weights of the representation.
Since the representation is unitary, $S_1,S_2,S_3$ share the same spectrum.
Explicitly, $S_1$ and $S_2$ are the tridiagonal matrices given by
\begin{equation*}
    S_1 = \frac{\mathrm{i}}{2}\begin{pmatrix}
        0 &  &  & \\
         & \ddots & f(w)  &\\
        & f(w) & \ddots & \\
        &  &  & 0 \\
    \end{pmatrix}
    \quad\text{and}\quad
    S_2 = \frac{\mathrm{1}}{2}\begin{pmatrix}
        0 &  &  & \\
         & \ddots & f(w)  &\\
        & -f(w) & \ddots & \\
        &  &  & 0 \\
    \end{pmatrix}
\end{equation*}
for $w=-\ell',-\ell'+1,\ldots,\ell'-1$ and $f(w) = \sqrt{\ell'(\ell'+1)-w(w+1)}$.

The Casimir element restricted to the representation is just multiplication by $-\ell'(1+\ell')$ and is therefore given by the diagonal matrix $C = -\sum_{\alpha} S_\alpha^2 = -\ell'(1+\ell')I$.
We now introduce a scaled representation $X_\alpha = \hbar S_\alpha$ in such a way that the matrix $X_\alpha$ correspond to the function $x_\alpha$ on $\mathbb{S}^2$.
Thus, to achieve $\big(\sum_{\alpha} X_\alpha^2\big)^{1/2} = \mathrm{i}I$, corresponding to $\big(\sum_{\alpha} x_\alpha^2\big)^{1/2} = 1$, we take $\hbar = 1/\sqrt{\ell'(1+\ell')} = 2/\sqrt{N^2-1} \eqqcolon \hbar_N$.

From the representation on $\mathbb{C}^N\simeq V_{\ell'}$ we obtain another on $\mathfrak{u}(N)$: for $W\in\mathfrak{u}(N)$, the infinitesimal action of $\mathbf{e}_\alpha$ on $W$ is
\[
    \mathbf{e}_\alpha\cdot W = [S_\alpha, W] = \frac{1}{\hbar}[X_\alpha, W]. 
\]
The Casimir element for this representation is the \emph{Hoppe--Yau Laplacian} $$\Delta_N\colon \mathfrak{u}(N)\to\mathfrak{u}(N)$$ given by 
\[
    \Delta_N = \sum_{\alpha=1}^3 [S_\alpha, [S_\alpha, \cdot ]] = \frac{1}{\hbar^2}\sum_{\alpha=1}^3 [X_\alpha, [X_\alpha, \cdot ]].
\]
Being the Casimir element, any eigenvalue is of the form $\lambda = -\ell(1+\ell)$ for some $\ell \in \mathbb{N}/2$ and the corresponding eigenspace $V_\ell$ has dimension $2\ell + 1$.
From the Peter--Weyl theorem, the representation on $\mathfrak{u}(N)$ decomposes into irreducible, orthogonal components $\mathfrak{u}(N)\simeq V_{\ell_0}\oplus \cdots\oplus V_{\ell_{n-1}}$.
But which $\ell_k$ are included?

Since $\Delta_N \mathrm{i} I = 0$ and $\Delta_N X_\alpha = 2 X_\alpha$, we have $\ell_0 = 0$ and $\ell_1 = 1$.
From general theory, and as desired, $\ell_k = k$ and $n = N$:

\begin{lemma}\label{lem:decomposition}
    $\mathfrak{u}(N)\simeq V_0\oplus \cdots\oplus V_{N-1}$ where $V_\ell$ decomposes orthogonally as
    \begin{equation*}
        V_\ell =\bigoplus_{m=0}^{\ell} V_{\ell m}, \quad \text{with}\quad \operatorname{dim}V_{\ell m} = \left\{ \begin{matrix}
            1 \quad\text{if}\quad m=0 \, \phantom{.}\\
            2 \quad\text{if}\quad m\neq 0 \, . 
        \end{matrix} \right.
    \end{equation*}
    The components $V_{\ell m}$, corresponding to the weights $\pm 2 m$, are mapped to the $\pm m$th diagonals of the matrix in $\mathfrak{u}(N)$.
\end{lemma}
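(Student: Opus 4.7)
My plan is to identify the $\mathfrak{so}(3)$-representation on $\mathfrak{u}(N)$ with the adjoint action on $\operatorname{End}(\mathbb{C}^N)$ and then apply Clebsch--Gordan. Complexifying, we have $\mathfrak{gl}_N(\mathbb{C}) \simeq \mathbb{C}^N \otimes (\mathbb{C}^N)^{\ast} \simeq V_{\ell'} \otimes V_{\ell'}^{\ast}$, where $V_{\ell'}$ is the spin-$\ell'$ irrep carried by $\pi$. Each $\mathfrak{so}(3)$-irrep is self-dual, so the Clebsch--Gordan formula yields
\[
   V_{\ell'}\otimes V_{\ell'}^{\ast} \;\simeq\; \bigoplus_{\ell=0}^{2\ell'} V_\ell \;=\; \bigoplus_{\ell=0}^{N-1} V_\ell,
\]
with the correct total complex dimension $\sum_{\ell=0}^{N-1}(2\ell+1)=N^2$. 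All summands have integer spin and therefore admit a real structure; the involution $W\mapsto -W^{\ast}$ cuts out $\mathfrak{u}(N)\subset\mathfrak{gl}_N(\mathbb{C})$ and descends the decomposition to the real form claimed in the lemma. In particular this gives $\ell_k=k$ and $n=N$, consistent with the a priori information $\ell_0=0$ and $\ell_1=1$ coming from $\Delta_N\mathrm{i}I=0$ and $\Delta_N X_\alpha=2X_\alpha$.

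Second, I would pin down the refined weight structure by diagonalizing $\operatorname{ad}_{S_3}$ on matrix units. With $S_3=\mathrm{i}\operatorname{diag}(-\ell',-\ell'+1,\dots,\ell')$, a direct computation gives $[S_3,E_{jk}]=\mathrm{i}(j-k)E_{jk}$. Thus the weight-$m$ subspace of the adjoint representation is exactly the span of matrix units on the $m$-th diagonal $\{E_{jk}:j-k=m\}$, of complex dimension $N-|m|$. Since $V_\ell$, as a spin-$\ell$ irrep, carries the weights $-\ell,\dots,\ell$ each with multiplicity one, its intersection with the $m$-th diagonal is one-complex-dimensional for $|m|\le\ell$ and zero otherwise; summing over $\ell\ge|m|$ recovers $N-|m|$, so the dimension count closes up.

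Third, I would descend to the real form. Conjugation $W\mapsto -W^{\ast}$ interchanges the $+m$-th and $-m$-th diagonals and hence swaps the weight-$(+m)$ and weight-$(-m)$ components of each $V_\ell$. Consequently, for $m\ne 0$ the real form of the two-complex-dimensional span $V_\ell^{(+m)}\oplus V_\ell^{(-m)}$ is a two-real-dimensional subspace $V_{\ell m}\subset\mathfrak{u}(N)$ spread across the $\pm m$-th diagonals, while for $m=0$ the one-complex-dimensional weight-zero piece, being conjugation-stable, descends to a one-real-dimensional $V_{\ell 0}$ on the main diagonal. This matches the stated dimensions, and as a consistency check $\sum_{m=0}^{\ell}\dim V_{\ell m}=2\ell+1=\dim V_\ell$. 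The only genuinely substantive input is the Clebsch--Gordan decomposition itself, which is classical; the main care is in the bookkeeping between the complex representation theory, the real form $\mathfrak{u}(N)$, and the identification of abstract weight spaces with concrete matrix diagonals via the explicit calculation of $[S_3,E_{jk}]$.
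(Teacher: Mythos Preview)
Your argument is correct and takes a genuinely different route from the paper. The paper proceeds constructively: it realizes the diagonal matrices as polynomials $p(X_3)$, observes that $\Delta_N$ acts on this subalgebra as the Legendre Sturm--Liouville operator, and thereby exhibits an explicit highest-weight vector $P_\ell=p_\ell(X_3)$ in each $V_\ell$ for $\ell=0,\dots,N-1$; a dimension count then forces the decomposition, and the shift operators $X,Y$ of $\mathfrak{sl}(2,\mathbb{C})$ push $P_\ell$ onto the off-diagonals. You instead invoke Clebsch--Gordan on $V_{\ell'}\otimes V_{\ell'}^{\ast}$ to get the decomposition in one stroke, and read off the diagonal support directly from $[S_3,E_{jk}]=\mathrm{i}(j-k)E_{jk}$. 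Your approach is cleaner and more conceptual, requiring only standard representation theory rather than the Legendre--Sturm--Liouville detour; on the other hand, the paper's argument produces explicit eigenvectors $P_\ell$ on the main diagonal, which feeds directly into the subsequent explicit construction of $T_N$ via the tridiagonal spectral problem for $\Delta_N|_{V_{:m}}$.
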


\begin{proof}
    The result is well-known and follows from general theory about multiplicities in the Peter-Weyl decomposition, see for example Kirillov~\cite[ch.~4, particularly sec.~4.9]{Ki2008}.
    We give here a direct, less general proof.

    Consider the space $\mathrm{poly}_N = \{p\in \mathbb{C}[x]\mid \operatorname{deg}p =N\}$ of polynomials.
    A bijection between diagonal matrices $\operatorname{D}(N)$ and $\mathrm{poly}_N$ is given by $p \mapsto p(X_3)$.
    This is an algebra isomorphism, since $(pq)(X_3) = p(X_3)q(X_3)$.
    Let $p_\ell$ denote the Legendre polynomials and consider the corresponding matrices $P_\ell = p_\ell(X_3)$.
    One can readily check that $\Delta_N P_\ell$ is diagonal and corresponds to the polynomial
    \begin{equation*}
        \frac{d}{dx}(1-x^2)\frac{d}{dx} p_\ell(x).
    \end{equation*}
    From Sturm--Liouville theory it then follows that $\Delta_N P_\ell = -\ell(\ell + 1)P_\ell$.
    Thus, we must have that the representation on $\mathfrak{u}(N)$ includes $V_\ell$ for $\ell = 0,\ldots,N-1$.
    By a dimension count, it then follows that $\mathfrak{u}(N)\simeq V_0\oplus \cdots\oplus V_{N-1}$.
    To see that each $V_\ell$ decomposes as stated, one can introduce the ``shift operators'' $X$ and $Y$ obtained as part of the standard basis $H, X, Y$ in the complexification $\mathfrak{sl}(2,\mathbb{C})\simeq \mathfrak{su}(2)_\mathbb{C}$.
    Since $X$ and $Y$ has elements only on the first upper and lower diagonal respectively, it follows from representation theory for $\mathfrak{sl}(2,\mathbb{C})$ that $P_{\ell}\in V_\ell$ gives rise to $2\ell$ new basis vectors $P_{\ell m}\in V_\ell\otimes\mathbb{C}$ for $m=-\ell,\ldots,\ell$ with $P_{\ell 0 } = P_\ell$, each of which is supported on the $m$:th diagonal.
    Another dimension count then concludes that this basis spans $V_\ell\otimes \mathbb{C}$, so the corresponding real basis 
    \begin{equation*}
        \frac{1}{2}\left(P_{\ell m} - P_{\ell (-m)}^\dagger\right) \quad\text{and}\quad \frac{i}{2}\left(P_{\ell m} + P_{\ell (-m)}^\dagger\right)   \quad\text{spans $V_\ell$.} 
    \end{equation*}
    
    %
    %
\end{proof}

\begin{remark}
    For $N$ even, i.e., the $\ell' = (N-1)/2$ ``half-integer spin'' representation on $\mathbb{C}^N$, we still obtain only ``integer spin'' representations on $\mathfrak{u}(N)$.
    This is desirable, since the eigenspaces of the Laplacian on $\Ss^2$ correspond exactly to the integer spin (odd-dimensional) representations of $\mathfrak{so}(3)$.
\end{remark}

The decomposition in Lemma~\ref{lem:decomposition} enables explicit construction of the projection map $T_N\colon \mathfrak{g}\to \mathfrak{u}(N)$.
Indeed, the $\pm m$th diagonals of $W\in\mathfrak{u}(N)$ are spanned by $V_{:m} \coloneqq V_{m m}\oplus\cdots\oplus V_{(N-1)m}\simeq \mathbb{C}^{N-m}$.
In turn, $V_{:m}$ corresponds to the span of the spherical harmonic functions $Y_{mm},\ldots,Y_{(N-1)m}$.
Since $\Delta_N V_{:m} \subseteq V_{:m}$, we obtain the $\pm m$ diagonals by identifying the eigenvector of $\Delta_N|_{V_{:m}}$ with eigenvalue $-(m+k)(1+m+k)$ with the spherical harmonic function $Y_{(m+k)m}$.
Specifically, $\Delta_N|_{V_{:m}}\colon \mathbb{C}^{N-m}\to \mathbb{C}^{N-m}$, as an operator acting on the upper $m$th diagonal of $W$, is a tridiagonal, symmetric matrix.
The spectral decomposition of $\Delta_N|_{V_{:m}}$ can hence be computed by efficient numerical methods of complexity $\mathcal{O}(N)$.
This way, a function $\omega\in\mathfrak{g}$ is projected to the matrix $W=T_N \omega$ from its spherical harmonics coefficients $(\omega_{\ell m})_{\ell \leq N-1}$ in only $\mathcal{O}(N^2)$ numerical operations; see Cifani \emph{et al.}~\cite{CiViMo2023} for details. 
Notice that the $L^2$ adjoint $T_N^*\colon \mathfrak{u}(N)\to \mathfrak{g}$ is a right inverse of $T_N$.
%
%

We have thus arrived at the following:

\begin{theorem}[\cite{HoYa1998}]\label{prop:projectionTN}
    The operators $\Delta_N$ and $T_N$ fulfill
    \begin{itemize}
        \item $T_N\Delta\omega = \Delta_N T_N\omega$;
        \item $T_N 1 = \mathrm{i} I$;
        \item $T_N \{x_\alpha,\omega \} = \frac{1}{\hbar}[X_\alpha,T_N\omega]$.
    \end{itemize}
\end{theorem}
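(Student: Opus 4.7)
My plan is to verify the three identities in turn, each resting on the $\mathfrak{so}(3)$-equivariance built into the construction of $T_N$.

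I would dispatch $T_N 1 = \mathrm{i}I$ first, as it is essentially a matter of normalization. The constant function $1$ spans the trivial subrepresentation $V_0^{\mathrm{sph}} \subset C^\infty(\Ss^2)$, while by Lemma~\ref{lem:decomposition} the corresponding component $V_0 \subset \mathfrak{u}(N)$ is the one-dimensional center $\mathrm{i}\Rr I$. Hence $T_N 1 \in \mathrm{i}\Rr I$, with the specific scalar $\mathrm{i}$ fixed by the scaling $\hbar_N = 2/\sqrt{N^2-1}$ chosen precisely so that $\sum_\alpha X_\alpha^2 = -I$ matches the sphere constraint $\sum_\alpha x_\alpha^2 = 1$. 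Next I would address $T_N \Delta\omega = \Delta_N T_N\omega$: both $\Delta$ and $\Delta_N$ are the Casimir elements of the respective $\mathfrak{so}(3)$-representations, acting on $V_\ell^{\mathrm{sph}}$ and on $V_\ell$ by the same scalar $-\ell(\ell+1)$. Since the construction sends $V_\ell^{\mathrm{sph}}$ into $V_\ell$ for $\ell \leq N-1$ and annihilates higher modes, both $T_N\Delta$ and $\Delta_N T_N$ act as multiplication by $-\ell(\ell+1)$ on $V_\ell^{\mathrm{sph}}$, and the identity follows.

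The third identity, $T_N\{x_\alpha,\omega\} = \tfrac{1}{\hbar}[X_\alpha, T_N\omega]$, is the substantive one: it asserts that $T_N$ intertwines the $\mathfrak{so}(3)$-representations $\alpha \mapsto \{x_\alpha,\cdot\}$ on $C^\infty(\Ss^2)$ and $\alpha \mapsto [S_\alpha,\cdot]$ on $\mathfrak{u}(N)$. I would exploit that both decompose into isomorphic irreducible components of dimension $2\ell+1$ (Peter--Weyl on the sphere side; Lemma~\ref{lem:decomposition} on the matrix side), matched by the construction as $V_\ell^{\mathrm{sph}} \to V_\ell$ for $\ell \leq N-1$. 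By Schur's lemma, the intertwiner between two such irreducibles is unique up to a scalar, and that scalar does not affect the equivariance identity (both sides scale identically). It thus suffices to verify that $T_N|_{V_\ell^{\mathrm{sph}}}$ is $\mathfrak{so}(3)$-equivariant. By construction it sends the $\mathcal{X}_3$-weight-$m$ vector $Y_{\ell m}$ to a $[S_3,\cdot]$-weight-$m$ vector inside $V_\ell$ (on the $m$-diagonal, by the last sentence of Lemma~\ref{lem:decomposition}), so weight spaces are preserved.

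The main obstacle is confirming that this weight-space-preserving identification is genuinely an $\mathfrak{so}(3)$-intertwiner rather than a weight-diagonal map with $m$-dependent coefficients. The cleanest route I see is to exploit that both weight bases are orthonormal: the $Y_{\ell m}$ in $L^2(\Ss^2)$, and the matrix weight vectors in the Hilbert--Schmidt inner product, since they are eigenvectors of the real symmetric tridiagonal operator $\Delta_N|_{V_{:m}}$. Orthonormality pins the raising and lowering operator coefficients to the universal Clebsch--Gordan values up to signs, which the construction fixes consistently across $m$ for each $\ell$. An arguably more conceptual route is to characterize $T_N$ abstractly as the unique $\mathfrak{so}(3)$-equivariant linear map agreeing with $x_\alpha \mapsto X_\alpha$ on $V_1^{\mathrm{sph}}$ and annihilating $V_\ell^{\mathrm{sph}}$ for $\ell \geq N$; from that vantage the third identity is automatic, and the work reduces to verifying that the explicit Laplacian-based construction coincides with this abstract map.
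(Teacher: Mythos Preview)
The paper does not supply a formal proof of this theorem; it is cited from Hoppe--Yau, and the text preceding the statement (the representation-theoretic construction of $\Delta_N$ and $T_N$, together with Lemma~\ref{lem:decomposition}) is offered as the justification. Your treatment of the first two items matches that implicit argument exactly: the Casimir acts by the same scalar $-\ell(\ell+1)$ on both sides, and the $\ell=0$ components are one-dimensional.

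For the third item you correctly isolate the real content and the real difficulty. One small correction: the specific scalar in $T_N 1 = \mathrm{i}I$ is not forced by the choice of $\hbar_N$ (that normalization concerns $V_1$, not $V_0$); it is a separate convention. More substantively, your invocation of Schur's lemma is slightly misplaced---Schur tells you intertwiners are unique up to scalar \emph{once you know you have an intertwiner}, but does not by itself establish equivariance of a given weight-preserving map. You recognize this and propose two remedies. The paper's implicit route is essentially your second one: in the proof of Lemma~\ref{lem:decomposition}, the matrix basis $P_{\ell m}$ is generated from $P_\ell = p_\ell(X_3)$ by applying the $\mathfrak{sl}(2,\Cc)$ shift operators, exactly mirroring how $Y_{\ell m}$ is generated from $Y_{\ell 0}$ on the function side. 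So the identification $Y_{\ell m} \mapsto P_{\ell m}$ is equivariant by construction, and the subsequent eigenvector description of $T_N$ is just a computationally convenient restatement. Your first route (orthonormality pinning the ladder coefficients) would also work but is more laborious; the raising/lowering construction already hands you equivariance for free.
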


This result achieves the steps 2--4 listed above. Moreover,
(i) the approximation $\Delta_N$ to the Laplacian on $\Ss^2$ is exact on the subspace spanned by spherical harmonics up to order $\ell = N-1$, and (ii) the $\mathrm{SO}(3)$-symmetry of $\Ss^2$ is preserved, since $\mathfrak{so}(3)$ is represented analogously on $\mathfrak{u}(N)$ and $C^\infty(\Ss^2)$.

\begin{remark}
%
%
Zeitlin's original model on the flat torus $\Tt^2$ does
not preserve the translational symmetry generated by the action of the Abelian group $A=\Rr^2$ on $\Rr^2/\Zz^2\simeq \Tt^2$.
The problem is that there is no corresponding action of $A$ on the finite dimensional Lie algebra $\mathfrak{g} = \mathfrak{su}(N)$.
Indeed, whereas there are actions of $\mathbb{R}$ on $\mathfrak{su}(N)$ corresponding to $(a_1,0)\in A$ and $(0,a_2)\in A$, these actions fail to commute.
The numerical consequence is that translation-like motion in Zeitlin's model on $\Tt$ give rise to artifacts and \new{the Gibbs phenomenon}.
\end{remark}

The remaining step (1) above, relating the Poisson bracket $\{\cdot,\cdot\}$ to the scaled commutator $\frac{1}{\hbar}[\cdot,\cdot]$, is at the heart of quantization theory.
The following is a specialization to our setting of a result by Charles and Polterovich~\cite{ChPo2018}.

\begin{theorem}[\cite{ChPo2018}]\label{thm:quantization_limit}
    There exists $\alpha>0$ so for all $\omega\in C^2(\Ss^2)$
    \begin{equation*}
        \lVert \omega \rVert_{L^\infty} - \alpha \hbar \lVert \omega\rVert_{C^2} \leq \sqrt{\frac{N-1}{N+1}} \lVert T_N\omega \rVert_{\infty} \leq \lVert \omega\rVert_{L^\infty},
    \end{equation*}
    where $\lVert W\rVert_\infty = \sup_{\mathbf{u}} \frac{\lvert W\mathbf{u}\rvert}{\lvert\mathbf{u}\rvert}$.
    Further, there exists $\beta>0$ so for all $\omega,\psi\in C^3(\Ss^2)$
    \begin{equation*}
        \lVert \frac{1}{\hbar}[T_N\omega,T_N\psi] - T_N \{\omega,\psi\} \rVert_\infty \leq \beta\hbar \sum_{k=1}^3\lVert\omega\rVert_{C^{k}}\lVert\psi\rVert_{C^{4-k}}.
    \end{equation*}
\end{theorem}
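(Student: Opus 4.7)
The plan is to treat the statement as a quantitative Berezin--Toeplitz correspondence principle on $\Ss^2$, and to attack it through the $\mathrm{SU}(2)$ coherent state formalism. I would first make the identification: the Hoppe--Yau map $T_N$ coincides, up to the overall scalar $\mathrm{i}$ forced by $T_N 1 = \mathrm{i} I$, with the standard Berezin--Toeplitz quantization on $\mathbb{CP}^1\simeq\Ss^2$ built from the irreducible representation on $\mathbb{C}^N\simeq V_{\ell'}$, $\ell'=(N-1)/2$. The $\mathrm{SO}(3)$-equivariance in Theorem~\ref{prop:projectionTN} together with normalization on the constant function pins this identification down. Writing $T_N\omega = \mathrm{i}\,\hat T_\omega$, the companion object $\hat T_\omega$ is the usual Hermitian (for real $\omega$) Toeplitz operator, admitting the diagonal coherent-state representation $\hat T_\omega = \tfrac{N}{4\pi}\int_{\Ss^2}\omega(z)\,|z\rangle\langle z|\,\mu(z)$ with $\hat T_1 = I$.

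The upper bound $\lVert T_N\omega\rVert_\infty \leq \lVert\omega\rVert_{L^\infty}$ then follows immediately from this positive-operator-valued integral: for any unit $v\in\mathbb{C}^N$,
\[
    \bigl|\langle v,\hat T_\omega v\rangle\bigr| = \Bigl|\tfrac{N}{4\pi}\int_{\Ss^2}\omega(z)\,|\langle v|z\rangle|^2\,\mu(z)\Bigr| \leq \lVert\omega\rVert_{L^\infty},
\]
since $\tfrac{N}{4\pi}|\langle v|z\rangle|^2\mu(z)$ is a probability measure and $\hat T_\omega$ is Hermitian. For the lower bound, pick $z^\star$ with $|\omega(z^\star)|=\lVert\omega\rVert_{L^\infty}$ and study the Berezin symbol $\sigma_N(\omega)(z^\star) := \langle z^\star,\hat T_\omega z^\star\rangle$. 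The spin-coherent kernel $|\langle z^\star|z\rangle|^2$ is peaked at $z^\star$ at scale $\sqrt{\hbar}$, is rotation-invariant about $z^\star$, has vanishing first moment and second moment of order $\hbar$. A second-order Taylor expansion of $\omega$ at $z^\star$ thus gives $\sigma_N(\omega)(z^\star) = \omega(z^\star) + O(\hbar\lVert\omega\rVert_{C^2})$, so that
\[
    \lVert T_N\omega\rVert_\infty \geq \bigl|\sigma_N(\omega)(z^\star)\bigr| \geq \lVert\omega\rVert_{L^\infty} - \alpha\hbar\lVert\omega\rVert_{C^2}.
\]

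For the commutator estimate I would invoke the Bordemann--Meinrenken--Schlichenmaier product expansion in effective form: there exists $C>0$ with
\[
    \hat T_\omega \hat T_\psi = \hat T_{\omega\psi} - \tfrac{\mathrm{i}\hbar}{2}\hat T_{\{\omega,\psi\}} + R_N(\omega,\psi), \quad \lVert R_N(\omega,\psi)\rVert_\infty \leq C\hbar^{2}\sum_{k=1}^{3}\lVert\omega\rVert_{C^k}\lVert\psi\rVert_{C^{4-k}}.
\]
Antisymmetrizing $(\omega,\psi)\mapsto(\psi,\omega)$ cancels the symmetric term $\hat T_{\omega\psi}$ and doubles the bracket, giving $[\hat T_\omega,\hat T_\psi] = -\mathrm{i}\hbar\,\hat T_{\{\omega,\psi\}} + R_N - R_N^{\mathrm{sw}}$. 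Transporting through $T_N = \mathrm{i}\hat T$, so that $[T_N\omega,T_N\psi] = -[\hat T_\omega,\hat T_\psi]$ and $T_N\{\omega,\psi\} = \mathrm{i}\hat T_{\{\omega,\psi\}}$, one obtains $[T_N\omega,T_N\psi] = \hbar\,T_N\{\omega,\psi\} - (R_N - R_N^{\mathrm{sw}})$, and dividing by $\hbar$ yields the stated inequality with $\beta = 2C$.

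The substantive obstacle is the effective product expansion itself: controlling the remainder by \emph{mixed} $C^k$ norms on both symbols, rather than by a single combined seminorm, is delicate and requires a two-variable stationary phase analysis of the iterated integral representing $\hat T_\omega \hat T_\psi$, carefully tracking how many derivatives of $\omega$ versus $\psi$ are produced at each asymptotic order. This is exactly the quantitative refinement established by Charles and Polterovich; since $\mathbb{CP}^1$ trivially meets their K\"ahler positivity hypotheses, I would verify that our normalization conventions match theirs and then cite their theorem rather than reproduce the stationary-phase computation.
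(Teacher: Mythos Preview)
The paper does not supply its own proof of this theorem: it is stated as a specialization of a result of Charles and Polterovich~\cite{ChPo2018} and left without argument. Your proposal is therefore not competing against any proof in the paper; it is a correct outline of the Berezin--Toeplitz/coherent-state argument that underlies the cited reference, and your final move---deferring the effective product-expansion remainder bound to~\cite{ChPo2018}---is exactly what the paper does wholesale.
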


\begin{proof}
    The result was proved for Berezin--Toeplitz operators between holomorphic sections of certain complex holomorphic vector bundles appearing in geometric quantization.
    Up to scaling, the Berezin--Toeplitz quantization is equivalent to the quantization $T_N$ discussed here, as explained in Appendix~\ref{appendix:line_bundles} below.
\end{proof}        

Via a bracket convergence result in the $L^2$ norm (see Lemma~\ref{thm:La-conv} of the appendix), instead of the spectral norm as in Theorem~\ref{thm:quantization_limit}, we obtain convergence of solutions.
\begin{theorem}\label{thm:convergence}
    Let $\omega(t)$ be a solution of \eqref{eq:vorteq2}, and $W(t)$ the solution of \eqref{eq:zeitlin_model} with $W(0) = T_N \omega(0)$. Then, for $t$ fixed, $\lVert T_N^*W(t) - \omega(t) \rVert_{L^2} \to 0$ as $N\to\infty$. 
\end{theorem}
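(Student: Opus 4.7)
The plan is to estimate the matrix error $E_N(t) := T_N\omega(t) - W(t)$ in Frobenius norm via a Grönwall argument, and then transfer the bound to $L^2(\Ss^2)$ through the adjoint $T_N^*$. Set $\tilde W(t) := T_N\omega(t)$ and $\tilde P(t) := T_N\psi(t)$; by Theorem~\ref{prop:projectionTN} these satisfy $\Delta_N\tilde P = \tilde W$, exactly mirroring $\Delta_N P = W$ in Zeitlin's system. Applying $T_N$ to \eqref{eq:vorteq2} and subtracting \eqref{eq:zeitlin_model} gives
\[
\dot E_N = A_N(t) - \tfrac{1}{\hbar}[\tilde P, E_N] - \tfrac{1}{\hbar}[\Delta_N^{-1}E_N, \tilde W] + \tfrac{1}{\hbar}[\Delta_N^{-1}E_N, E_N],
\]
where the bracket defect $A_N(t) := \tfrac{1}{\hbar}[T_N\psi(t), T_N\omega(t)] - T_N\{\psi(t),\omega(t)\}$ is controlled in Frobenius norm by Lemma~\ref{thm:La-conv}, and $E_N(0)=0$ by construction.

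Next, differentiate $\lVert E_N\rVert_F^2$. Ad-invariance of the trace pairing kills $\langle E_N, [\tilde P, E_N]\rangle_F$, and the identity $\operatorname{tr}(X[Y,X])=0$ — the matrix analog of enstrophy conservation along the isospectral flow, applied here to $E_N$ itself — kills $\langle E_N, [\Delta_N^{-1}E_N, E_N]\rangle_F$. What remains is
\[
\tfrac{d}{dt}\lVert E_N\rVert_F^2 = 2\langle E_N, A_N\rangle_F - \tfrac{2}{\hbar}\langle E_N, [\Delta_N^{-1}E_N, \tilde W]\rangle_F.
\]
Granting a uniform-in-$N$ bound $\tfrac{1}{\hbar}\lvert\langle E_N, [\Delta_N^{-1}E_N, \tilde W]\rangle_F\rvert \le C(t)\lVert E_N\rVert_F^2$, Grönwall together with $E_N(0)=0$ and $\lVert A_N(t)\rVert_F \to 0$ forces $\lVert E_N(t)\rVert_F \to 0$. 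The conclusion then follows from the splitting
\[
\lVert T_N^*W(t) - \omega(t)\rVert_{L^2} \le \lVert T_N^*E_N(t)\rVert_{L^2} + \lVert T_N^*T_N\omega(t) - \omega(t)\rVert_{L^2},
\]
where the first summand is bounded by a normalization constant times $\lVert E_N(t)\rVert_F$ (using that $T_N^*$ does not increase $L^2$ norms), and the second is the tail of the spherical harmonic expansion of $\omega(t)$ truncated at degree $N-1$, which vanishes as $N\to\infty$.

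The main obstacle is the nonlinear coupling: the prefactor $1/\hbar\sim N$ is diverging, so a direct commutator estimate $\lVert[\Delta_N^{-1}E_N, \tilde W]\rVert_F \le 2\lVert\Delta_N^{-1}E_N\rVert_F\lVert\tilde W\rVert_\infty$ only yields a useless $O(N)\lVert E_N\rVert_F^2$. The right viewpoint is that $\tfrac{1}{\hbar}[\,\cdot\,, \tilde W]$ behaves in the limit as the derivation $\{\,\cdot\,, \omega(t)\}$, gaining one degree of regularity on smooth $\omega(t)$. Quantitatively, insert a bracket defect
\[
\tfrac{1}{\hbar}[\Delta_N^{-1}E_N, \tilde W] = T_N\{T_N^*\Delta_N^{-1}E_N, \omega(t)\} + R_N,
\]
bound the first right-hand term by the continuous Biot--Savart-type inequality $\lVert\{\Delta^{-1}f, \omega\}\rVert_{L^2} \le C\lVert f\rVert_{L^2}\lVert\omega\rVert_{W^{1,\infty}}$, and absorb the remainder $R_N$ by a second application of Lemma~\ref{thm:La-conv}. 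This argument requires $\omega(t)$ to have (say) $C^3$ regularity, which is propagated along the Euler flow on $\Ss^2$ with $t$-dependent constants — sufficient for the stated fixed-$t$ convergence. This is precisely the step where the $L^2$-type bracket convergence is essential: the spectral-norm estimate of Theorem~\ref{thm:quantization_limit} is too weak to absorb the diverging $1/\hbar$ against only $\lVert E_N\rVert_F$.
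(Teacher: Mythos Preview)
Your proposal is correct and matches the paper's proof (Theorem~\ref{thm:space-time-convergence-detailed} in the appendix): the same Gr\"onwall structure, the same bi-invariance cancellation $\langle E_N,[P,E_N]\rangle=0$ (which the paper uses in one stroke rather than your two), and the same handling of the coupling term $\tfrac{1}{\hbar}[\Delta_N^{-1}E_N,\tilde W]$ by splitting it into a continuous Poisson-bracket piece bounded via Proposition~\ref{prop:bracket_SH} (your Biot--Savart inequality) plus a second bracket defect bounded via Lemma~\ref{thm:La-conv}, taking the $o(1)\lVert f\rVert_0$ branch of the minimum so that only $\lVert E_N\rVert$ appears. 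The only cosmetic difference is that the paper runs the estimate on the function side, differentiating $\lVert T_N^*W-\omega\rVert_{L^2}^2$ directly rather than $\lVert E_N\rVert_F^2$; the required regularity is $\omega(0)\in H^{3+\varepsilon}$, consistent with your $C^3$ heuristic.
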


\begin{proof}
    See Appendix~\ref{appendix:convergence_proof}, where we also give a more detailed formulation of the theorem (see Theorem~\ref{thm:space-time-convergence-detailed}), including convergence rates.
\end{proof}

\begin{remark}
    The result in Theorem~\ref{thm:convergence} (and Theorem~\ref{thm:space-time-convergence-detailed}) is analogous to the $L^2$ convergence result of Gallagher~\cite{Ga2002} for Zeitlin's original model on the flat torus~$\mathbb{T}^2$.
    But there are technical differences in the proof: Gallagher's result rely on algebraic multiplication properties of the Fourier basis in $\mathbb{T}^2$, not available on $\Ss^2$.
\end{remark}

In summary, we have a correspondence between functions and matrices that allows transcription of ideas and results from 2-D hydrodynamics to matrix theory.
The dictionary in Table~\ref{tbl:dictionary} contains the most direct examples.

\begin{table}
    \centering
    \renewcommand{\arraystretch}{1.5}
    \begin{tabular}{lcc} \toprule
      & \emph{2-D hydrodynamics} & \emph{matrix hydrodynamics} \\ \midrule
    \emph{Lie group} & $\Phi\in \mathrm{Diff}_\mu(\Ss^2)$ & $F\in\mathrm{PU}(N)$ \\
    \emph{Lie algebra} & $\psi \in C^\infty(\Ss^2)/\mathbb{R}, \;\{\cdot,\cdot\}$ & $P \in \mathfrak{pu}(N), \;\frac{1}{\hbar}[\cdot,\cdot]$ \\
    \emph{Phase space} & $\omega \in C^\infty_0(\Ss^2)$ & $W \in \mathfrak{su}(N)$ \\
    $\mathrm{Ad}^*$ & $\omega\circ\Phi^{-1}$ & $F W F^{-1}$ \\
    $\mathrm{ad}^*$ & $\{\omega,\psi \}$ & $\frac{1}{\hbar}[W,P]$ \\
    \emph{Strong norm} & $\lVert \omega\rVert_{L^\infty} = \operatorname{ess~sup} \omega$ & $\lVert W\rVert_\infty = \sup_{\lvert \boldsymbol{v}\rvert =1} \lvert W\boldsymbol{v}\rvert$ \\
    \emph{Inner product} & $\displaystyle\langle \omega,\omega\rangle_{L^2} = \int_{\Ss^2}\omega^2\mu$ & $\displaystyle \langle W,W\rangle_2 = -\frac{4\pi}{N}\operatorname{tr}(W^2)$ \\ 
    \emph{Casimirs} & $\displaystyle C_f(\omega) = \int_{\Ss^2}f(\omega)\mu$ & $\displaystyle C_f^N(W) = \frac{4\pi}{N}\operatorname{tr}(f(-\mathrm{i}W))$ \\ 
    \emph{Laplacian} & $\displaystyle\Delta \psi = \sum_{\alpha=1}^3 \{ x_\alpha,\{x_\alpha,\psi\}\}$ & $\displaystyle\Delta_N P = \frac{1}{\hbar^2}\sum_{\alpha=1}^3 [ X_\alpha,[X_\alpha,P]]$\\
    \emph{Hamiltonian} & $H(\omega) = -\frac{1}{2}\langle \omega,\Delta^{-1}\omega\rangle_{L^2}$ & 
    $H_N(W) = \frac{2\pi}{N}\operatorname{tr}(W\Delta_N^{-1}W)$
    \\
    \bottomrule
    \end{tabular}
    \caption{Dictionary between 2-D and matrix hydrodynamics.}\label{tbl:dictionary}
\end{table}



%
%


%
%
%
%
%

\section{Coadjoint orbits and their closure}\label{sec:coadjoint-orbits}

For initial vorticity $\omega_0$, consider the orbit $\{S_t(\omega_0)\mid t\geq 0 \}$, where $S_t$ is the flow map for the vorticity equation~\eqref{eq:vorteq2}.
Šverák~\cite{Sv2011} conjectured that orbits for generic $\omega_0\in L^\infty(\Ss^2)$ are not $L^2$ precompact, which can be interpreted as a mathematical formulation of the ``forward enstrophy cascade'' in 2-D turbulence (\emph{cf.}~Kraichnan~\cite{Kr1967}).
Of course, the orbits in Zeitlin's model are always precompact, since they evolve on a finite dimensional sphere.
Nevertheless, simulations with increasing $N$ support Šverák's conjecture.

In Arnold's geometric viewpoint, it follows from equation \eqref{eq:transport_form} that smooth solutions $\omega(t)\in C^\infty(\Ss^2)$ remain on the \emph{coadjoint orbit} $\mathcal{O}(\omega_0) = \{\omega_0\circ\Phi \mid \Phi\in\operatorname{Diff}_\mu(\Ss^2) \}$.
A first step in understanding $\{S_t(\omega_0)\mid t\geq 0 \}$ is thus to characterize $\mathcal{O}(\omega_0)$, a problem solved by
Izosimov, Khesin, and Mousavi~\cite{IzKhMo2016} via the Reeb graph of $\omega_0$ equipped with a measure reflecting the ``density'' of the level sets (see Fig.~\ref{fig:reeb_graph}a).

\begin{theorem}[\cite{IzKhMo2016}]\label{thm:coadjoint_reeb}
    Let $\omega,\psi\in C^\infty(\Ss^2)$ be simple Morse functions (distinct critical values).
    Then $\psi \in \mathcal{O}(\omega)$ if and only if $\omega$ and $\psi$ have the same measured Reeb graph.
\end{theorem}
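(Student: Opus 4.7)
The plan is to handle the two directions separately. The forward implication is essentially unpacking the definitions; the substance is in the reverse direction, where a global area-preserving diffeomorphism must be constructed from purely graph-theoretic data.

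For the forward direction, suppose $\psi = \omega\circ\Phi$ with $\Phi\in\operatorname{Diff}_\mu(\Ss^2)$. Then $\Phi$ maps each connected component of $\{\psi=c\}$ bijectively to a component of $\{\omega=c\}$, inducing an isomorphism $R_\psi \to R_\omega$ of Reeb graphs that intertwines the height functions (because $\psi = \omega\circ\Phi$). Since $\Phi^*\mu = \mu$, the push-forwards by $\psi$ and $\omega$ of the area form, restricted to each pair of corresponding cylinders, coincide; hence the edge measures agree.

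For the reverse direction, fix an isomorphism $\iota\colon R_\psi \to R_\omega$ preserving height and edge measures. Each open edge $e\in R_\psi$ corresponds to an open cylinder $C_e\subset\Ss^2$ foliated by regular level circles of $\psi$; since $\Ss^2$ is simply connected, $R_\psi$ is a tree and each $C_e$ is topologically a standard cylinder. By a Moser-type argument, one can pick action-angle coordinates $(A,\theta)\in I_e\times\Ss^1$ on $C_e$ in which $\mu|_{C_e}=dA\wedge d\theta$ and $\psi|_{C_e}=f_e(A)$ for a strictly monotone $f_e$; similarly obtain $\omega|_{C_{\iota(e)}}=g_e(A)$. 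The inverse of $f_e$ is precisely the cumulative distribution of the edge measure of $e$ with respect to the height function, and likewise for $g_e$. The hypothesis that $\iota$ preserves both height and measure therefore forces $f_e = g_e$, and the identity map in these coordinates defines a symplectomorphism $\Phi_e\colon C_e \to C_{\iota(e)}$ with $\omega\circ\Phi_e=\psi$.

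The main obstacle is gluing these local pieces into a globally smooth area-preserving diffeomorphism across the critical levels, where the cylinder structure degenerates. At an extremum one recovers polar-type area-preserving coordinates and fills in the closing disk by a standard normal-form argument. At a saddle $p$, the critical level is a figure-eight separating three adjacent edges, and one must match the area-preserving Morse normal form $\omega = \omega(p) + \tfrac12(x^2-y^2)$ on both sides. The angle parametrizations obtained on the three adjacent cylinders are free up to independent $\Ss^1$-rotations, and I would use this freedom to align the local normal forms along the figure-eight; the matching succeeds because the three cylinder cumulative-area functions, together with the saddle value, pin down the local area-preserving germ at $p$ up to exactly this rotational freedom. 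A final Moser-type smoothing in a neighborhood of each critical level then yields a $C^\infty$ global $\Phi\in\operatorname{Diff}_\mu(\Ss^2)$ with $\omega\circ\Phi=\psi$.
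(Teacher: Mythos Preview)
The paper does not contain a proof of this theorem; it is simply stated with a citation to Izosimov, Khesin, and Mousavi~\cite{IzKhMo2016}, so there is no in-paper argument to compare against.

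That said, your sketch follows the strategy of the cited reference: the forward direction is immediate, and the reverse is built edge-by-edge via symplectic (action--angle) coordinates on the regular cylinders, followed by gluing across critical levels. Your identification of the cumulative edge measure with the inverse of the height function on a cylinder is the right observation that makes the local matching work. The genuinely delicate step, as you note, is the smooth gluing at saddles; in the original paper this is handled not merely by the rotational freedom you describe, but by showing that the full symplectic germ at a simple Morse saddle is determined by the measured Reeb data on the three incident edges together with certain asymptotic expansions of the edge measures near the vertex. Your sketch gestures at this but does not supply the invariant that controls the germ, so as written it is a plausible outline rather than a complete proof.
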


However, while solutions with smooth initial data remain smooth, the $C^k$-norms for $k\geq 1$ grow fast as the vorticity level-curves quickly become extremely entangled.
This mechanism results in a Reeb graph that is well-defined but futile to track at moderate scales.
It is therefore natural to study the closure of coadjoint orbits in a weaker topology, for instance $L^\infty$ as Yudovich' existence theory suggests.
But the Reeb graph is not stable in $L^\infty$ since it is based on critical values and Morse functions that require $C^2$ topology (see Fig.~\ref{fig:Linf_reeb_breakdown}).
Instead, what persists is the \emph{level-set measure} $\lambda_\omega$, obtained by ``flattening'' the Reeb graph as follows.
Recall the Casimir invariants $C_f(\omega)$.
The mapping $C^\infty(\mathbb{R}) \ni f\mapsto C_f(\omega)$ is a distribution with compact support,
and $\lambda_\omega$ is the corresponding positive Radon measure on $\mathbb{R}$ (see Fig.~\ref{fig:reeb_graph}b). 
The $L^\infty$ closure of a coadjoint orbit then fulfills $\overline{\mathcal{O}(\omega_0)} \subseteq \{ \omega \in L^\infty(\Ss^2)\mid \lambda_\omega = \lambda_{\omega_0}\}$.
(The latter set is characterized in Sec.~\ref{sub:BFR} below.)

\begin{figure}
    \centering
    \includegraphics[width=.4\linewidth]{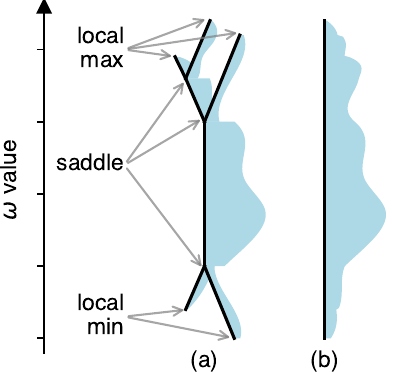}
    \caption{\textbf{(a)} Measured Reeb graph for a function $\omega$ on $\Ss^2$ with three local maxima and two local minima.
    Another function $\psi$ has the same measured Reeb graph if and only if there exists $\Phi\in\operatorname{Diff}_\mu(\Ss^2)$ such that $\psi = \omega\circ\Phi$ (\emph{cf.}~Theorem~\ref{thm:coadjoint_reeb}).\\
    \textbf{(b)} Level-set measure $\lambda_\omega$. It is a flattening, via horizontal projection, of the Reeb graph.
    The measure $\lambda_\omega(I)$ of an interval $I$ is the area of the set $\{x\in \Ss^2\mid \omega(x)\in I \}$.
    }
    \label{fig:reeb_graph}
\end{figure}

\begin{figure}
    \centering
    \begin{subfigure}{0.32\textwidth}
        \centering
        \includegraphics[height=.32\textwidth]{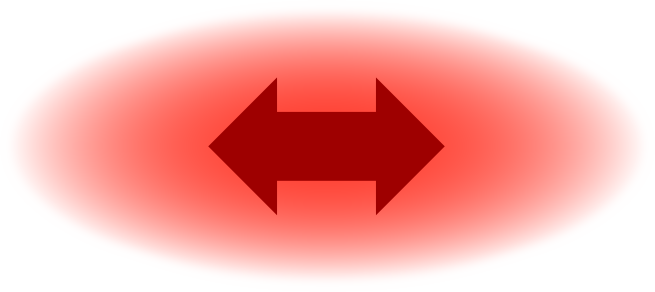}
        \caption{$\omega_0$}
        \label{subfig:Linf_1}
    \end{subfigure}
    \begin{subfigure}{0.32\textwidth}
        \centering
        \includegraphics[height=.32\textwidth]{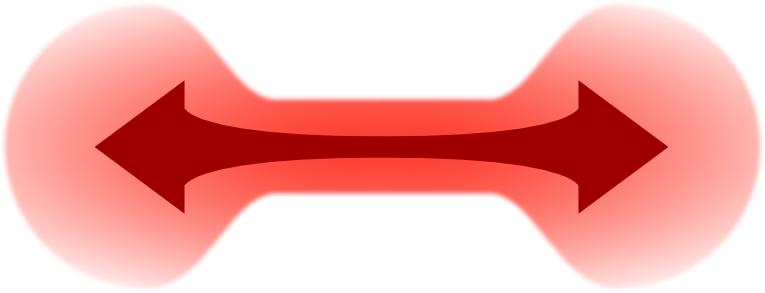}
        \caption{$\omega_0\circ\Phi_k^{-1}$}
        \label{subfig:Linf_2}
    \end{subfigure}
    \begin{subfigure}{0.32\textwidth}
        \centering
        \includegraphics[height=.32\textwidth]{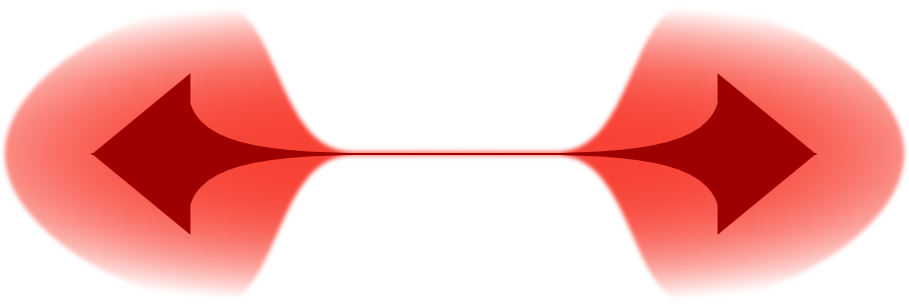}
        \caption{$\lim_{k\to\infty}\omega_0\circ\Phi_k^{-1}$}
        \label{subfig:Linf_3}
    \end{subfigure}
    \caption{Given a vortex configuration $\omega_0$ with one blob, it is possible to find a sequence $\Phi_k$ of area-preserving diffeomorphisms which transports it $\omega_0\circ\Phi_k^{-1}$ such that in the limit $k\to\infty$ it is $L^\infty$-indistinguishable from a configuration with two blobs.
    Thus, the Reeb graph in Theorem~\ref{thm:coadjoint_reeb} is not stable under the $L^\infty$ closure of $\mathcal{O}(\omega_0)$.
    }
    \label{fig:Linf_reeb_breakdown}
\end{figure}

But the $L^\infty$ topology is still too strong: it does not permit \emph{vortex mixing}.
If $\int_{\Ss^2}\omega_0 = 0$, one can imagine a sequence $\Phi_k$ of increasingly entangling diffeomorphisms for which $\lim_{k\to\infty}\int_{\Omega}\omega_0\circ\Phi_k = 0$ over any open subset $\Omega\subset\Ss^2$.
Yet, however complicated the sequence is, we always have $\lVert \omega_0\circ\Phi_k\rVert_{L^\infty} = \lVert \omega_0\rVert_{L^\infty}$.
Shnirelman~\cite{Sh1993} suggested instead the $L^\infty$ weak-$*$ topology, for which $\omega_0\circ\Phi_k \overset{\ast}{\rightharpoonup} 0$ is possible. 
Furthermore, for a generic $\omega_0 \in L^\infty(\Ss^2)$, the weak-$*$ closure $\overline{\mathcal{O}(\omega_0)}^*$ 
is convex and can be explicitly characterized~\cite{Sv2011, DoDr2022a}.
Importantly, the Casimirs $C_f$ are not weak-$*$ continuous (except for $f(z) = z$), so elements in $\overline{\mathcal{O}(\omega_0)}^*$ can have different level-set measures, which signifies vortex mixing.

Matrix hydrodynamics offers a concrete way to address the weak-$*$ topology in 2-D turbulence, as now explained.



The matrix version of the level-set measure $\lambda_\omega$ is the \emph{empirical spectral measure} from random matrix theory.
Indeed, $W = T_N(\omega)$ gives rise to the atomic measure $\lambda_W^N = \frac{1}{N}\sum_{k=1}^N \delta_{\lambda_k}$ on $\mathbb{R}$, where $\mathrm{i}\lambda_1,\ldots,\mathrm{i}\lambda_N$ are the eigenvalues of $W$.

\begin{theorem}\label{thm:spectral_measure_convergence}
    Let $\omega \in C^2(\Ss^2)$.
    Then $\lambda_{T_N(\omega)}^N \rightharpoonup \lambda_\omega$ as $N\to\infty$.
\end{theorem}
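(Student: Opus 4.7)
The plan is to reduce the weak convergence to convergence of moments, and then combine two Berezin--Toeplitz-type ingredients: an asymptotic multiplicativity of $T_N$ and a semiclassical trace formula, both of which sit in the same framework as Theorem~\ref{thm:quantization_limit}.

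Write $A_\psi := -\mathrm{i}T_N\psi$, which is Hermitian for real $\psi$. By the left inequality of Theorem~\ref{thm:quantization_limit}, $\lVert A_\omega\rVert_\infty \leq \lVert\omega\rVert_{L^\infty}$, so the eigenvalues of $A_\omega$ lie in the compact interval $I := [-\lVert\omega\rVert_{L^\infty},\lVert\omega\rVert_{L^\infty}]$; clearly $\operatorname{supp}(\lambda_\omega)\subseteq I$ as well. Thus the $\lambda^N_{T_N\omega}$ and $\lambda_\omega$ are all probability measures on a common compact set, and by the Weierstrass approximation theorem it is enough to establish the moment convergence
\[
    \frac{1}{N}\operatorname{tr}(A_\omega^k) \;\longrightarrow\; \frac{1}{4\pi}\int_{\Ss^2}\omega^k\,\mu, \qquad k=1,2,\ldots .
\]

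The key technical input is the \emph{asymptotic multiplicativity}: for $\varphi,\psi\in C^2(\Ss^2)$,
\[
    \lVert A_\varphi A_\psi - A_{\varphi\psi}\rVert_\infty \;\longrightarrow\; 0 \quad\text{as } N\to\infty .
\]
This is the symmetric counterpart to the Poisson-bracket bound of Theorem~\ref{thm:quantization_limit}; it is a well-known feature of Berezin--Toeplitz quantization (see, e.g., \cite{ChPo2018}) and can alternatively be extracted from the $L^2$-product estimate underlying Lemma~\ref{thm:La-conv}. Since $\omega\in C^2(\Ss^2)$ implies $\omega^k\in C^2(\Ss^2)$ for every $k\geq 1$, the inequality
\[
    \lVert A_\omega^{k+1} - A_{\omega^{k+1}}\rVert_\infty \leq \lVert A_\omega^k - A_{\omega^k}\rVert_\infty \lVert A_\omega\rVert_\infty + \lVert A_{\omega^k}A_\omega - A_{\omega^{k+1}}\rVert_\infty
\]
together with the uniform bound $\lVert A_\omega\rVert_\infty\leq\lVert\omega\rVert_{L^\infty}$ and an induction (trivial base case $k=1$) gives $\lVert A_\omega^k - A_{\omega^k}\rVert_\infty \to 0$ for every $k$.

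Combined with the semiclassical trace formula $\frac{1}{N}\operatorname{tr}(A_\psi) \to \frac{1}{4\pi}\int_{\Ss^2}\psi\,\mu$ for $\psi\in C^2(\Ss^2)$ (another standard Berezin--Toeplitz property, verifiable directly from Lemma~\ref{lem:decomposition} by evaluating $T_N$ on the spherical-harmonic basis), the triangle inequality
\[
    \Big|\tfrac{1}{N}\operatorname{tr}(A_\omega^k) - \tfrac{1}{4\pi}\textstyle\int_{\Ss^2}\omega^k\mu\Big|
    \leq \lVert A_\omega^k - A_{\omega^k}\rVert_\infty + \Big|\tfrac{1}{N}\operatorname{tr}(A_{\omega^k}) - \tfrac{1}{4\pi}\textstyle\int_{\Ss^2}\omega^k\mu\Big|
\]
closes the argument. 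The asymptotic multiplicativity step is the main hurdle: it is not literally stated in Theorem~\ref{thm:quantization_limit}, but it belongs to the same Berezin--Toeplitz circle of results and is available from \cite{ChPo2018}; all other steps are either soft (Weierstrass and support bounds) or routine induction.
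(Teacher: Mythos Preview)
Your argument is correct and follows essentially the same route as the paper: reduce to moments via density of polynomials, use the product estimate from \cite{ChPo2018} together with an induction to show $\lVert (T_N\omega)^k - T_N(\omega^k)\rVert_\infty \to 0$ (this is precisely the paper's Lemma~\ref{lem:power_quantization_convergence}), and then identify the normalized trace of $T_N(\omega^k)$ with the spherical average. The only cosmetic differences are that the paper peels off a single factor and invokes the $L^2$--Frobenius correspondence rather than the one-argument trace formula you cite, and a minor slip in your write-up: the bound $\lVert A_\omega\rVert_\infty \leq \lVert\omega\rVert_{L^\infty}$ is the \emph{right} inequality in Theorem~\ref{thm:quantization_limit}, not the left one.
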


\begin{proof}
    Let $I = [\inf \omega, \sup \omega]$ and $f\in C^\infty(I)$.
    We need to prove that $C_f^N(T_N(\omega))\to C_f(\omega)$, which is equivalent to weak convergence of the measure.
    
    We can restrict $f$ to be polynomials, which are dense in $C^\infty(I)$.
    Since $C_f(\omega) = \sum_m f_m C_{z^m}(\omega)$ and similarly for $C^N_f$, it is enough to prove that $C^N_{z^m}(T_N\omega)\to C_{z^m}(\omega)$ for $m=0,1,\ldots$.
    Set $W_N = T_N\omega$. Then
    \begin{align*}
        & \frac{1}{4\pi}C^N_{z^m}(W_N) = \frac{(-i)^m}{N}\operatorname{tr}(W_N^m) = \frac{(-i)^m}{N}\operatorname{tr}(W_N W_N^{m-1}) = \\
        & \frac{(-i)^m}{N}\operatorname{tr}\left( W_N\big(W_N^{m-1}-T_N(\omega^{m-1}) + T_N(\omega^{m-1})\big) \right) = \\
        &  \frac{(-i)^{m-1}}{N}\operatorname{tr}\left( W_N T_N(\omega^{m-1})\right) + \operatorname{tr}\left(W_N\big(W_N^{m-1}-T_N(\omega^{m-1})\big) \right).
    \end{align*}
    Since the scaled Frobenius inner product on $\mathfrak{u}(N)$ corresponds to the $L^2$ inner product, i.e., $\langle T_N^*A,T_N^*B \rangle_{L^2} = -\frac{4\pi}{N}\operatorname{tr}(AB)$, we have 
    \begin{equation*}
        \lim_{N\to\infty} \frac{(-i)^m}{N} \operatorname{tr}(W_N, T_N(\omega^{m-1})) = \frac{1}{4\pi}\langle \omega,\omega^{m-1}\rangle_{L^2} = \frac{1}{4\pi}C_{z^m}(\omega).    
    \end{equation*}
    We also have
    \begin{multline*}
        \left| \frac{(-i)^m}{N}\operatorname{tr}\left( W_N\big(W_N^{m-1}-T_N(\omega^{m-1})\big)\right)  \right| \leq \\ \underbrace{\left(\frac{1}{N}\sum_{k=1}^N \lvert \lambda_k \rvert \right)}_{\lVert W_N\rVert_1}\lVert W_N^{m-1} - T_N(\omega^{m-1}) \rVert_\infty \to 0,
    \end{multline*}
    which follows from Lemma~\ref{lem:power_quantization_convergence} below.
    \end{proof}
    
    \begin{lemma}\label{lem:power_quantization_convergence}
        Let $\omega\in C^2(\Ss^2)$. Then $\lVert T_N(\omega)^m - T_N(\omega^m)\rVert_\infty \to 0$ as $N\to\infty$.
    \end{lemma}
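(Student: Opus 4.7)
The plan is to proceed by induction on $m$. The case $m=1$ is tautological. For $m\geq 2$, insert a telescoping term:
\begin{equation*}
    T_N(\omega)^m - T_N(\omega^m) = T_N(\omega)\bigl(T_N(\omega)^{m-1} - T_N(\omega^{m-1})\bigr) + \bigl(T_N(\omega)\,T_N(\omega^{m-1}) - T_N(\omega\cdot\omega^{m-1})\bigr),
\end{equation*}
and estimate the two summands separately in $\lVert\cdot\rVert_\infty$.

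For the first summand, submultiplicativity of the operator norm together with the uniform bound $\lVert T_N\omega\rVert_\infty\leq \lVert\omega\rVert_{L^\infty}$ from Theorem~\ref{thm:quantization_limit} gives
\begin{equation*}
    \lVert T_N(\omega)\bigl(T_N(\omega)^{m-1}-T_N(\omega^{m-1})\bigr)\rVert_\infty\leq \lVert\omega\rVert_{L^\infty}\,\lVert T_N(\omega)^{m-1}-T_N(\omega^{m-1})\rVert_\infty,
\end{equation*}
which tends to zero by the inductive hypothesis (noting $\omega^{m-1}\in C^2(\Ss^2)$).

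For the second summand, the point is the Berezin--Toeplitz product asymptotic
\begin{equation*}
    \lVert T_N(f)\,T_N(g) - T_N(fg)\rVert_\infty \longrightarrow 0 \quad\text{as}\quad N\to\infty,
\end{equation*}
applied with $f=\omega$ and $g=\omega^{m-1}$. One route to this from the ingredients at hand is to split $T_N(f)T_N(g)=\tfrac{1}{2}[T_N(f),T_N(g)]+\tfrac{1}{2}\bigl(T_N(f)T_N(g)+T_N(g)T_N(f)\bigr)$: the commutator piece is $O(\hbar)$ by Theorem~\ref{thm:quantization_limit}, while the Jordan piece converges to $2T_N(fg)$ in operator norm by the standard leading-order Berezin--Toeplitz asymptotic, available in Charles--Polterovich~\cite{ChPo2018} alongside the commutator estimate (and going back to Bordemann--Meinrenken--Schlichenmaier).

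The main obstacle is precisely this product-to-Toeplitz estimate: Theorem~\ref{thm:quantization_limit} as stated controls commutators only, so one must either cite the symmetric (Jordan-product) half from the Berezin--Toeplitz literature or derive it by the same line of argument. A subsidiary technical point is that the $C^2$-norm of $\omega^{m-1}$ is controlled in terms of $\lVert\omega\rVert_{C^2}$ and $\lVert\omega\rVert_{L^\infty}$ by the Leibniz rule, so all implicit constants stay uniform in $N$ as the induction proceeds.
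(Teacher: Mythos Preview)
Your proof is correct and follows essentially the same route as the paper: induction on $m$, the same telescoping split
\[
T_N(\omega)^m - T_N(\omega^m) = T_N(\omega)\bigl(T_N(\omega)^{m-1}-T_N(\omega^{m-1})\bigr) + \bigl(T_N(\omega)T_N(\omega^{m-1}) - T_N(\omega^m)\bigr),
\]
submultiplicativity plus the inductive hypothesis for the first term, and the Berezin--Toeplitz product estimate from Charles--Polterovich~\cite{ChPo2018} for the second. Your use of $\lVert T_N\omega\rVert_\infty\leq\lVert\omega\rVert_{L^\infty}$ for the first term is in fact cleaner than the paper's written bound; the extra discussion of the commutator/Jordan split is unnecessary once you invoke \cite[Prop.~3.12]{ChPo2018} directly, as the paper does.
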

    
    \begin{proof}
        The result is true for $m=1$.
        Assume it for $m-1$. With $W_N = T_N(\omega)$ we have
        \begin{align*}
            & \lVert W_N^m - T_N(\omega^m)\rVert_\infty = \\
            & \lVert W_N W_N^{m-1} - T_N(\omega^m) + W_N T^N(\omega^{m-1}) - W_N T_N(\omega^{m-1})\rVert_\infty \leq \\
            &\lVert W_N (W_N^{m-1} - T_N(\omega^{m-1}))\rVert_\infty + \\
            &\lVert T_N(\omega\omega^{m-1}) - W_N T^N(\omega^{m-1}) \rVert_\infty \leq \\
            &\lVert W_N \rVert_1 \lVert W_N^{m-1} - T_N(\omega^{m-1})\rVert_\infty + \\
            &\lVert T_N(\omega\omega^{m-1}) - W_N T^N(\omega^{m-1}) \rVert_\infty .
        \end{align*}
        That the first term vanishes as $N\to\infty$ follows from the assumption.
        That the second term vanishes follows from product estimates in Berezin-Toeplitz quantization theory \cite[prop.\,3.12]{ChPo2018} and its equivalence to quantization via representations, as described in Appendix~\ref{appendix:line_bundles}.
    \end{proof}

The level-set measure $\lambda_\omega$ is invariant under the Euler flow \eqref{eq:vorteq2}.
Likewise, $\lambda^N_W$ is invariant under the Euler--Zeitlin flow \eqref{eq:zeitlin_model}.
In particular, for any $f\in C^\infty(\mathbb{R})$ the function $C^N_f(W) = \int_{\mathbb{R}}f \, d\lambda^N_W$ is a Casimir for $\mathfrak{su}(N)$.
A direct corollary of Theorem~\ref{thm:spectral_measure_convergence} is that $C^N_f(T_N(\omega))\to C_f(\omega)$ as $N\to \infty$.
In this sense, the Euler--Zeitlin equation nearly conserves all Casimirs of the vorticity equation.

Since the coadjoint action of $F\in \mathrm{SU}(N)$ on $W\in\mathfrak{su}(N)$ is $F W F^\dagger$, it follows from 
the spectral theorem 
that the coadjoint orbit $\mathcal{O}^N(W) = \{ F W F^\dagger\mid F\in \mathrm{SU}(N)\}$ is the set of skew-Hermitian matrices 
with the same spectrum as $W$, or, equivalently, the same empirical measure $\lambda^N_W$.

It is instructive to compare with coadjoint orbits for $\operatorname{Diff}_\mu(\Ss^2)$: the finite-dimen\-sional coadjoint orbit $\mathcal{O}^N(W)$ corresponds (see Proposition~\ref{prop:orbit_thm_reformulated} below) to the set $$\overline{\mathcal{O}}(\omega_0) \coloneqq \{  \omega \in L^\infty(\Ss^2)\mid \lambda_\omega = \lambda_{\omega_0}\}.$$
This set is strictly larger than the smooth orbit $\mathcal{O}(\omega_0)$ (characterized by its measured Reeb graph), but strictly smaller than Shnirelman's weak closure $\overline{\mathcal{O}(\omega_0)}^*$ (characterized by convex vortex mixing, \emph{cf.}~\cite{DoDr2022a}).
For a fixed $N$, matrix hydrodynamics thereby behaves as the Euler equations in the strong $L^\infty$ topology: Casimirs are exactly conserved.
But we can also regard all $N$ simultaneously, which gives a hierarchy of increasingly refined topologies.



\begin{definition}\label{def:zeitlin_topology}
    Let $\mathcal B_r = \{ \omega \in L^\infty(\Ss^2)\mid \lVert\omega\rVert_{L^\infty} \leq r \}$.
    The \emph{matrix topology} on $\mathcal B_r$ is the Fréchet topology defined by the semi-norms
    \[
        \lvert \omega\rvert_N \coloneqq \lVert T_N\omega \rVert_\infty .
    \]
\end{definition}

The semi-norm $\lvert \cdot\rvert_N$ can ``see'' structures down to the length scale $\hbar_N$.
Furthermore, each length scale $\hbar_N$ gives an associated truncated level-set measure $\lambda_{T_N(\omega)}^N$.
To study how it varies, for fixed $N$ as $t\to\infty$, gives information about vortex mixing relative to the scale $\hbar_N$ and a connection to random matrix theory (see Fig.~\ref{fig:mixing_measure}).


\begin{figure}
    \centering
    \includegraphics[width=.4\linewidth]{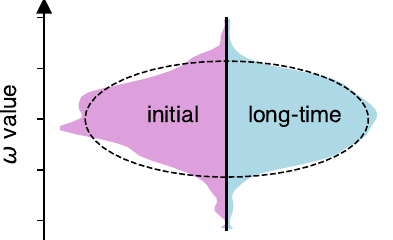}
    \caption{The simulation $W(t)\in \mathfrak{su}(512)$ in Fig.\,\ref{fig:non-vanishing} is projected to $\bar W(t) \in \mathfrak{su}(480)$. The empirical spectral measure $\lambda^{480}_{\bar W(t)}$ is then displayed at the initial (left) and  final (right) time.
    We observe some tendency towards the Wigner semicircle distribution (dashed curve), except the rims survive due to vortex blobs (compare Fig.\,\ref{fig:non-vanish-final}).
    }
    \label{fig:mixing_measure} 
\end{figure}

The following result connects matrix hydrodynamics to Shnirelman's notion of weak-$*$ vortex mixing.

\begin{lemma}\label{lem:topology_strengths}
    The matrix topology is equivalent to the weak-$*$ topology on $\mathcal B_r$.
\end{lemma}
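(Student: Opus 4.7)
My plan is to reduce the equivalence of topologies on $\mathcal{B}_r$ to equivalence of sequential convergence. Both topologies are metrizable on $\mathcal{B}_r$: the matrix topology because it is defined by a countable family of semi-norms, and the weak-$\ast$ topology because $L^1(\Ss^2)$ is separable and $\mathcal{B}_r \subset (L^1)^\ast$ is norm-bounded. The direction weak-$\ast \Rightarrow$ matrix is then essentially automatic: for each fixed $N$, the operator $T_N\colon L^\infty(\Ss^2)\to \mathfrak{su}(N)$ factors through the finite collection of spherical harmonic coefficients $\omega\mapsto \langle Y_{\ell m},\omega\rangle_{L^2}$ with $\ell\leq N-1$, each of which is weak-$\ast$ continuous since $Y_{\ell m}\in L^1(\Ss^2)$. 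All norms on the finite-dimensional space $\mathfrak{su}(N)$ are equivalent, so $\omega_k\overset{\ast}{\rightharpoonup}\omega$ yields $\lvert\omega_k-\omega\rvert_N \to 0$ for every $N$.

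The substantive direction is matrix $\Rightarrow$ weak-$\ast$. Given $\omega_k,\omega\in\mathcal{B}_r$ with $\lvert\omega_k-\omega\rvert_N\to 0$ for every $N$ and a test function $\eta\in L^1(\Ss^2)$, I would first approximate $\eta$ by $\eta'\in C^\infty(\Ss^2)$ in $L^1$; the resulting error in $\int(\omega_k-\omega)\eta\,\mu$ is bounded uniformly in $k$ by $2r\lVert\eta-\eta'\rVert_{L^1}$. For $\eta'$ I would split
\[
    \int_{\Ss^2}\eta'(\omega_k-\omega)\mu = \int_{\Ss^2}(\eta'-T_N^\ast T_N\eta')(\omega_k-\omega)\mu + \int_{\Ss^2}T_N^\ast T_N\eta'(\omega_k-\omega)\mu .
\]
Because $T_N T_N^\ast = I$ on $\mathfrak{u}(N)$, the composition $T_N^\ast T_N$ is the $L^2$-orthogonal projection onto $\operatorname{span}\{Y_{\ell m}\}_{\ell\leq N-1}$, so for smooth $\eta'$ the truncation $T_N^\ast T_N\eta'$ converges to $\eta'$ in $L^2$, hence in $L^1$, making the first integral $o_N(1)$ uniformly in $k$. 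For the second, the pairing $\langle T_N^\ast A,T_N^\ast B\rangle_{L^2} = -\tfrac{2\pi}{N}\operatorname{tr}(AB)$ (from the proof of Theorem~\ref{thm:spectral_measure_convergence}), combined with the trace inequality $\lvert\operatorname{tr}(AB)\rvert \leq \lVert A\rVert_1\lVert B\rVert_\infty \leq N\lVert A\rVert_\infty\lVert B\rVert_\infty$ and the uniform bound $\lVert T_N\eta'\rVert_\infty \leq \lVert\eta'\rVert_{L^\infty}$ from Theorem~\ref{thm:quantization_limit}, controls the second integral by $2\pi\lVert\eta'\rVert_{L^\infty}\,\lvert\omega_k-\omega\rvert_N$. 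Choosing $N$ first (large enough to kill the truncation remainder) and then $k\to\infty$ finishes the estimate.

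The main subtlety is the interchange of limits: $N$ must be fixed first, depending on $\eta'$ and the target accuracy, before letting $k\to\infty$. A second delicate point is that the naive bound $\lVert T_N\eta'\rVert_1 \leq N\lVert T_N\eta'\rVert_\infty$ would destroy the argument were it not for the $\tfrac{1}{N}$ prefactor in the trace pairing, which exactly compensates so that the quantized piece is controlled by the matrix semi-norm with a constant uniform in $N$.
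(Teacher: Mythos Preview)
Your proof is correct and follows essentially the same approach as the paper: both directions rely on the fact that $T_N$ factors through finitely many spherical harmonic coefficients (for weak-$\ast\Rightarrow$ matrix) and on approximating $L^1$ test functions by elements in the range of some $T_N^\ast$ together with the trace pairing $\langle T_N^\ast A,T_N^\ast B\rangle_{L^2}=-\tfrac{2\pi}{N}\operatorname{tr}(AB)$ (for matrix $\Rightarrow$ weak-$\ast$). Your version is slightly more detailed---you make the metrizability reduction and the three-step $\varepsilon/3$ splitting explicit, whereas the paper compresses this into a density-plus-equicontinuity remark---but the core mechanism is the same.
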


\begin{proof}
    The set $V = \{f \mid \exists N : f \in T_N^*\mathfrak{su}(N) \}$ is dense in $L^1(\Ss^2)$.
    For $\omega \in L^\infty(\Ss^2)$ and a test function $f = T_N^*W \in V$ we have
    \begin{equation*}
        \lvert \langle \omega, f \rangle \rvert
        = \lvert \langle\omega , T_{N}^* W \rangle \rvert 
         = \frac{4\pi}{N}\lvert \operatorname{tr}(W T_{N}\omega) \rvert \leq \frac{4\pi}{N} \lvert \omega \rvert_{N} \lVert W \rVert_1. 
    \end{equation*}
    Thus, if, for any $N$, $\lvert \omega_k\rvert_N \to 0$ as $k\to\infty$, then $\langle \omega_k , f \rangle \to 0$ for all $f\in V$, which implies weak-$*$ convergence since the sequence $\omega_k$ in $\mathcal{B}_r$ is bounded in $L^\infty$-norm.
    On the other hand, assuming that $\omega_k \rightharpoonup^{*} 0$, it follows that each spherical harmonic coefficient $(\omega_k)_{\ell m}\to 0$ as $k\to \infty$.
    Thus, since $T_N\omega$ is expanded in a finite basis with the spherical harmonic coefficients, it follows that $\lVert T_N\omega_k \rVert_\infty \to 0$ as $k\to \infty$.
\end{proof}

\begin{remark}
    Alternatively, one can view Lemma~\ref{lem:topology_strengths} as a special case of the general principle that two Hausdorff topologies on a set are equivalent if the first is compact and the second is weaker than the first.
\end{remark}

In consequence, a plausible relaxation of Euler's equations \eqref{eq:vorteq2} to the weak-$*$ topology is to consider the Euler--Zeitlin equation \eqref{eq:zeitlin_model} \emph{simultaneously for all $N \geq N_0$}.
Via the mappings $T_{N-1}\circ T^*_N\colon \mathfrak{u}(N)\to \mathfrak{u}(N-1)$ one can consider an inverse limit $\mathfrak{g} = \underleftarrow{\lim} \, \mathfrak{u}(N)$.
But this construction is too rigid, since $\mathfrak{g}$ is not invariant under the simultaneous Euler--Zeitlin flow.
A plausible direction is to relax the inverse limit, for example based on the equivalence $\{W'_i\}_{i=N_0}^\infty \sim \{W_i\}_{i=N_0}^\infty$ if $\lim_{i\to\infty}\lVert W'_i-W_i\rVert_\infty = 0$, and attempt to prove that the simultaneous Euler--Zeitlin flow is well-defined on equivalence classes.
For such a construction, global existence is built-in (albeit not uniqueness).








\subsection{Schur-Horn-Kostant convexity}\label{sub:BFR}

In their paper on Schur-Horn-Kostant convexity for the area-preserving diffeomorphism group of the annulus, Bloch, Flaschka, and Ratiu~\cite{BlFlRa1993} (BFR) write: ``Diffeomorphism groups are huge, infinite-dimensional Lie groups, but in some respects they can be strikingly similar to finite-dimensional semisimple groups.''
Matrix hydrodynamics can be viewed as an actualization of this statement.

Consider the annulus (or cylinder)
\begin{equation*}
    \mathbb{A} = \mathbb{S}^1 \times [-1,1] = \{(\exp(\mathrm{i}\phi), z)\mid 0\leq \phi< 2\pi, -1\leq z \leq 1  \}
\end{equation*}
equipped with the area-form $\nu = d\phi\wedge d z$.
Let $\mathrm{Diff}_\nu(\mathbb{A})$ denote the \new{infinite-dimensional} Lie group\footnote{\new{More precisely, $\mathrm{Diff}_\nu(\mathbb{A})$ is an infinite-dimensional Lie group in the category of Fréchet manifolds (\emph{cf.}~Hamilton~\cite[Thm.~2.5.3]{Ha1982}).}} of area-preserving diffeomorphisms.
Its Lie algebra $\mathfrak{a}$ can be identified with the space of smooth Hamiltonian functions on $\mathbb{A}$, modulo constants, that are constant on the edges $z=-1$ and $z=1$ (corresponding to tangential boundary conditions), equipped with the Poisson bracket induced by $\nu$.
The point of BFR is to think \emph{morally} of the maps $(\phi,z)\mapsto (\phi + \alpha(z), z)$ as the maximal Abelian subgroup $\mathrm{D}\subset \mathrm{Diff}_\nu(\mathbb{A})$ corresponding to diagonal matrices in the matrix case.
The ``projection onto the diagonal'' of a Lie algebra element is then given by averaging
\begin{equation*}
    \pi\colon \mathfrak{a}\ni \varphi \mapsto \int_{-1}^1 \varphi(\phi,\cdot)d\phi \in \mathfrak{d} = T_e \mathrm{D}.
\end{equation*} 
The corresponding ``Weyl group'' $\mathrm{WG}$ consists of maps $(\phi,z)\mapsto (j(z)\phi, a(z))$, where $a:[-1,1]\to [-1,1]$ is invertible, measure preserving, and $j\colon [-1,1]\to \{-1,1 \}$ \new{is measurable}.
In the smooth category, $\mathrm{WG}$ has only two elements, with $a(z)=z$ and $j(z)=\pm 1$.
However, the idea of BFR is to work in a weaker topology, such that the Weyl group can ``rearrange'' the level-sets of functions in $\mathfrak{d}$ corresponding to changing the order of elements of diagonal matrices.

Let $\overline{\mathrm{Diff}_\nu(\mathbb{A})}$ denote the semigroup of measure preserving endomorphisms, which is the completion of $\mathrm{Diff}_\nu(\mathbb{A})$ with respect to the strong operator topology obtained by thinking of $\Phi \in \mathrm{Diff}_\nu(\mathbb{A})$ as the unitary operator on $L^2(\mathbb{A})$ given by $\omega \mapsto \omega\circ\Phi$.
Let $\overline{\mathrm{WG}}$ denote the corresponding completion of the Weyl group, which  \emph{does} allow rearranging.

The first result of BFR is the \emph{convexity theorem}, which is a beautiful, infinite-dimensional analog of Schur's theorem for matrices.

\begin{theorem}[Convexity theorem~\cite{BlFlRa1993}]\label{thm:convexity_theorem}
    Let $\zeta\in L^2(\mathbb{A})$ be a bounded, non-increasing, right continuous function of $z$ alone, and \new{let} $\overline{\mathcal{O}}(\zeta) = \zeta\circ \overline{\mathrm{Diff}_\nu(\mathbb{A})}$ \new{be} its orbit.
    Then $\pi(\overline{\mathcal{O}}(\zeta))\subset L^2([-1,1])$ is a weakly compact, convex set.
    Furthermore, the set of extreme points of $\pi(\overline{\mathcal{O}}(\zeta))$ is the orbit $\zeta\circ \overline{\mathrm{WG}}$.        
\end{theorem}

The second result of BFR is the \emph{orbit theorem}, which is a beautiful, infinite-dimensional analog of Horn's theorem for matrices and the result that two (skew-)Hermitian matrices $A$ and $B$ share the same spectrum if and only if $\operatorname{tr}(A^m) = \operatorname{tr}(B^m)$ for $m=1,\ldots,N$.

\begin{theorem}[Orbit theorem~\cite{BlFlRa1993}]\label{thm:orbit_thm}
    If $\omega \in L^\infty(\mathbb{A})$, there is a unique bounded, non-increasing, and right continuous function $\zeta$ of $z$ alone such that $\omega\in \overline{\mathcal{O}}(\zeta)$.
    Furthermore, the orbit $\overline{\mathcal{O}}(\zeta)$ consists of all functions $\sigma\in L^\infty(\mathbb{A})$ with the same Casimirs as $\omega$, i.e.,
    \begin{equation*}
        \int_{\mathbb{A}} \sigma^m \, \nu = 2\pi \int_{-1}^1 \zeta^m \, d\phi 
        , \quad m=1,2,\ldots . 
    \end{equation*}        
\end{theorem}

These two results are directly related to the question of completion of coadjoint orbits, flattened Reeb graphs, and vortex mixing as discussed above.
Furthermore, the convexity theorem is directly related to mixing operators and the canonical splitting presented in Sec.~\ref{sec:splitting-and-mixing} below.

We now translate the results of BFR to the spherical setting.
Archimedes demonstrated that the horizontal projection of the cylinder to the sphere is area-preserving (i.e., symplectic).\footnote{For this reason, V.~Arnold called Archimedes ``the first symplectic geometer''.}
Explicitly, it is the map $\alpha\colon \mathbb{A}\to \Ss^2$ given by 
\begin{equation*}
    \alpha\colon (\phi,z)\mapsto (\sqrt{1-z^2}\cos\phi,\sqrt{1-z^2}\sin\phi, z) .    
\end{equation*}
It is almost everywhere invertible, with 
\begin{equation*}
    \alpha^{-1}\colon (x_1,x_2,x_3) \mapsto (\arg(x_1 + \mathrm{i} x_2), x_3).
\end{equation*}
Notice that smooth functions in $\mathfrak{a}$ are constant exactly on the set-valued points $$\alpha^{-1}(0,0,\pm 1) = \big(\Ss^1,\pm 1\big).$$
Since it is measure preserving $\alpha^*\mu = \nu$, it follows that $\overline{\mathrm{Diff}_\nu(\mathbb{A})}$ and $\overline{\mathrm{Diff}_\mu(\Ss^2)}$ are isomorphic via 
\begin{equation*}
    \overline{\mathrm{Diff}_\mu(\Ss^2)} = \alpha\circ \overline{\mathrm{Diff}_\nu(\mathbb{A})}\circ\alpha^{-1} .
\end{equation*}
Furthermore, for $p \in \{1,\ldots,\infty\}$ any $f \in L^p(\Ss^2)$ is mapped to $f\circ\alpha \in L^p(\mathbb{A})$ and vice versa.
Thus, the convexity theorem and the orbit theorem translate to the spherical setting, such that ``diagonal'' functions are zonal (i.e., constant along longitudes).

The orbit theorem provides a natural closure of the smooth coadjoint orbits:


\begin{proposition}\label{prop:orbit_thm_reformulated}
    Let $\omega_0\in L^\infty(\Ss^2)$ and let $\zeta \in L^\infty(\Ss^2)$ be its corresponding unique non-increasing, right continuous zonal function (obtained via the orbit theorem).
    Then 
    \begin{equation*}
        \overline{\mathcal{O}}(\omega_0) \coloneqq \{ \omega\in L^\infty(\Ss^2)\mid \lambda_\omega = \lambda_{\omega_0}\} = \zeta\circ\overline{\mathrm{Diff}_\mu(\Ss^2)}. 
    \end{equation*}
\end{proposition}

\begin{proof}
    This result is essentially a reformulation of the orbit theorem.
    It follows since $\lambda_\omega = \lambda_{\omega_0}$ if and only if $\omega_0$ and $\omega$ have the same polynomial Casimirs (as already used in the proof of Theorem~\ref{thm:spectral_measure_convergence} above).
\end{proof}

The closure $\overline{\mathcal{O}}(\omega_0)$ in Proposition~\ref{prop:orbit_thm_reformulated} is located strictly in between the smooth coadjoint orbits $\mathcal{O}(\omega_0)$ and Shnirelman's weak closure $\overline{\mathcal{O}(\omega_0)}^*$, i.e., $$\mathcal{O}(\omega_0)\subset \overline{\mathcal{O}}(\omega_0) \subset \overline{\mathcal{O}(\omega_0)}^*.$$
In a sense, $\overline{\mathcal{O}}(\omega_0)$ is the smallest closure that allows the matrix theory results to remain intact.
It is also compatible with Yudovich's global well-posedness theory, which gives a transport map $\Phi_t\in \overline{\mathrm{Diff}_\mu(\Ss^2)}$ for initial data $\omega_0\in L^\infty(\Ss^2)$.

\begin{remark}
    Bloch, Flaschka, and Ratiu~\cite{BlFlRa1993} made a connection between matrices and area-preserving diffeomorphisms of the annulus via the Toda lattice model \cite{To1970} and its continuum limit (the dispersionless Toda equations, \emph{cf.}~\cite{BrBl1990}).
    However, that model is somewhat unfulfilling, since the matrices are tridiagonal and do not ``fill out'' the infinite-dimensional group as they grow in size.\footnote{The authors mention this and hint at a series corresponding to Hermitian matrices that ``seem reasonable to consider'': they rediscovered quantization.}
    Matrix hydrodynamics offers a remedy: a model that gives a direct link between the matrix groups $\mathrm{SU}(N)$ and area-preserving diffeomorphisms.
    In addition to Hamiltonian systems,
    the link also leads to matrix flow analogs of incompressible porous medium equations~\cite{KhMo2023}, via the gradient flow interpretation of the Toda lattice.
\end{remark}

\section{Canonical splitting and mixing operators} \label{sec:splitting-and-mixing}

We now showcase a technique in 2-D turbulence enabled via matrix hydrodynamics: \emph{canonical scale separation} \cite{MoVi2022}.

Stationary solutions of the Euler--Zeitlin equations~\eqref{eq:zeitlin_model} are characterized by the relation $[P,W]=0$.
The \emph{stabilizer} of $P$ is the subspace $\stab_P = \{ W\in  \mathfrak{su}(N)\mid [P,W]=0\}$.
For a state $W$ and $P= -\Delta_N^{-1}W$, the distance of $W$ to $\stab_{P}$ thus measures how far from stationary the state is.

Let $\Pi_P$ denote the projection onto $\stab_P$ with respect to the inner product $\langle W, U\rangle_2 = \frac{2\pi}{N}\operatorname{tr}(W^\dagger U)$ corresponding to $L^2$.
It induces a canonical splitting $W = W_s + W_r$ where $W_s = \Pi_P W$.
Explicitly, if $E\in \mathrm{U}(N)$ is an eigen basis for $P$, then $\Pi_P(W) = E^*\pi_N(EWE^*)E$, where $\pi_N$ is the orthogonal projection onto the space of diagonal matrices (i.e., extracting the diagonal).
From this formula, we can express the Euler--Zeitlin equations in the (now time dependent) variables $W_s$ and $W_r$:
\begin{equation}\label{eq:EZ_splitting}
\begin{array}{ll}
&\dot{W}_s = [B,W_s] - \Pi_P[B,W_r]\\
&\dot{W}_r = [P,W_r] - [B,W_s] + \Pi_P[B,W_r],
\end{array}
\end{equation}
where $B$ is the unique element in the complement $\stab_P^\perp$ such that $$[B,P]=\Pi_P^\perp\Delta^{-1}[P,W].$$
(See Modin and Viviani~\cite[sect.~3.1]{MoVi2022} for a derivation of the equations~\eqref{eq:EZ_splitting}.)
By construction, the decomposition is orthogonal in $\langle\cdot,\cdot\rangle_2$.
Curiously, it also fulfills a ``reversed'' orthogonally condition with respect to the energy norm $\lVert W \rVert_{E}^2\coloneqq \langle W, \Delta_N^{-1}W\rangle_{2}$.
 
\begin{lemma}[\cite{MoVi2022}]\label{lem:orthogonal_energy}
    The canonical splitting fulfills 
    \begin{align*}
        \lVert W \rVert_2^2 = \lVert W_s \rVert_2^2 + \lVert W_r \rVert_2^2 \quad \text{and}\quad
        \lVert W_s \rVert_{E}^2 = \lVert W \rVert_{E}^2 + \lVert W_r \rVert_{E}^2 .        
    \end{align*}
\end{lemma}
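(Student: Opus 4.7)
The plan is to derive both identities directly from the orthogonal projection $\Pi_P$, using one single crucial observation: $P \in \stab_P$ trivially (since $[P,P]=0$), so $P$ is orthogonal in $L^2$ to the complement $\stab_P^\perp$ where $W_r$ lives.

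First I would handle the $L^2$ identity, which is immediate: by definition, $W_s = \Pi_P W$ is the $L^2$ projection of $W$ onto $\stab_P$, and $W_r = W - W_s$ lies in $\stab_P^\perp$. Since $\Pi_P$ is an orthogonal projection with respect to the Frobenius inner product $\langle \cdot,\cdot\rangle_2$, the classical Pythagorean theorem gives $\lVert W\rVert_2^2 = \lVert W_s\rVert_2^2 + \lVert W_r\rVert_2^2$.

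For the energy-norm identity I would substitute $W_s = W - W_r$ and expand, using that $\Delta_N^{-1}$ is self-adjoint with respect to $\langle\cdot,\cdot\rangle_2$ and that $\Delta_N^{-1}W = P$:
\begin{equation*}
\lVert W_s\rVert_E^2 = \langle W - W_r,\, \Delta_N^{-1}(W-W_r)\rangle_2 = \lVert W\rVert_E^2 - 2\langle W_r, P\rangle_2 + \lVert W_r\rVert_E^2.
\end{equation*}
The cross term $\langle W_r, P\rangle_2$ vanishes because $P\in\stab_P$ while $W_r\in\stab_P^\perp$, so the reversed Pythagorean identity follows at once.

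There is essentially no obstacle here; the only subtlety worth flagging is the verification that $P \in \stab_P$, which is trivial from $[P,P]=0$, and the self-adjointness of $\Delta_N^{-1}$, which follows because $\Delta_N$ is built as the sum of squares of skew-adjoint operators $\operatorname{ad}_{S_\alpha}$ and is therefore symmetric on $\mathfrak{su}(N)$ (with $\Delta_N^{-1}$ well-defined on the orthogonal complement of its kernel $\mathrm{i}\mathbb{R}I$, which is exactly $\mathfrak{su}(N)$). Both observations are one-liners, so the lemma reduces to an elementary expansion.
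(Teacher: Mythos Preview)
Your proof is correct. The paper itself does not supply a proof of this lemma; it is quoted from \cite{MoVi2022} without argument, so there is nothing to compare against beyond confirming that your reasoning is sound---and it is: the first identity is Pythagoras for the orthogonal projection $\Pi_P$, and the second follows from the single observation that $P=\Delta_N^{-1}W\in\stab_P$ kills the cross term $\langle W_r,P\rangle_2$.
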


\begin{remark}
    In the special case when $P$ is diagonal, the projection $\Pi_{P}W$ selects the diagonal of $W$. 
    From Schur's theorem it then follows that $C_f^N(\Pi_{P}W) \leq C_f^N(W)$ for convex functions $f$, and that $C_f^N(\Pi_{P}W) = C_f^N(W)$ for all convex $f$ if and only if $P$ and $W$ are simultaneously diagonalizable.
    Since a general $P\in \mathfrak{u}(N)$ can be diagonalized, and since $(P,W)\mapsto \Pi_P W$ is equivariant under the adjoint action, we also have $C_f^N(\Pi_{P}W) \leq C_f^N(W)$ for generic~$P$.
\end{remark}


Why is the canonical splitting interesting?
The theory of statistical mechanics, applied to Euler's equations on $\Ss^2$, predicts a long-time behavior of small scale fluctuations about a large scale, steady state~\cite{He2013}.
Thus, we expect $W_r = W - W_s$ to decrease\footnote{But due to Poincaré recurrence only in a statistical sense: see \href{https://klasmodin.github.io/blog/2023/zeitlin-reversibility/}{zeitlin-reversibility-blog} \cite{Mo2023-reversibility-blog}.} with time in the energy norm, which is in-sensitive to small scales.
Indeed, in numerical simulations the component $W_s$ favors evolution into large scales, while $W_r$ favors small scales (see Fig.~\ref{fig:non-vanishing_canonical} and \cite[Fig.\,3b]{MoVi2022}). 
This canonical separation of scales is a perspective on Kraichnan's~\cite{Kr1967} backward energy and forward enstrophy cascades.

\begin{figure}
    \centering
    \begin{subfigure}{0.49\columnwidth}
        \includegraphics[width=\textwidth]{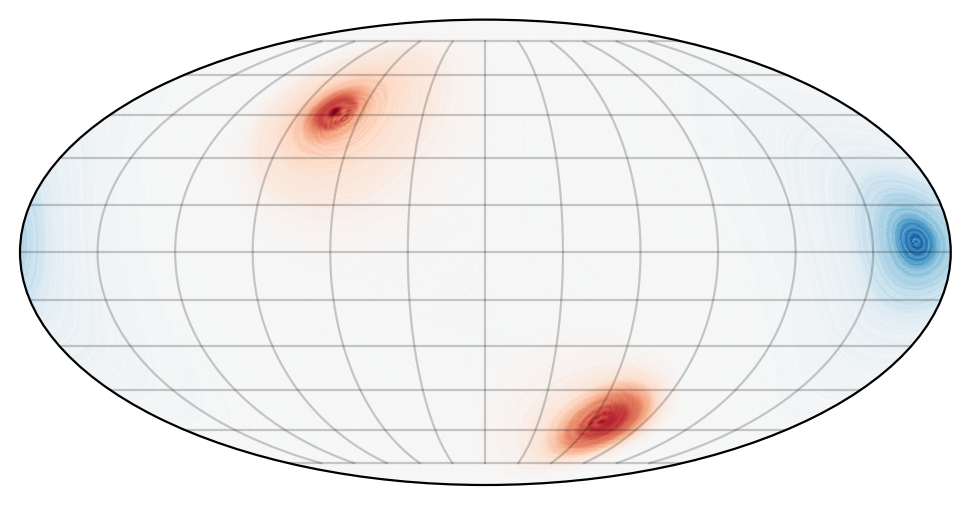}
        \caption{component $W_s$}
        \label{fig:non-vanish-final_s}
    \end{subfigure}
    \begin{subfigure}{0.49\columnwidth}
        \includegraphics[width=\textwidth]{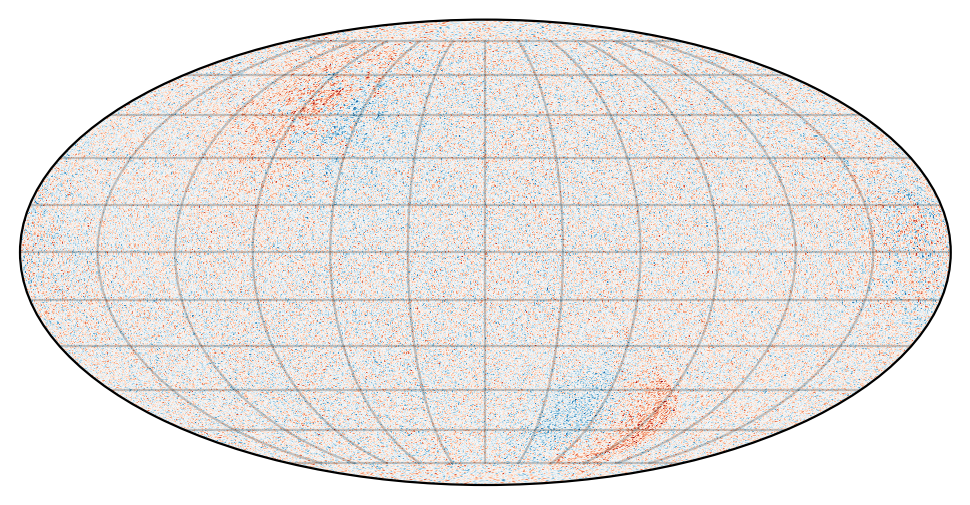}
        \caption{component $W_r$}
        \label{fig:non-vanish-final_r}
    \end{subfigure}
    \caption{
        Canonical splitting $W=W_s + W_r$ of the long-time state $W$ from Fig.~\ref{fig:non-vanish-final}.
        The components capture the large and small scales.
        The projection $W\mapsto W_s$ is a mixing operator.
    }
    \label{fig:non-vanishing_canonical}
\end{figure}





The canonical splitting came naturally from matrix Lie theory via the stabilizer.
We now carry over the developments from $\mathfrak{su}(N)$ to $L^\infty(\Ss^2)$, which gives a connection to \emph{mixing operators} studied by Shnirelman~\cite{Sh1993}.
They are the bistochastic operators, defined in terms of kernels as
\begin{equation*}
    \mathcal K = \{ k\colon \Ss^2\times \Ss^2 \to \mathbb{R} \mid k\geq 0, \int_{\Ss^2} k(x,\cdot) = \int_{\Ss^2} k(\cdot, y) = 1 \} . 
\end{equation*}
Given $\omega,\omega' \in \overline{\mathcal O(\omega_0)}^*\cap \{ H(\omega) = H(\omega_0)\}$, the partial ordering $\omega' \preceq \omega$ means there exists $K \in \mathcal K$ such that $\omega' = K \omega$.
Shnirelman defines $\omega^*$ as a \emph{minimal flow} if $\omega\preceq \omega^*$ implies $\omega^*\preceq \omega$, and he proves that such $\omega^*$ has the stationary property $\omega^* = f\circ \psi^*$ for a bounded, monotone function $f$.

The analog of $\Pi_P$ is the operator $\Pi_\psi$ for $L^2$ projection onto $\mbox{stab}_\psi = \overline{\operatorname{ker}\{\psi,\cdot \}}^{L^2}$, 
formally given by averaging along the level-sets of $\psi$.
For a generic $\psi\in C^1(\Ss^2)$, it is well-defined as a norm-1 operator $\Pi_\psi\colon L^p(\Ss^2)\to L^p(\Ss^2)$ \cite[prop.\,5.4]{MoVi2022}.

\begin{remark}
    Notice that the special case $\Pi_{x_3}$ is the projection $\pi$ in the Convexity theorem~\ref{thm:convexity_theorem} of Bloch, Flaschka, and Ratiu~\cite{BlFlRa1993}. 
    If $\zeta$ is the zonal function from the Orbit theorem, so that $\psi = \zeta\circ\Phi$, and the measure preserving map $\Phi$ happens to be invertible, then we have the formula $\Pi_\psi(\omega) = \pi(\omega\circ\Phi^{-1})\circ\Phi$.
    From this perspective, we expect that $C_f(\Pi_\psi(\omega)) = C_f(\omega)$ for all convex functions $f$ if and only if $\psi$ and $\omega$ are ``simultaneously diagonalizable'', i.e., $\omega\circ\Phi^{-1}$ is zonal.
    This viewpoint is related to the work by Dolce and Drivas~\cite{DoDr2022a}, which characterizes Shnirelman's minimal flows via convex Casimirs.

    
    
        
\end{remark}


\begin{proposition}[\cite{MoVi2022}]\label{prop:canonical_mixing}
    $\Pi_\psi$ is a mixing operator.
\end{proposition}


In the canonical splitting $\omega = \omega_s + \omega_r$, the observed numerical decay of the energy norm $\lVert \omega_r \rVert_E^2 = \langle \omega_r,\Delta^{-1}\omega_r\rangle_{L^2}$ is compatible with Shnirelman's weak-$*$ convergence:


\begin{proposition}\label{prop:Wr_convergence}
    Let $\omega(t)\in L^\infty(\Ss^2)$ be a global solution of \eqref{eq:vorteq2} and consider its canonical splitting $\omega(t)= \omega_s(t) + \omega_r(t)$.
    Then $\|\omega_r(t_n)\|_{E}\rightarrow 0$ if and only if $\omega_r(t_n)\overset{\ast}{\rightharpoonup} 0$ as $t_n\to\infty$.
\end{proposition}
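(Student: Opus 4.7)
The plan is to reduce both directions to compactness of the inverse Laplacian on mean-zero $L^2$. The energy seminorm $\|\cdot\|_E$ coincides, up to a multiplicative constant, with the $H^{-1}$-norm on $L^2_0(\Ss^2)$, and the key fact is that $(-\Delta)^{-1}\colon L^2_0(\Ss^2)\to L^2_0(\Ss^2)$ is compact and self-adjoint, since its eigenvalues $\{1/\ell(\ell+1)\}_{\ell\ge 1}$ form a null sequence. The preliminary bounds come from $\omega(t)=\omega_0\circ\Phi_t^{-1}$ with $\Phi_t$ measure-preserving, giving $\|\omega(t)\|_{L^\infty}=\|\omega_0\|_{L^\infty}$ for all $t$. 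Combined with Lemma~\ref{lem:canonical_mixing}, which implies that $\Pi_{\psi(t)}$ has operator norm $1$ on each $L^p$, the sequence $(\omega_r(t_n))$ is uniformly bounded in every $L^p(\Ss^2)$. Moreover each $\omega_r(t_n)$ is mean-zero: since $1\in\stab_{\psi(t)}$, $\Pi_{\psi(t)}1=1$, so $\langle \Pi_{\psi(t)}\omega,1\rangle=\langle\omega,1\rangle=0$.

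For the forward direction, assume $\|\omega_r(t_n)\|_E\to 0$, equivalently $\omega_r(t_n)\to 0$ in $H^{-1}(\Ss^2)$. Given $f\in L^1(\Ss^2)$ and $\varepsilon>0$, pick $f_\varepsilon\in C^\infty(\Ss^2)$ with $\|f-f_\varepsilon\|_{L^1}<\varepsilon$ and split
\[
    |\langle \omega_r(t_n), f\rangle| \le \|\omega_r(t_n)\|_{L^\infty}\|f-f_\varepsilon\|_{L^1} + \|\omega_r(t_n)\|_{H^{-1}}\|f_\varepsilon\|_{H^1}.
\]
The first term is at most $2\|\omega_0\|_{L^\infty}\varepsilon$ uniformly in $n$, while the second vanishes by hypothesis; letting $\varepsilon\downarrow 0$ yields weak-$*$ convergence. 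Conversely, if $\omega_r(t_n)\overset{*}{\rightharpoonup}0$ in $L^\infty$, then testing against elements of $L^2\subset L^1(\Ss^2)$ produces weak $L^2$-convergence to $0$. Compactness of $(-\Delta)^{-1}$ turns this weakly null sequence into a strongly null one in $L^2$, and Cauchy--Schwarz gives
\[
    \|\omega_r(t_n)\|_E^2 = \bigl|\langle \omega_r(t_n),(-\Delta)^{-1}\omega_r(t_n)\rangle\bigr| \le \|\omega_r(t_n)\|_{L^2}\,\|(-\Delta)^{-1}\omega_r(t_n)\|_{L^2} \to 0.
\]

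The individual steps are technically routine; the main conceptual obstacle is bookkeeping around the splitting. One must verify that the canonical splitting of a mean-zero Yudovich solution produces $\omega_s,\omega_r\in L^2_0(\Ss^2)$ so that the $H^{-1}$--energy identification is valid, and that $\Pi_{\psi(t)}$ remains uniformly bounded on $L^\infty$ along the trajectory. Both facts follow from the identification of $\Pi_\psi$ as level-set averaging and are essentially contained in the results from \cite{MoVi2022}.
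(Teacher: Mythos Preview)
Your proof is correct and follows essentially the same route as the paper. The paper isolates the equivalence ``$H^{-1}$-convergence $\Leftrightarrow$ weak-$*$ $L^\infty$ convergence $\Leftrightarrow$ weak $L^2$ convergence for uniformly $L^\infty$-bounded sequences'' into a separate lemma (proved via Rellich--Kondrachov and the same density argument you use), and then applies it with the uniform bound $\|\omega_r(t_n)\|_{L^\infty}\le C\|\omega_0\|_{L^\infty}$; your inline argument via compactness of $(-\Delta)^{-1}$ and Cauchy--Schwarz is an equivalent phrasing of the same compactness step.
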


\begin{lemma}\label{prop:lemma1}
    Let $\lbrace\omega_n\rbrace$ be a uniformly bounded sequence in $L^\infty$.
    Then, for the limit $n\to\infty$, the following statements are equivalent:
    \begin{enumerate}
        \item $\|\omega_n-\overline{\omega}\|_{E}\rightarrow 0$,
        \item $\omega_n\overset{\ast}{\rightharpoonup} \overline{\omega}$  weakly-$*$ in $L^\infty$,
        \item $\omega_n\rightharpoonup\overline{\omega}$ weakly in $L^2$.
    \end{enumerate}
\end{lemma}
\proof Weak-$*$ convergence in $L^\infty$ implies weak convergence in $L^2$, since $L^2\subset L^1$, for bounded domains. 

On the other hand, if $\omega_n\rightharpoonup\overline{\omega}$ weakly in $L^2$, then being  $L^2$ compactly embedded in $H^{-1}$ by the Rellich--Kondrachov theorem, we have that $\|\omega_n-\overline{\omega}\|_{E}\rightarrow 0$.

Finally, it remains to prove that convergence in $H^{-1}$ implies convergence in the weak-$*$ topology.
Let $\phi\in C^1$, then we have that
\[
|\int (\omega_n-\overline{\omega})\phi dS| \leq \|\omega_n-\overline{\omega}\|_{E}\|\phi\|_1\rightarrow 0, \mbox{ for }n\rightarrow\infty,
\]
and, by hypothesis,
\[\|\omega_n-\overline{\omega}\|_\infty\leq C,\]
for any $n\geq 1$.
Hence, since $C^1$ is dense in $L^1$ and the sequence of linear functionals $f_n(\phi) := \int (\omega_n-\overline{\omega})\phi dS$ defined on $L^1$ is bounded in norm by the constant $C$ and so they are equicontinuous on $L^1$, we get that $\omega_n\overset{\ast}{\rightharpoonup} \overline{\omega}$, for $n\rightarrow\infty$.
\endproof

\begin{proof}[Proof of Proposition~\ref{prop:Wr_convergence}] 
The thesis follows from Lemma~\ref{prop:lemma1} and from the uniform bound
\[\|\omega_r(t_n)\|_\infty\leq 2 \|\omega(0)\|_\infty,\]
for any $n\geq 1$.
This bound is a consequence of \cite[prop.\,5.4]{MoVi2022}, since
\[
\|\omega_r(t)\|_\infty=\|\omega(t)-\omega_s(t)\|_\infty\leq \|\omega(t)\|_\infty + \|\omega_s(t)\|_\infty \leq 2\|\omega_0\|_\infty.\]
\end{proof}

Hence, the states $\omega_s$, observed in our long-time numerical simulations, are natural candidates for the weak-$*$ vorticity limits.
We notice that minimal flows seem too restrictive as candidates for long-time asymptotics on $\Ss^2$; 
essentially, they allow only two vortex condensates, or the shape of similar signed condensates would have to balance perfectly, which is physically unlikely and numerically unseen.
Instead, it is common to see \emph{branching} in the functional dependence between vorticity and stream function \cite[fig.\,7 and 8]{MoVi2022}.

The canonical splitting offers a remedy: a selection criterion for a subset of mixing operators to define less restrictive minimal flows, namely 
\begin{align*}
 \mathcal{K}_\Pi=\lbrace\Pi_\psi \mid \Delta\psi\in \overline{\mathcal{O}(\omega_{0})}^*&\cap\lbrace H(\Delta\psi) =H(\omega_0)\rbrace\\
 &\cap\lbrace M(\Delta\psi) =M(\omega_0)\rbrace\rbrace,   
\end{align*}
where $M$ is the angular momentum.
Minimal flow with respect to these mixing operators might be compatible with the numerical observations (more than two blobs and branching), contrary to the full set of mixing operators, for which minimal flows cannot have branching and therefore generically necessitates two blobs only.    

\section{A conjecture on the long-time behavior and integrability}\label{sec:integrability-and-long-time}

One way to approach the asymptotic, long-time behavior is to ask what is contained in the $\omega$-limit set
\begin{equation}\label{eq:omega_limit_set}
    \Omega_+(\omega_0) = \bigcap_{s\geq 0} \overline{\{S_t(\omega_0)\mid t\geq s \}}^*   . 
\end{equation}
Shnirelman~\cite{Sh2013} conjectured that $\Omega_+(\omega_0)$ is compact in $L^2$. 
This conjecture can be understood as follows.
$\Omega_+(\omega_0)$ is indeed compact in the weak $L^2$ topology, so the $L^2$ norm (enstrophy) attains a minimum $\hat{\omega}$ on it.
We observe that $\overline{\{S_t(\hat{\omega})\mid t\geq 0 \}}^*\subset\Omega_+(\omega_0)$ and also the $L^2$ norm is constant on $\overline{\{S_t(\hat{\omega})\mid t\geq 0 \}}^*$.
Therefore, any weakly convergent sequence in $\overline{\{S_t(\hat{\omega})\mid t\geq 0 \}}^*$ is also strongly convergent in $L^2$.
Consequently, the conjecture of Shnirelman is that no further mixing (loss of enstrophy) can occur for initial data in $\Omega_+(\omega_0)$.

The conjecture does not imply that steady state is reached (e.g.,~a minimal flow).
Indeed, unsteadiness can persist indefinitely, without further mixing.
From batches of numerical experiments \cite{MoVi2020}, we conjecture that persistent unsteadiness in $\Omega_+(\omega_0)$ is connected to integrable configuration of ``vortex blob dynamics'', where the centers of mass of $n$ fixed-shape blobs interact (\emph{cf.}~Fig.\,\ref{fig:non-vanish-final_s}).
Depending on the domain's geometry, there is a threshold $n$ for which the dynamics is always integrable \cite{MoVi2020b}.
The underlying mechanism is then that vortex mixing continues until a large-scale almost integrable configuration of vortex blobs is reached.
Being quasi-periodic, it then acts as a barrier for further mixing, trapping the vorticity in persistent unsteadiness.
On $\Ss^2$, the thresholds are $n=3$ for generic initial data, and $n=4$ for vanishing angular momentum.
Long-time numerical simulations with Zeitlin's model align with this conjecture: for generic (smooth) initial data they typically settle on $n=3$ blobs (e.g.~Fig.\,\ref{fig:non-vanishing}), and for vanishing momentum on $n=4$ blobs; see \cite[Sec.\,4]{MoVi2020} for details.








\appendix
\begingroup
\allowdisplaybreaks

\section{Proof of Theorem~\ref{thm:convergence}}\label{appendix:convergence_proof}

Let $T_N$ denotes the projection of a function from $L^2(\Ss^2)$ to $\su(N)$.
In the following calculations, we mainly follow the approach of Gallagher~\cite{Ga2002} and the results by Flandoli, Pappalettera, and Viviani~\cite[Appendix\, A]{FlPaVi2022}.
The basic strategy is to show that the Zeitlin flow $W(t)\in\mathfrak{su}(N)$ lifted to $L^2$ via $\omega_N(t) \coloneqq T_N^*W(t)$ fulfills a differential inequality of the form
\begin{equation*}
    \frac{d}{dt}\lVert \omega_N(t) - \omega(t) \rVert \leq a_N + b_N \lVert \omega_N(t) - \omega(t) \rVert
\end{equation*}
such that $a_N \to 0$ and $b_N$ is bounded as $N\to \infty$ and $0 \leq t \leq t_{\it max}< \infty$.
Convergence then follow from the Grönwall lemma if $\lim_{N\to\infty}\lVert \omega_N(0) - \omega(0) \rVert = 0$.
To achieve the differential inequality we need to understand how the matrix commutator $[\cdot,\cdot]$ interacts with the Poisson bracket $\{\cdot,\cdot\}$.
This is done by studying the relation between the corresponding Lie algebra structure constants relative to the spherical harmonics basis (which is available for both functions and matrices).

We begin with the following result (with the convention that $\lVert\cdot\rVert_s$ denotes the $H^s(\Ss^2)$ Sobolev norm, $\lVert\cdot\rVert_0$ applied to matrices is the scaled Frobenius norm, and $[\cdot\,,\cdot]_N \coloneqq [\cdot\,,\cdot]/\hbar_N$).

\begin{lemma}[$L^2$-bracket convergence]\label{thm:La-conv}
Let $f\in H^{1+\epsilon}(\Ss^2)$ and $g\in H^{3+\varepsilon}(\Ss^2)$, we have that for any $\varepsilon>0$, $0<\alpha<2/3$, and $0<\beta<2/5$ there exist positive constants $C(\alpha)$, $C(\beta)$, $C(\alpha,\epsilon)$, and $C(\beta,\epsilon)$ such that
\begin{align*}
&\|[T_N\Delta^{-1}f,T_Ng]_N-T_N\lbrace\Delta^{-1}f,g\rbrace\|_0\leq \\
&\dfrac{C(\alpha)}{N^{2(1-3/2\alpha)}}\|f\|_{0}\|g\|_{1}
+\dfrac{C(\beta)}{N^{2(1-5/2\beta)}}\|f\|_{0}\|g\|_{0}+\\
&\min\left\lbrace \frac{C(\alpha,\varepsilon)}{N^{\alpha\varepsilon}}
\|f\|_{1+\varepsilon},C(\alpha) \textit{o}(1)\|f\|_0\right\rbrace\|g\|_1+\dfrac{C(\beta,\varepsilon)}{N^{\beta\varepsilon}}\|f\|_{0}\|g\|_{3+\varepsilon},
\end{align*}
where $o(1)\to 0$ as $N\to\infty$.
\end{lemma}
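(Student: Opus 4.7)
The plan is a frequency-localization argument modeled on Gallagher's $L^2$-bracket estimate for the torus, adapted to spherical harmonics and the Hoppe--Yau quantization. For cutoffs $K=N^\alpha$ and $L=N^\beta$, I would decompose $f = P_{\leq K}f + P_{>K}f$ and $g = P_{\leq L}g + P_{>L}g$, where $P_{\leq M}$ is the $L^2$-projection onto spherical harmonics of degree $\leq M$. Bilinearity of the matrix commutator and of the Poisson bracket then splits the quantization error into a single low--low term and three cross terms that each carry at least one high-frequency factor, and each piece is estimated separately.

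On the low--low piece I would apply Theorem~\ref{thm:quantization_limit}, which controls the bracket error in operator norm; since the scaled Frobenius norm $\|\cdot\|_0$ is dominated by the operator norm up to a universal constant, it suffices to work in operator norm. Because $\Delta^{-1}P_{\leq K}f$ and $P_{\leq L}g$ are band-limited, Bernstein's inequality on $\Ss^2$, $\|P_{\leq M}h\|_{C^k}\lesssim M^{k+1+\delta}\|h\|_{L^2}$, together with the two-derivative gain of $\Delta^{-1}$ on the low part, converts each Charles--Polterovich $C^k$-norm into a power of $K$ or $L$ multiplying an $L^2$-norm. Distributing one derivative onto $g$ in its $H^1$-norm produces the first bound $N^{-2+3\alpha}\|f\|_0\|g\|_1$, while keeping all derivatives on the truncations produces the second $N^{-2+5\beta}\|f\|_0\|g\|_0$.

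For the three cross terms I would combine the matrix Hölder inequality $\|[A,B]\|_0\leq 2\|A\|_\infty\|B\|_0$ with the $L^\infty$-boundedness of $T_N$ (extended as in the remark following Theorem~\ref{thm:quantization_limit}) and the Sobolev embedding $H^{1+\delta}(\Ss^2)\hookrightarrow L^\infty$, reducing everything to $L^2$-bounds on high-frequency projections. The elementary decay $\|P_{>M}h\|_{H^s}\leq M^{s-t}\|h\|_{H^t}$ for $s\leq t$ then delivers the rates: with $g\in H^{3+\varepsilon}$ and $M=L$ it gives the fourth term $N^{-\beta\varepsilon}\|f\|_0\|g\|_{3+\varepsilon}$, and with $f\in H^{1+\varepsilon}$ and $M=K$ it gives the quantitative $H^{1+\varepsilon}$-alternative inside the $\min\{\cdot,\cdot\}$ of the third term. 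The $o(1)$-alternative uses only that $\|P_{>K}f\|_{L^2}\to 0$ for any $f\in L^2$ (Plancherel on $\Ss^2$), with no quantitative rate; this is the qualitative property one needs when the lemma is fed an $L^2$-solution in Theorem~\ref{thm:convergence}. The Poisson-bracket side $T_N\{\Delta^{-1}h_1,h_2\}$ is handled symmetrically via $\|\{a,b\}\|_{L^2}\leq\|\nabla a\|_{L^\infty}\|\nabla b\|_{L^2}$ and the standard elliptic bound $\|\nabla\Delta^{-1}f\|_{L^\infty}\lesssim\|f\|_{L^2}$.

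The main obstacle is the bookkeeping: tracking the regularity budget through the four bilinear terms while choosing the cutoff schedule $(K,L)=(N^\alpha,N^\beta)$ and the derivative distribution so that every resulting exponent of $N$ is manifestly negative. The ranges $\alpha<2/3$ and $\beta<2/5$ drop out precisely from these balance constraints. A delicate technical point inherited from the Flandoli--Pappalettera--Viviani framework is that Theorem~\ref{thm:quantization_limit} gives only an $O(\hbar)$ bracket error, whereas the first two target terms behave as $O(\hbar^2)$ once the Bernstein factors are stripped off; recovering the extra $\hbar$ requires either a sharper semiclassical expansion for band-limited symbols or an additional cancellation visible only after testing against the symmetric Frobenius pairing, and that is the step that demands the most care.
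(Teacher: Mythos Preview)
Your high–low frequency splitting at scales $K=N^{\alpha}$ and $L=N^{\beta}$ is exactly the decomposition the paper uses. The genuine gap is the one you flag yourself, and it is not a matter of bookkeeping: Theorem~\ref{thm:quantization_limit} (Charles--Polterovich) only gives an $O(\hbar)=O(N^{-1})$ bracket error, and no choice of derivative distribution through Bernstein recovers the $O(N^{-2})$ rate needed for the first two terms. The paper does \emph{not} use Theorem~\ref{thm:quantization_limit} here at all; it works entirely in the spherical-harmonic basis and invokes the pointwise structure-constant asymptotics of \cite[Prop.~14(1)]{FlPaVi2022}, which give
$|C^{(N)l_3m_3}_{l_1m_1,l_2m_2}-C^{l_3m_3}_{l_1m_1,l_2m_2}|^2 \lesssim l_{\max}^{8}/N^{4}$
on the low--low block. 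That second power of $\hbar$ is built into this sharper semiclassical input and is not visible from the operator-norm statement of Theorem~\ref{thm:quantization_limit}.

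Your proposed treatment of the cross terms has a separate problem. The bracket in the statement is the \emph{scaled} commutator $[\,\cdot\,,\,\cdot\,]_N=\hbar^{-1}[\,\cdot\,,\,\cdot\,]$, so the matrix H\"older estimate reads $\lVert [A,B]_N\rVert_0\le 2\hbar^{-1}\lVert A\rVert_\infty\lVert B\rVert_0$. Taking $A=T_N\Delta^{-1}P_{>K}f$ and controlling $\lVert A\rVert_\infty$ via Sobolev gives at best $K^{-1+\delta}\lVert f\rVert_0$, hence an overall factor $N\cdot N^{-\alpha(1-\delta)}$, which diverges for every $\alpha<1$; swapping the roles of $A$ and $B$ produces $N^{1-2\alpha}$ times an $L^\infty$-norm of $g$, not the stated $\lVert g\rVert_1$. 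The paper avoids this by again estimating the \emph{difference} of structure constants via \cite[Prop.~14(2)]{FlPaVi2022}, which yields $|C^{(N)}-C|\lesssim l_{\max}^{2}$ uniformly in $N$; on the high-$l_1$ block the factor $l_1^{2}$ cancels exactly against the $l_1^{-2}$ from $\Delta^{-1}$, leaving only the tail $\sum_{l_1>N^\alpha}|f_{l_1m_1}|^{2}$ (whence both alternatives in the $\min$) times $\lVert g\rVert_1^{2}$. This cancellation is invisible to a bound that treats $[T_N a,T_N b]_N$ and $T_N\{a,b\}$ separately. In short, both pieces of your argument need the explicit coefficient-level estimates of \cite{FlPaVi2022}, not the black-box Theorem~\ref{thm:quantization_limit}.
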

\proof
Let $C^{(N)l_3m_3}_{l_1m_1,l_2m_2}$ and $C^{l_3m_3 }_{l_1m_1,l_2m_2}$ denote the structure constants of $[\cdot,\cdot]_N$ and $\{\cdot,\cdot\}$ relative to the spherical harmonics basis (see Hoppe~\cite{Ho1982} and Yoshida~\cite{Yo1997}), and
let $N$ be an even integer.
We want to estimate the difference
\begin{align*}
&\|[T_N\Delta^{-1}f,T_Ng]_N-T_N\lbrace\Delta^{-1}f,g\rbrace\|_0^2 \lesssim \\
&\sum_{\substack{l_1<N,l_2<N\\ -l_1\leq m_1\leq l_1\\-l_2\leq m_2\leq l_2}}
\sum_{\substack{l_3=|l_1-l_2|+1\\ -l_3\leq m_3\leq l_3}}^{{\min\lbrace N,l_1+l_2-1\rbrace}}
|C^{(N)l_3m_3}_{l_1m_1,l_2m_2}-C^{l_3m_3 }_{l_1m_1,l_2m_2}|^2|\frac{f_{l_1m_1}}{l_1(l_1+1)}g_{l_2m_2}|^2=\\
&\sum_{\substack{l_1<N^\alpha\\l_2<N^\beta\\ -l_1\leq m_1\leq l_1\\ -l_2\leq m_2\leq l_2}}
\sum_{\substack{l_3=|l_1-l_2|+1\\ -l_3\leq m_3\leq l_3}}^{\min\lbrace N,l_1+l_2-1\rbrace}
|C^{(N)l_3m_3 }_{l_1m_1,l_2m_2}-C^{l_3m_3 }_{l_1m_1,l_2m_2}|^2|\frac{f_{l_1m_1}}{l_1(l_1+1)}g_{l_2m_2}|^2+\\
&\sum_{\substack{N^\alpha\leq l_1<N\\l_2<N^\beta\\ -l_1\leq m_1\leq l_1\\ -l_2\leq m_2\leq l_2}}
\sum_{\substack{l_3=|l_1-l_2|+1\\ -l_3\leq m_3\leq l_3}}^{{\min\lbrace N,l_1+l_2-1\rbrace}}
|C^{(N)l_3m_3 }_{l_1m_1,l_2m_2}-C^{l_3m_3 }_{l_1m_1,l_2m_2}|^2|\frac{f_{l_1m_1}}{l_1(l_1+1)}g_{l_2m_2}|^2+\\
&\sum_{\substack{l_1<N^\alpha\\N^\beta\leq l_2<N\\ -l_1\leq m_1\leq l_1\\ -l_2\leq m_2\leq l_2}}
\sum_{\substack{l_3=|l_1-l_2|+1\\ -l_3\leq m_3\leq l_3}}^{{\min\lbrace N,l_1+l_2-1\rbrace}}
|C^{(N)l_3m_3 }_{l_1m_1,l_2m_2}-C^{l_3m_3 }_{l_1m_1,l_2m_2}|^2|\frac{f_{l_1m_1}}{l_1(l_1+1)}g_{l_2m_2}|^2+\\
&\sum_{\substack{N^\alpha\leq l_1\leq N\\N^\beta\leq l_2<N\\ -l_1\leq m_1\leq l_1\\-l_2\leq m_2\leq l_2}}
\sum_{\substack{l_3=|l_1-l_2|+1\\ -l_3\leq m_3\leq l_3}}^{{\min\lbrace N,l_1+l_2-1\rbrace}}
|C^{(N)l_3m_3 }_{l_1m_1,l_2m_2}-C^{l_3m_3 }_{l_1m_1,l_2m_2}|^2|\frac{f_{l_1m_1}}{l_1(l_1+1)}g_{l_2m_2}|^2,
\end{align*}
where $0<\alpha,\beta<1$ have to be determined.
Notice that the sum \[\sum_{\substack{l_3=|l_1-l_2|+1\\ -l_3\leq m_3\leq l_3}}^{{\min\lbrace N,l_1+l_2-1\rbrace}}\] has $\min\lbrace N,2l_1-2,2l_2-2\rbrace$ different values of $l_3$, which we repeatedly use in the following steps.

\textbf{Step 1.} For the first term,  we have to divide sum in two cases: $l_2<l_1\land N^\beta$ and $l_1\leq l_2\land N^\alpha$.
Using \cite[prop.\,14\,(1)]{FlPaVi2022}, we get
\begin{align*}
&\sum_{\substack{l_1<N^\alpha\\l_2<l_1\land N^\beta\\ -l_1\leq m_1\leq l_1\\-l_2\leq m_2\leq l_2}}
\sum_{\substack{l_3=|l_1-l_2|+1\\ -l_3\leq m_3\leq l_3}}^{{\min\lbrace N,l_1+l_2-1\rbrace}}
|C^{(N)l_3m_3 }_{l_1m_1,l_2m_2}-C^{l_3m_3 }_{l_1m_1,l_2m_2}|^2|\frac{f_{l_1m_1}}{l_1(l_1+1)}g_{l_2m_2}|^2\\
&\leq \frac{C}{N^4}\sum_{\substack{l_1<N^\alpha\\ -l_1\leq m_1\leq l_1}}
\sum_{\substack{l_2<l_1\\ -l_2\leq m_2\leq l_2}}\sum_{\substack{l_3=|l_1-l_2|+1\\ -l_3\leq m_3\leq l_3}}^{{\min\lbrace N,l_1+l_2-1\rbrace}}
l_1^{8}|\frac{f_{l_1m_1}}{l_1(l_1+1)}g_{l_2m_2}|^2\\
&\leq \frac{C}{N^4}\sum_{\substack{l_1<N^\alpha\\ -l_1\leq m_1\leq l_1}}
\sum_{\substack{l_2<l_1\\ -l_2\leq m_2\leq l_2}}
l_1^4l_2^2|f_{l_1m_1}g_{l_2m_2}|^2\\
&\leq \frac{C}{N^4}\left(\sum_{\substack{l_1<N^\alpha\\ -l_1\leq m_1\leq l_1}}
l_1^{2}|f_{l_1m_1}|\right)^2\|g\|^2_1
\\
&\leq \frac{C}{N^4}\sum_{l_1<N^\alpha}
l_1^{2\eta+1}\|f\|^2_{2-\eta}\|g\|^2_1
\\
&\leq C N^{2\alpha(\eta+1)-4}\|f\|^2_{2-\eta}\|g\|^2_1,
\end{align*}
Hence, choosing $\eta=2$, we get that we can take $\alpha<2/3$ and the best bound is:
\[
CN^{4(3/2\alpha-1)}\|f\|^2_0\|g\|^2_1.
\]
For the case $l_1\leq l_2\land N^\alpha$, we proceed analogously:
\begin{align*}
&\frac{C}{N^4}\sum_{\substack{l_2<N^\beta\\l_1\leq l_2\land N^\alpha\\ -l_1\leq m_1\leq l_1\\ -l_2\leq m_2\leq l_2}}
\sum_{\substack{l_3=|l_1-l_2|+1\\ -l_3\leq m_3\leq l_3}}^{{\min\lbrace N,l_1+l_2-1\rbrace}}
l_2^{8}|\frac{f_{l_1m_1}}{l_1(l_1+1)}g_{l_2m_2}|^2\\
&\leq \frac{C}{N^4}\sum_{\substack{l_2<N^\beta\\ -l_2\leq m_2\leq l_2}}
\sum_{\substack{l_1\leq l_2\\ -l_1\leq m_1\leq l_1}}
l_2^{8}l_1^2|\frac{f_{l_1m_1}}{l_1(l_1+1)}g_{l_2m_2}|^2\\
&\leq \frac{C}{N^4}\left(\sum_{\substack{l_2<N^\beta\\ -l_2\leq m_2\leq l_2}}
l_2^{4}|g_{l_2m_2}|\right)^2\|f\|^2_{-1}
\\
&\leq \frac{C}{N^4}\sum_{l_2<N^\beta}
l_2^{2\eta+1}\|g\|^2_{4-\eta}\|f\|^2_{-1}
\\
&\leq C N^{2\beta(\eta+1)-4}\|g\|_{4-\eta}^2\|f\|_{-1}^2.
\end{align*}
Hence, choosing $\eta=4$, we get that we can take $\beta<2/5$ and the bound is:
\[
CN^{4(3\beta-1)}\|f\|_{-1}^2\|g\|^2_0.
\]
Therefore, we have that the first term is bounded by:
\begin{align*}
&\sum_{\substack{l_1<N^\alpha\\l_2<N^\beta\\ -l_1\leq m_1\leq l_1\\ -l_2\leq m_2\leq l_2}}
\sum_{\substack{l_3=\\|l_1-l_2|+1\\ -l_3\leq m_3\leq l_3}}^{\substack{\min\lbrace N,\\l_1+l_2-1\rbrace}}
|C^{(N)l_3m_3 }_{l_1m_1,l_2m_2}-C^{l_3m_3 }_{l_1m_1,l_2m_2}|^2|\frac{f_{l_1m_1}}{l_1(l_1+1)}g_{l_2m_2}|^2\\
&\leq 
\dfrac{C(\alpha)}{N^{4(1-3/2\alpha)}}\|f\|^2_{0}\|g\|^2_{1}+\dfrac{C(\beta)}{N^{4(1-5/2\beta)}}\|f\|^2_{-1}\|g\|^2_{0},
\end{align*}
for any $\alpha<2/3$, $\beta<2/5$.

\textbf{Step 2.} For the second term, we proceed analogously, using the result in \cite[prop.\,14\,(2)]{FlPaVi2022}.
Since, $\beta<\alpha$, we always have $l_2<l_1$. Therefore,
\begin{align*}
&\sum_{\substack{N^\alpha<l_1<N\\l_2<N^\beta\\ -l_1\leq m_1\leq l_1\\ -l_2\leq m_2\leq l_2}}
\sum_{\substack{l_3=\\|l_1-l_2|+1\\ -l_3\leq m_3\leq l_3}}^{\substack{\min\lbrace N,\\l_1+l_2-1\rbrace}}
|C^{(N)l_3m_3 }_{l_1m_1,l_2m_2}-C^{l_3m_3 }_{l_1m_1,l_2m_2}|^2|\frac{f_{l_1m_1}}{l_1(l_1+1)}g_{l_2m_2}|^2\\	
&\leq {C}\sum_{\substack{N^\alpha<l_1<N\\ -l_1\leq m_1\leq l_1}}
\sum_{\substack{l_2<N^\beta\\ -l_2\leq m_2\leq l_2}}\sum_{\substack{l_3=|l_1-l_2|+1\\ -l_3\leq m_3\leq l_3}}^{{\min\lbrace N,l_1+l_2-1\rbrace}}
l_1^{4}|\frac{f_{l_1m_1}}{l_1(l_1+1)}g_{l_2m_2}|^2\\
&\leq {C}\sum_{\substack{N^\alpha<l_1<N\\ -l_1\leq m_1\leq l_1}}
\sum_{\substack{l_2<N^\beta\\ -l_2\leq m_2\leq l_2}}
l_2^2|f_{l_1m_1}g_{l_2m_2}|^2\\
&\leq {C}\sum_{\substack{N^\alpha<l_1<N\\ -l_1\leq m_1\leq l_1}}
|f_{l_1m_1}|^2\|g\|_1^2\\
&\leq {C}\min\left\lbrace \sum_{\substack{N^\alpha<l_1<N\\ -l_1\leq m_1\leq l_1}}l_1^{-2-2\varepsilon}
\|f\|_{1+\varepsilon}^2 , \textit{o}(1)\|f\|_0^2\right\rbrace\|g\|_1^2\\
&\leq {C}\min\left\lbrace N^{-2\alpha\varepsilon}
\|f\|_{1+\varepsilon}^2, \textit{o}(1)\|f\|_0^2\right\rbrace\|g\|_1^2,
\end{align*}
for some $\varepsilon>0$ and $\textit{o}(1)\rightarrow 0$, for $N\rightarrow\infty$.
Therefore, we have that the second term is bounded by:
\begin{align*}
&\sum_{\substack{N^\alpha<l_1<N\\l_2<N^\beta\\ -l_1\leq m_1\leq l_1\\ -l_2\leq m_2\leq l_2}}
\sum_{\substack{l_3=\\|l_1-l_2|+1\\ -l_3\leq m_3\leq l_3}}^{\substack{\min\lbrace N,\\l_1+l_2-1\rbrace}}
|C^{(N)l_3m_3 }_{l_1m_1,l_2m_2}-C^{l_3m_3 }_{l_1m_1,l_2m_2}|^2|\frac{f_{l_1m_1}}{l_1(l_1+1)}g_{l_2m_2}|^2\\
&\leq 
\min\left\lbrace \frac{C(\alpha,\varepsilon)}{N^{2\alpha\varepsilon}}
\|f\|_{1+\varepsilon}^2,C(\alpha) \textit{o}(1)\|f\|_0^2\right\rbrace\|g\|_1^2,
\end{align*}
for any $\alpha<2/3$, $\beta<2/5$ and some $\varepsilon>0$.

\textbf{Step 3.} For the third term, we proceed by dividing the cases for which $l_2<l_1$ and $l_1\leq l_2 \land N^\alpha$. 
When $l_2<l_1$, we can repeat the same calculations of Step 1, getting the same bound.
When $l_1\leq l_2 \land N^\alpha$, we consider
\begin{align*}
&\sum_{\substack{N^\beta<l_2<N\\l_1\leq l_2\land N^\alpha\\ -l_2\leq m_2\leq l_2\\-l_1\leq m_1\leq l_1}}
\sum_{\substack{l_3=|l_1-l_2|+1\\ -l_3\leq m_3\leq l_3}}^{{\min\lbrace N,l_1+l_2-1\rbrace}}
|C^{(N)l_3m_3 }_{l_1m_1,l_2m_2}-C^{l_3m_3 }_{l_1m_1,l_2m_2}|^2|\frac{f_{l_1m_1}}{l_1(l_1+1)}g_{l_2m_2}|^2\\	
&\leq {C}\sum_{\substack{N^\beta<l_2<N\\ -l_2\leq m_2\leq l_2}}
\sum_{\substack{l_1<l_2\\ -l_1\leq m_1\leq l_1}}\sum_{\substack{l_3=|l_1-l_2|+1\\ -l_3\leq m_3\leq l_3}}^{{\min\lbrace N,l_1+l_2-1\rbrace}}
l_2^{4}|\frac{f_{l_1m_1}}{l_1(l_1+1)}g_{l_2m_2}|^2\\
&\leq {C}\sum_{\substack{N^\beta<l_2<N\\ -l_2\leq m_2\leq l_2}}
\sum_{\substack{l_1<l_2\\ -l_1\leq m_1\leq l_1}}
l_2^{4}l_1^2|\frac{f_{l_1m_1}}{l_1(l_1+1)}g_{l_2m_2}|^2\\
&\leq C \sum_{\substack{N^\beta<l_2<N\\ -l_2\leq m_2\leq l_2}}
l_2^{4}|g_{l_2m_2}|^2\|f\|^2_{-1}
\\
&\leq C \left(\sum_{\substack{N^\beta<l_2<N\\ -l_2\leq m_2\leq l_2}}
l_2^{2}|g_{l_2m_2}|\right)^2\|f\|^2_{-1}
\\
&\leq C \sum_{\substack{N^\beta<l_2<N\\ -l_2\leq m_2\leq l_2}}
l_2^{-2-2\varepsilon}\|f\|^2_{-1}\|g\|^2_{3+\varepsilon}
\\
&\leq C N^{-2\beta\varepsilon}\|f\|^2_{-1}\|g\|^2_{3+\varepsilon}.
\end{align*}
Therefore, the third term admits the bound
\begin{align*}
&\sum_{\substack{l_1<N^\alpha\\ N^\beta<l_2<N\\-l_2\leq m_2\leq l_2\\-l_1\leq m_1\leq l_1}}
\sum_{\substack{l_3=|l_1-l_2|+1\\ -l_3\leq m_3\leq l_3}}^{{\min\lbrace N,l_1+l_2-1\rbrace}}
|C^{(N)l_3m_3 }_{l_1m_1,l_2m_2}-C^{l_3m_3 }_{l_1m_1,l_2m_2}|^2|\frac{f_{l_1m_1}}{l_1(l_1+1)}g_{l_2m_2}|^2\\
&\leq 
\dfrac{C(\alpha)}{N^{4(1-3/2\alpha)}}\|f\|^2_{0}\|g\|^2_{1}+\dfrac{C(\beta,\varepsilon)}{N^{2\beta\varepsilon}}\|f\|^2_{-1}\|g\|^2_{3+\varepsilon},
\end{align*}
for $\alpha<2/3$, $\beta<2/5$, and $\varepsilon>0$.

\textbf{Step 4.} For the fourth term, we proceed by dividing the cases for which $l_2<l_1$ and $l_1\leq l_2$. 
In both cases, we get the same bounds as obtained in Step 2 and 3.
Therefore, the fourth term admits the bound
\begin{align*}
&\sum_{\substack{N^\alpha\leq l_1\leq N\\N^\beta\leq l_2<N \\ -l_1\leq m_1\leq l_1\\-l_2\leq m_2\leq l_2}}
\sum_{\substack{l_3=|l_1-l_2|+1\\ -l_3\leq m_3\leq l_3}}^{{\min\lbrace N,l_1+l_2-1\rbrace}}
|C^{(N)l_3m_3 }_{l_1m_1,l_2m_2}-C^{l_3m_3 }_{l_1m_1,l_2m_2}|^2|\frac{f_{l_1m_1}}{l_1(l_1+1)}g_{l_2m_2}|^2\\
&\leq \min\left\lbrace \frac{C(\alpha,\varepsilon)}{N^{2\alpha\varepsilon}}
\|f\|_{1+\varepsilon}^2,C(\alpha) \textit{o}(1)\|f\|_0^2\right\rbrace\|g\|_1^2+\\
&\dfrac{C(\alpha)}{N^{4(1-3/2\alpha)}}\|f\|^2_{0}\|g\|^2_{1}+\dfrac{C(\beta,\varepsilon)}{N^{2\beta\varepsilon}}\|f\|^2_{-1}\|g\|^2_{3+\varepsilon},
\end{align*}
for $\alpha<2/3$, $\beta<2/5$, and $\varepsilon>0$.
\endproof

We now obtain $L^2$ convergence of the quantized bracket as $N\to\infty$.
\begin{corollary}[Spatial convergence]\label{thm:consistency}
In the framework of Lemma~\ref{thm:La-conv}, we have that:
\begin{align*}
&\|T_N^*[T_N\Delta^{-1}f,T_Ng]_N-\lbrace\Delta^{-1}f,g\rbrace\|_0\leq  \\
&\dfrac{C(\alpha)}{N^{2(1-3/2\alpha)}}\|f\|_{0}\|g\|_{1}
+\dfrac{C(\beta)}{N^{2(1-5/2\beta)}}\|f\|_{0}\|g\|_{0}+\\
&\min\left\lbrace \frac{C(\alpha,\varepsilon)}{N^{\alpha\varepsilon}}
\|f\|_{1+\varepsilon},C(\alpha) \textit{o}(1)\|f\|_0\right\rbrace\|g\|_1+\dfrac{C(\beta,\varepsilon)}{N^{\beta\varepsilon}}\|f\|_{0}\|g\|_{3+\varepsilon},
\end{align*}
for some $\varepsilon>0$, $0<\alpha<2/3$ and $0<\beta<2/5$.
\end{corollary}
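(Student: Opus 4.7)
The plan is to reduce the corollary to Lemma~\ref{thm:La-conv} by adding and subtracting a copy of $T_N^* T_N \{\Delta^{-1}f,g\}$, and then to use two basic facts about $T_N^*$: (i) it is (up to an absorbable constant) an $L^2$-isometry from the scaled Frobenius space $(\mathfrak{u}(N),\|\cdot\|_0)$ onto its image in $C^\infty(\Ss^2)$, as recorded in the proof of Theorem~\ref{thm:spectral_measure_convergence}, and (ii) $T_N^* T_N$ is the $L^2$-projection onto the span of spherical harmonics of degree $\leq N-1$, because $T_N T_N^* = \mathrm{id}_{\mathfrak{u}(N)}$ (so $T_N^*T_N$ is idempotent) and $T_N$ is built diagonal-by-diagonal from the spherical harmonic expansion as in Lemma~\ref{lem:decomposition}.

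Concretely, I would write
\[
T_N^*[T_N\Delta^{-1}f,T_Ng]_N-\{\Delta^{-1}f,g\}
= T_N^*\bigl([T_N\Delta^{-1}f,T_Ng]_N - T_N\{\Delta^{-1}f,g\}\bigr)
+ (T_N^*T_N - I)\{\Delta^{-1}f,g\},
\]
and estimate the two summands separately. For the first summand, the isometry property gives
\[
\bigl\|T_N^*\bigl([T_N\Delta^{-1}f,T_Ng]_N - T_N\{\Delta^{-1}f,g\}\bigr)\bigr\|_{L^2}
\;\lesssim\; \bigl\|[T_N\Delta^{-1}f,T_Ng]_N - T_N\{\Delta^{-1}f,g\}\bigr\|_0,
\]
and the right-hand side is bounded directly by Lemma~\ref{thm:La-conv}, which produces the four stated terms on the right of the corollary.

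For the projection remainder $(T_N^*T_N - I)\{\Delta^{-1}f,g\}$, I would use the elementary tail bound
\[
\|(I - T_N^*T_N)h\|_0 \leq N^{-s}\|h\|_s \qquad (h\in H^s(\Ss^2),\; s>0),
\]
applied to $h=\{\Delta^{-1}f,g\}=\nabla^\bot(\Delta^{-1}f)\cdot\nabla g$. Since $\Delta^{-1}f\in H^2$ when $f\in L^2$, and $g\in H^{3+\varepsilon}$, a two-dimensional Sobolev multiplication estimate (valid because $H^{2+\varepsilon}$ is a pointwise multiplier on $H^1$ on $\Ss^2$) gives $\|h\|_1 \lesssim \|f\|_0\,\|g\|_{3+\varepsilon}$. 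Hence the remainder is at most $C N^{-1}\|f\|_0\|g\|_{3+\varepsilon}$, which is dominated by the $N^{-\beta\varepsilon}\|f\|_0\|g\|_{3+\varepsilon}$ term already present in the Lemma's bound (for any admissible $\beta<2/5$ and $\varepsilon>0$, since $\beta\varepsilon<1$ is easily arranged), and can be absorbed into the constant $C(\beta,\varepsilon)$.

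The main obstacle is really only bookkeeping: one must verify that the projection remainder is genuinely dominated by the lemma's slowest-decaying term across the admissible range of $\alpha,\beta,\varepsilon$, and check that the isometry constant relating $\|T_N^*\cdot\|_{L^2}$ to the scaled Frobenius $\|\cdot\|_0$ does not degrade with $N$. Both are immediate from the constructions of Section~\ref{sec:quantization-and-EZ}, so no new analytic input is needed beyond Lemma~\ref{thm:La-conv}.
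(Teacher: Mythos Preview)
Your decomposition is identical to the paper's: split into the Lemma~\ref{thm:La-conv} term (via the $L^2$-isometry of $T_N^*$) plus the projection remainder $(I-T_N^*T_N)\{\Delta^{-1}f,g\}$. The difference lies only in how the remainder is bounded. The paper expands $\{\Delta^{-1}f,g\}$ in spherical harmonics, invokes a pointwise bound on the Poisson structure constants $C^{l_3m_3}_{l_1m_1,l_2m_2}$ from \cite{FlPaVi2022}, and obtains $\|(I-T_N^*T_N)\{\Delta^{-1}f,g\}\|_0\le C(\gamma)N^{-\gamma}\|f\|_0\|g\|_{3+\gamma}$ for any $\gamma>0$, then sets $\gamma=\beta\varepsilon$. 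Your route---spectral tail bound plus the Sobolev product rule $H^1\cdot H^{2+\varepsilon}\hookrightarrow H^1$ on $\Ss^2$---is cleaner and avoids the structure-constant machinery entirely, at the cost of a fixed decay $N^{-1}$ rather than $N^{-\beta\varepsilon}$. This is harmless for the corollary as stated (``for some $\varepsilon$''), since you can arrange $\beta\varepsilon\le 1$; if one wanted the full range $\varepsilon>0$ as in the Lemma, the paper's computation (or taking $s=\beta\varepsilon$ in your tail bound together with a sharper product estimate) would be needed.
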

\proof
We have that:
\begin{align*}
 & \|T_N^*[T_N\Delta^{-1}f,T_Ng]_N-\lbrace\Delta^{-1}f,g\rbrace\|_0\\
 &=\|[T_N\Delta^{-1}f,T_Ng]_N-T_N\lbrace\Delta^{-1}f,g\rbrace\|_0+\|(Id-T_N^*T_N)\lbrace\Delta^{-1}f,g\rbrace\|_0.  
\end{align*}
The first term has been estimate in Lemma~\ref{thm:La-conv}, so we have to study only the residual term 
\begin{align*}
&\|(Id-T_N^*T_N)\lbrace\Delta^{-1}f,g\rbrace\|_0^2=\\
&\sum_{\substack{l_1>N\\ -l_1\leq m_1\leq l_1}}
\sum_{\substack{l_2>N\\ -l_2\leq m_2\leq l_2}}\sum_{\substack{l_3=|l_1-l_2|+1\\ -l_3\leq m_3\leq l_3}}^{l_1+l_2-1}
|C^{l_3m_3 }_{l_1m_1,l_2m_2}|^2|\frac{f_{l_1m_1}}{l_1(l_1+1)}g_{l_2m_2}|^2.
\end{align*}
Repeating the same calculations of Lemma~\ref{thm:La-conv}, dividing in the two cases $l_1<l_2$ and $l_2\leq l_1$, and using the result in \cite[Lemma 12]{FlPaVi2022}, we get that:
\begin{align*}
&\sum_{\substack{l_1>N\\ -l_1\leq m_1\leq l_1}}
\sum_{\substack{l_2>N\\ -l_2\leq m_2\leq l_2}}\sum_{\substack{l_3=|l_1-l_2|+1\\ -l_3\leq m_3\leq l_3}}^{l_1+l_2-1}
|C^{l_3m_3 }_{l_1m_1,l_2m_2}|^2|\frac{f_{l_1m_1}}{l_1(l_1+1)}g_{l_2m_2}|^2\\
&=\sum_{\substack{l_1>N\\ -l_1\leq m_1\leq l_1}}
\sum_{\substack{l_2>l_1\\ -l_2\leq m_2\leq l_2}}\sum_{\substack{l_3=|l_1-l_2|+1\\ -l_3\leq m_3\leq l_3}}^{l_1+l_2-1}
|C^{l_3m_3 }_{l_1m_1,l_2m_2}|^2|\frac{f_{l_1m_1}}{l_1(l_1+1)}g_{l_2m_2}|^2+\\
&\sum_{\substack{l_1>N\\ -l_1\leq m_1\leq l_1}}
\sum_{\substack{N<l_2\leq l_1\\ -l_2\leq m_2\leq l_2}}\sum_{\substack{l_3=|l_1-l_2|+1\\ -l_3\leq m_3\leq l_3}}^{l_1+l_2-1}
|C^{l_3m_3 }_{l_1m_1,l_2m_2}|^2|\frac{f_{l_1m_1}}{l_1(l_1+1)}g_{l_2m_2}|^2\\
&\leq\sum_{\substack{l_1>N\\ -l_1\leq m_1\leq l_1}}
\sum_{\substack{l_2>l_1\\ -l_2\leq m_2\leq l_2}}l_1^2 l_2^4 |\frac{f_{l_1m_1}}{l_1(l_1+1)}g_{l_2m_2}|^2+\\
&\sum_{\substack{l_1>N\\ -l_1\leq m_1\leq l_1}}
\sum_{\substack{N<l_2\leq l_1\\ -l_2\leq m_2\leq l_2}}l_1^4 l_2^2|\frac{f_{l_1m_1}}{l_1(l_1+1)}g_{l_2m_2}|^2\\
&\leq \dfrac {C(\gamma)}{N^\gamma}\lbrace \|f\|^2_{-1}\|g\|_{3+\gamma}^2+\|f\|^2_{0}\|g\|_{2+\gamma}^2\rbrace\\
&\leq \dfrac {C(\gamma)}{N^\gamma}\|f\|^2_{0}\|g\|_{3+\gamma}^2,
\end{align*}
for some $\gamma>0$.
Hence, the residual term $\|(Id-T_N^*T_N)\lbrace\Delta^{-1}f,g\rbrace\|_0$ does not worsen the bound of Lemma~\ref{thm:La-conv}, for $\gamma=\beta\varepsilon$.
\endproof

The last ingredient for the proof of the convergence is the following:
\begin{proposition}\label{prop:bracket_SH}
$\displaystyle
\|T_N\lbrace\Delta^{-1}f,g\rbrace\|_0
\leq C\|f\|_{-1}\|\nabla g\|_{L^\infty}.
$
\end{proposition}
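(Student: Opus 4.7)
The plan is to reduce the estimate to a direct pointwise bound on the Poisson bracket combined with the fact that $T_N$ is a contraction from $L^2$ to the scaled Frobenius norm. Specifically, because $T_N^*$ is a right inverse of $T_N$, the composition $T_N^* T_N$ is the $L^2$-orthogonal projection onto the span of spherical harmonics up to order $N-1$, so $\|T_N h\|_0^2 = \langle T_N^* T_N h, h\rangle_{L^2} \leq \|h\|_{L^2}^2$ for every $h\in L^2(\Ss^2)$. Consequently, it suffices to establish the inequality with $\|T_N\{\Delta^{-1}f,g\}\|_0$ replaced by $\|\{\Delta^{-1}f,g\}\|_{L^2}$.

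To bound the $L^2$ norm of the bracket, set $\psi = \Delta^{-1} f$ so that $\{\psi,g\} = \nabla^\perp\psi\cdot\nabla g$ pointwise. Applying the Cauchy--Schwarz inequality pointwise in the tangent space and then Hölder in the $\mu$-integral yields
\begin{equation*}
    \|\{\Delta^{-1}f,g\}\|_{L^2} \leq \|\nabla\psi\|_{L^2}\,\|\nabla g\|_{L^\infty}.
\end{equation*}
The factor $\|\nabla\psi\|_{L^2}$ is precisely the $H^{-1}$ norm of $f$: integration by parts on $\Ss^2$ gives $\|\nabla\psi\|_{L^2}^2 = -\langle \psi,\Delta\psi\rangle_{L^2} = -\langle \Delta^{-1}f,f\rangle_{L^2} = \|f\|_{-1}^2$ (using that $\psi$ and $f$ have vanishing mean and $-\Delta$ is positive on $C^\infty_0(\Ss^2)$).

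Combining the two steps gives $\|T_N\{\Delta^{-1}f,g\}\|_0 \leq \|f\|_{-1}\|\nabla g\|_{L^\infty}$, with $C=1$ for the natural normalization of the $L^2$-pairing inducing $\|\cdot\|_0$ (otherwise an absolute constant appears from the $\tfrac{1}{2\pi}$ in the scaled Frobenius inner product). I do not anticipate a serious obstacle: the only technical point is verifying that $T_N$ is indeed a contraction in the stated norms, which follows directly from the orthogonal-projection property of $T_N^* T_N$ noted in Section~\ref{sec:quantization-and-EZ}.
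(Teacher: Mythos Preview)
Your proof is correct and essentially identical to the paper's: both use the contraction property of $T_N$ in the $L^2$/scaled Frobenius norms, then bound $\lVert\{\Delta^{-1}f,g\}\rVert_{L^2}$ pointwise via $|\nabla\Delta^{-1}f\times\nabla g|\leq |\nabla\Delta^{-1}f|\,\lVert\nabla g\rVert_{L^\infty}$ and identify $\lVert\nabla\Delta^{-1}f\rVert_{L^2}$ with $\lVert f\rVert_{-1}$. Your justification of the contraction step via the orthogonal-projection property of $T_N^*T_N$ is a bit more explicit than the paper's, but the argument is the same.
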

\proof
\begin{align*}
\|T_N\lbrace\Delta^{-1}f,g\rbrace\|_0^2
&\leq \|\lbrace\Delta^{-1}f,g\rbrace\|_0^2\\
&=\int |\nabla\Delta^{-1}f\times\nabla g|^2\\\
&\leq \int |\nabla\Delta^{-1}f|^2\|\nabla g\|_{L^\infty}^2\\
&\leq C\|f\|_{-1}^2\|\nabla g\|_{L^\infty}^2.
\end{align*}
\endproof
\begin{theorem}[Space-time convergence]\label{thm:space-time-convergence-detailed}
Let $\omega=\omega(t,x)$ and $W=W(t)$ be the solutions to the Euler equations \eqref{eq:vorteq2} and Euler--Zeitlin equations \eqref{eq:zeitlin_model}, with initial data $\omega(0)\in H^{3+\varepsilon}(\mathbb{S}^2)$ for some $\varepsilon>0$ and $W(0)\in\su(N)$. 
Let $\omega_N(t):=T_N^*W(t)$.
Then, for any $t\geq 0$, and any $\varepsilon>0$, $0<\alpha<2/3$ and $0<\beta<2/5$, there exist positive constants $C(\alpha), C(\beta), C(\alpha,\varepsilon), C(\beta,\varepsilon)$ such that
\begin{align*}
 &\| \omega(t) - \omega_N(t) \|_0\leq  \Bigg\lbrace\| \omega(0) - \omega_N(0) \|_0+\\
 &\|\omega(0)\|_{0}\int_0^t\dfrac{C(\alpha)}{N^{2(1-3/2\alpha)}}\|\omega(s)\|_{1} ds+\dfrac{t \, C(\beta)}{N^{2(1-5/2\beta)}}\|\omega(0)\|_{0}^2+\\
&\dfrac{C(\alpha,\varepsilon)}{N^{\alpha\varepsilon}}\int_0^t\|\omega(s)\|_{1+\varepsilon}\|\omega(s)\|_{1}ds+\dfrac{C(\beta,\varepsilon)}{N^{\beta\varepsilon}}\|\omega(0)\|_{0}\int_0^t\|\omega(s)\|_{3+\varepsilon}ds\Bigg\rbrace\cdot\\
&\exp\Bigg\lbrace \int_0^t\dfrac{C(\alpha)}{N^{2(1-3/2\alpha)}}\|\omega(s)\|_{1}ds+\dfrac{t\, C(\beta)}{N^{2(1-5/2\beta)}}\|\omega(0)\|_{0}+\\
&\int_0^tC(\alpha)\|\omega(s)\|_{1} +\dfrac{C(\beta,\varepsilon)}{N^{\beta\varepsilon}}\|\omega(s)\|_{3+\varepsilon}    +\|\nabla\omega(s)\|_{L^\infty}ds\Bigg\rbrace .
\end{align*}
\end{theorem}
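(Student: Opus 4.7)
The plan is to derive a Gronwall inequality
\[
\tfrac{d}{dt}\|e\|_0 \le S_N(t) + K_N(t)\|e\|_0
\]
for $e := \omega - \omega_N$, where $S_N(t)$ matches the integrand of the bracketed prefactor in the conclusion and $K_N(t)$ matches the integrand of the exponent, then integrate. The crux is to involve only norms of $\omega$ on the right-hand side, since higher Sobolev norms of $\omega_N$ are not a priori controlled.

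Applying $T_N^*$ to the Euler--Zeitlin system, together with $T_N\Delta^{-1}=\Delta_N^{-1}T_N$ and $T_NT_N^*=\operatorname{id}$, gives $\dot\omega_N = -T_N^*[T_N\Delta^{-1}\omega_N, T_N\omega_N]_N$. Subtracting this from the Euler equation, adding and subtracting $T_N^*[T_N\Delta^{-1}\omega, T_N\omega]_N$, and using bilinearity of the commutator yields
\[
\dot e = C_N^*(\omega) - T_N^*[T_N\Delta^{-1}e, T_N\omega_N]_N - T_N^*[T_N\Delta^{-1}\omega, T_N e]_N,
\]
where $C_N^*(\omega) := T_N^*[T_N\Delta^{-1}\omega, T_N\omega]_N - \{\Delta^{-1}\omega, \omega\}$ is exactly the consistency error bounded by Corollary~\ref{thm:consistency} with $f = g = \omega$.

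Pairing with $e$ in $L^2$ and exploiting the trace identity $\operatorname{tr}(B[A,B]) = 0$ twice is the key step. First, the third term becomes $\langle T_N e,[T_N\Delta^{-1}\omega, T_N e]_N\rangle_N$ and vanishes immediately with $B=T_N e$. Second, substituting $T_N\omega_N = T_N\omega - T_N e$ in the middle term produces a residual $[T_N\Delta^{-1}e, T_N e]$ piece that vanishes by the same identity. Writing $T_N^*[T_N\Delta^{-1}e, T_N\omega]_N = \{\Delta^{-1}e,\omega\} + \tilde D_N(e,\omega)$, with $\tilde D_N$ the function-space consistency error of Corollary~\ref{thm:consistency}, I arrive at
\[
\tfrac{1}{2}\tfrac{d}{dt}\|e\|_0^2 = \langle e, C_N^*(\omega)\rangle - \langle e, \{\Delta^{-1}e,\omega\}\rangle - \langle e, \tilde D_N(e,\omega)\rangle,
\]
involving only $\omega$ and $e$. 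The transport-like middle inner product is bounded by $C\|e\|_0^2\|\nabla\omega\|_{L^\infty}$ via the proof of Proposition~\ref{prop:bracket_SH} together with the Poincar\'e bound $\|\nabla\Delta^{-1}e\|_0 \lesssim \|e\|_0$, valid since $e$ has zero mean on $\Ss^2$. For the other two I apply Corollary~\ref{thm:consistency} twice: with $(f,g)=(\omega,\omega)$ to estimate $\|C_N^*(\omega)\|_0$, and with $(f,g)=(e,\omega)$ -- using the $o(1)\|f\|_0$ alternative inside the $\min$ -- to estimate $\|\tilde D_N(e,\omega)\|_0 \le \|e\|_0$ times the remaining $N$-dependent integrands of the exponent. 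Enstrophy conservation $\|\omega(s)\|_0 = \|\omega(0)\|_0$ then produces a term-by-term match with $S_N$ and $K_N$ as written in the statement, and Gronwall delivers the result.

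The main obstacle is structural rather than analytical: the two trace-cyclicity cancellations are precisely what eliminate every dependence on Sobolev norms of $\omega_N$. Any naive splitting -- e.g.\ attempting to bound $\|\tilde D_N(\omega,e)\|_0$ or $\|\{\Delta^{-1}e,\omega_N\}\|_0$ via Corollary~\ref{thm:consistency} directly -- leaves $\|e\|_1$, $\|e\|_{3+\varepsilon}$, or higher Sobolev norms of $\omega_N$ on the right, and the Gronwall inequality fails to close. The substitution $T_N\omega_N = T_N\omega - T_N e$ combined with $\operatorname{tr}(B[A,B])=0$ is the manoeuvre that makes the estimate close purely in $\omega$.
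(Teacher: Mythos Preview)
Your proposal is correct and follows essentially the same route as the paper: derive an $L^2$ energy estimate for the error, exploit the bi-invariance identity $\langle B,[A,B]\rangle=0$ to eliminate every term that would demand control of higher Sobolev norms of $\omega_N$ or $e$, then invoke the bracket-consistency estimate (Lemma~\ref{thm:La-conv}/Corollary~\ref{thm:consistency}), Proposition~\ref{prop:bracket_SH}, enstrophy conservation, and Gr\"onwall.

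The differences are purely organizational. The paper performs a single cancellation with $A=\Delta_N^{-1}W$ and $B=W-T_N\omega$ before splitting, whereas you split first and then apply the identity twice with $B=T_Ne$; algebraically these yield the identical residual $\langle e,\,T_N^*[T_N\Delta^{-1}e,T_N\omega]_N\rangle$. For this residual the paper adds and subtracts the \emph{projected} bracket $T_N^*T_N\{\Delta^{-1}T_N^*T_N e,\,T_N^*T_N\omega\}$ and invokes Lemma~\ref{thm:La-conv} and Proposition~\ref{prop:bracket_SH} on the projected functions, while you add and subtract the full bracket $\{\Delta^{-1}e,\omega\}$ and invoke Corollary~\ref{thm:consistency} directly; your choice is arguably cleaner, since Proposition~\ref{prop:bracket_SH} then lands on $\|\nabla\omega\|_{L^\infty}$ rather than $\|\nabla T_N^*T_N\omega\|_{L^\infty}$. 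Your use of the $o(1)\|f\|_0$ branch of the $\min$ in Corollary~\ref{thm:consistency} is exactly what produces the $C(\alpha)\|\omega(s)\|_1$ term in the exponent, matching the stated bound.
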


\proof Following \cite{Ga2002}, we consider:
\begin{equation}\label{eq:time_derivative}
\begin{array}{ll}
     & \dfrac{1}{2}\dfrac{d}{dt}\|\omega_N(t)-\omega(t)\|_0^2= \\
     & = \langle \omega_N(t)-\omega(t), T_N^*[\Delta^{-1}_NW(t),W(t)]_N-\lbrace\Delta^{-1}\omega(t),\omega(t)\rbrace\rangle.
\end{array}
\end{equation}
From the bi-invariance of the (scaled) Frobenius inner product on $\mathfrak{u}(N)$, we get
\begin{align*}
0 =&\langle W(t)-T_N\omega(t), [\Delta^{-1}_NW(t),W(t)-T_N\omega(t)]_N\rangle \\
=&\langle T_N(\omega_N(t)-\omega(t)), [\Delta^{-1}_NW(t),W(t)-T_N\omega(t)]_N\rangle \\
=&\langle \omega_N(t)-\omega(t), T_N^*[\Delta^{-1}_NW(t),W(t)-T_N\omega(t)]_N\rangle. 
\end{align*}
Therefore, equation \eqref{eq:time_derivative} can be written as:
\begin{align*}
&\dfrac{1}{2}\dfrac{d}{dt}\|\omega_N(t)-\omega(t)\|_{0}^2= \\
&= \langle \omega_N(t)-\omega(t), T_N^*[\Delta^{-1}_NW(t),T_N\omega(t)]_N-\lbrace\Delta^{-1}\omega(t),\omega(t)\rbrace\rangle
\\
&=\langle \omega_N(t)-\omega(t), T_N^*[\Delta^{-1}_N T_N\omega(t),T_N\omega(t)]_N-\lbrace\Delta^{-1}\omega(t),\omega(t)\rbrace\rangle\\
&+\langle \omega_N(t)-\omega(t), T_N^*[\Delta^{-1}_N(W(t)-T_N\omega(t)),T_N\omega(t)]_N\rangle.
\end{align*}
The second term of the right-hand side can be written as
\begin{align*}
&\langle \omega_N(t)-\omega(t), T_N^*[\Delta^{-1}_N(W(t)-T_N\omega(t)),T_N\omega(t)]_N\rangle=\\
&=\langle \omega_N(t)-\omega(t), T_N^*[\Delta^{-1}_N(W(t)-T_N\omega(t)),T_N\omega(t)]_N\\
&-T_N^*T_N\lbrace\Delta^{-1}(\omega_N(t)-T_N^*T_N\omega(t)),T_N^*T_N\omega(t)\rbrace\rangle\\
&+\langle \omega_N(t)-\omega(t),T_N^*T_N\lbrace\Delta^{-1}(\omega_N(t)-T_N^*T_N\omega(t)),T_N^*T_N\omega(t)\rbrace\rangle
\end{align*}
From Lemma~\ref{thm:La-conv}, Corollary~\ref{thm:consistency} and Proposition~\ref{prop:bracket_SH}, we can then deduce that: 
\begin{align*}
&\dfrac{1}{2}\dfrac{d}{dt}\|\omega_N(t)-\omega(t)\|_{0}^2 \leq \|\omega_N(t)-\omega(t)\|_{0}\;\cdot \\
&\Big(\dfrac{C(\alpha)}{N^{2(1-3/2\alpha)}}\|\omega(t)\|_{0}\|\omega(t)\|_{1}+\dfrac{C(\beta)}{N^{2(1-5/2\beta)}}\|\omega(t)\|_{0}\|\omega(t)\|_{0} \\
&+\dfrac{C(\alpha,\varepsilon)}{N^{\alpha\varepsilon}}\|\omega(t)\|_{1+\varepsilon}\|\omega(t)\|_{1} + \dfrac{C(\beta,\varepsilon)}{N^{\beta\varepsilon}}\|\omega(t)\|_{0}\|\omega(t)\|_{3+\varepsilon}\Big) + \\
& \|\omega_N(t)-\omega(t)\|^2_0\;\cdot\left(\dfrac{C(\alpha)}{N^{2(1-3/2\alpha)}}\|\omega(t)\|_{1}+\right.\\
&\left.\dfrac{C(\beta)}{N^{2(1-5/2\beta)}}\|\omega(t)\|_{0}+{C(\alpha)}\|\omega(t)\|_{1}+
\dfrac{C(\beta,\varepsilon)}{N^{\beta\varepsilon}}\|\omega(t)\|_{3+\varepsilon}+\|\nabla\omega\|_{L^\infty}\right).
\end{align*}
The thesis then follows from the Gr\"onwall lemma.
\endproof

\endgroup

\section{Relation to Berezin-Toeplitz quantization}\label{appendix:line_bundles}

Since Dirac~\cite{Di1925} outlined quantization theory it has ramified in different mathematical directions.
In this paper, we obtain quantization via unitary representation theory, which perhaps is the most direct route when applicable.
Non-commutative geometry (\emph{cf.}~\cite{Co1994}) offers a broader viewpoint, and geometric quantization (\emph{cf.}~\cite{So1966}) yet another.
The various frameworks are in general non-equivalent and applicable under different conditions.
But for the sphere they coincide; it is somehow the archetype of quantization (in non-commutative geometry the ``fuzzy sphere'', and in geometric quantization the tautological complex line bundle over the Riemann sphere $\mathbb{C}P^1$, as we shall see).

Theorem~\ref{thm:quantization_limit} above was stated and proved by Charles and Polterovich~\cite{ChPo2018} in the framework of \emph{Berezin-Toeplitz quantization} (which is a type of geometric quantization, \emph{cf.}~\cite{Be1975,Le2018}).
The purpose of this section is to show how quantization of the sphere via representations, as in section~\ref{sec:quant-via-repr} above, corresponds to Berezin-Toeplitz quantization of the Riemann sphere.
This result is essentially contained in the original work of Berezin~\cite{Be1975}, and fully explained in the monograph by Le~Floch~\cite{Le2018}, to which we refer for details.

Let $M$ be a compact Kähler manifold.
The objective is to construct a mapping (the Berezin-Toeplitz quantization) between the space of smooth functions $C^\infty(M,\mathbb{C})$ and a space of operators on a finite dimensional quantum state space (i.e., the Hilbert space of ``wave-functions'').
State space is constructed as holomorphic sections of a \new{tensor} power of a Hermitian line bundle over $M$.
The construction may appear abstruse at first, but is straightforward in the simplest setting $M=\mathbb{C}P^1$.
As we shall see, this corresponds to quantization on $\Ss^2$ as presented in section~\ref{sec:quant-via-repr}.

\begin{figure}
    \includegraphics{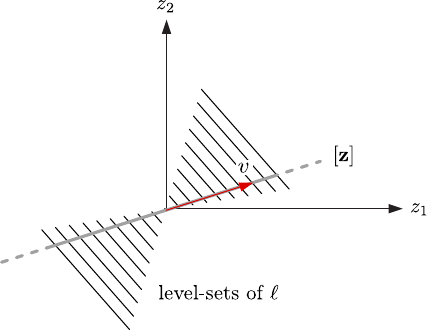}
    \caption{
    Illustration of the tautological line bundle $\mathcal{O}(-1)$.
    An element $([\mathbf z],v)\in \mathcal{O}(-1)$ consist of a one-dimensional complex linear subspace $[\mathbf z]\in \mathbb{C}P^1$ together with a vector in that subspace $v\in [\mathbf z]$.
    Thus, a linear form $\ell \in (\Cc^2)^*$ naturally pairs with elements of $\mathcal{O}(-1)$.}
    \label{fig:tautological}
\end{figure}

Recall that the Riemann sphere $\mathbb{C}P^1$ is the manifold of one-dimensional complex subspaces of $\mathbb{C}^2$.
It can be thought of as the quotient $(\mathbb{C}^2\backslash \{0 \})/\mathbb{C}_*\simeq \Ss^3/\Ss^1 \simeq \Ss^2$ equipped with the complex structure inherited from $\mathbb{C}^2$.
Since $\mathbb{C}^2$ admits a large space of holomorphic functions (e.g., any polynomial in $\mathbf z=(z_1,z_2)$), it is natural to suggest as state space the holomorphic functions descending to $\mathbb{C}P^1$.
However, the only such functions are the constants (since non-constant polynomials $p(z_1,z_2)$ cannot be invariant under the action of $\mathbb{C}_*$).
The remedy is to construct a complex vector bundle over $\mathbb{C}P^1$ and consider its holomorphic sections.

There is a natural vector bundle associated with the Riemann sphere: the fiber above $[\mathbf{z}]\in \mathbb{C}P^1$ is a vector $v\in [\mathbf{z}]$ of the co-set.
This is the \emph{tautological line bundle}, denoted $\mathcal{O}(-1)$ and illustrated in Fig.~\ref{fig:tautological}.
Since it is a sub-bundle of the trivial vector bundle $\mathbb{C}P^1\times \mathbb{C}^2$, it inherits the Hermitian inner product of~$\mathbb{C}^2$.
Furthermore, it is a holomorphic line bundle in the sense that it is a \new{holomorphic} manifold and the projection $\mathcal{O}(-1)\to\mathbb{C}P^1$ is a holomorphic mapping.


Consider now an element $p\in (\mathbb{C}^2)^*$, i.e., a linear function $p\colon \mathbb{C}^2\to \mathbb{C}$.
As illustrated in Fig.~\ref{fig:tautological}, there is a natural bundle pairing between $p$ and a smooth section $s \in C^\infty(\mathbb{C}P^1, \mathcal{O}(-1))$, namely
\begin{equation*}
    s([\mathbf{z}]) \mapsto p(s([\mathbf{z}])).
\end{equation*}
Thus, we can think of $p$ as a section of the dual bundle $\mathcal{O}(1) \equiv \mathcal{O}(-1)^*$.
Likewise, any section $s^*$ of $\mathcal{O}(1)$ gives rise to a function on $\Cc^2\backslash \{ 0\}$ given by $p(\mathbf{z}) = \langle s^*([\mathbf{z}]), \mathbf{z}\rangle_{[\mathbf{z}]}$.
The dual bundle $\mathcal{O}(1)$ is also a holomorphic line bundle, and if $s^*$ is a holomorphic section then $p(z_1,z_2) = a z_1 + b z_2$ for $a,b\in\mathbb{C}$.
(One can verify this by first observing that if $s^*$ is holomorphic, then $p(\mathbf{z})$ is holomorphic. Since $p$ is also 1-homogenous it must be a linear holomorphic function.)
On the other hand, the space of all smooth sections $C^\infty(\Cc P^1, \mathcal{O}(1))$ is, of course, infinite-dimensional and can act as the quantum state space via the Kostant-Souriau operator, which associates a smooth function $f\in C^\infty(\Cc P^1,\Rr)$ with the operator $\mathrm{KS}_1(f)\colon C^\infty(\Cc P^1, \mathcal{O}(1))\to C^\infty(\Cc P^1, \mathcal{O}(1))$ defined by
\begin{equation*}
    \mathrm{KS}_1(f) = f - i \nabla_{X_f}
\end{equation*}
where $f$ acts by multiplication, $\nabla$ is the connection associated with the Hermitian structure of the line bundle $\mathcal{O}(1)$ (i.e., the Chern connection), and $X_f$ is the Hamiltonian vector field for $f$.
The Kostant-Souriau operators fulfill the quantization condition\footnote{The Hermitian structure is scaled so the associated symplectic structure $\Omega$ fulfills $\int_{\Cc P^1}\Omega = 2\pi$.}
\begin{equation*}
    [\mathrm{KS}_1(f), \mathrm{KS}_1(g)] = -\new{i} \mathrm{KS}_1(\{f,g \}).
\end{equation*}

But the space $C^\infty\big(\Cc P^1, \mathcal{O}(1)\big)$ is inadequate as quantum state space: it is infinite-dimensional and admits far too many self-adjoint operators.
In contrast, the space of holomorphic sections of $\mathcal{O}(1)$ is too small (as it is given by $(\Cc^2)^*$).
We need something in-between.
The solution is to consider \new{the tensor product $\mathcal{O}(-k) \equiv \mathcal{O}(-1)^{\otimes k}$ and then take holomorphic sections of its dual bundle $\mathcal{O}(k) \equiv \mathcal{O}(-k)^*$.
Notice that $\mathcal{O}(k)$ is still a line bundle (since the fiber of $\mathcal{O}(-1)$ is one-dimensional), so this construction might appear fruitless.
But the point is that it admits more holomorphic sections.
Indeed, every homogenous polynomial $p\in \Cc_k[z_1,z_2]$ of degree $k$ gives rise to a $k$-linear form on $\mathcal{O}(-1)$ via
}
\begin{equation*}
    [\mathbf{z}]^k \ni (\lambda_1 \mathbf{z},\ldots,\lambda_k \mathbf{z}) \mapsto \lambda_1 \cdots \lambda_k p(\mathbf{z}).
\end{equation*}
Due to the homogeneity of $p$, this definition is independent of the representative $\mathbf{z} \in [\mathbf{z}]$.
\new{It defines a holomorphic section of $\mathcal{O}(k)$ (using that $k$-linear forms on $\mathcal{O}(-1)$ are naturally identified with sections of $(\mathcal{O}(-1)^{\otimes k})^* = \mathcal{O}(k)$).}
\new{Moreover, all holomorphic sections of $\mathcal{O}(k)$ are of this form, so} the space of homogenous polynomials $\Cc_k[z_1,z_2]$ is isomorphic to the space of holomorphic sections of $\mathcal{O}(k)$.

With this identification, consider the $L^2$ projection 
\begin{equation*}
\Pi\colon L^2(\Cc P^1, \mathcal{O}(k))\to \Cc_k[z_1,z_2]\qquad \text{(the Szegő projector.)}
\end{equation*}
The Berezin-Toeplitz operator for $f\in C^\infty(\Cc P^1,\Cc)$ is
\begin{equation*}
    \mathrm{BT}_{k}(f)\colon \Cc_{k}[z_1,z_2] \to \Cc_{k}[z_1, z_2], \quad \mathrm{BT}_{k}(f) = \Pi\circ f,
\end{equation*}
where $f$ acts by multiplication.
If $f$ is real-valued, then $\mathrm{BT}_k(f)$ is self-adjoint.

\new{Let $\mathrm{KS}_k(f)$ denote the generalization of $\mathrm{KS}_1(f)$ to the operator
\begin{equation*}
    \mathrm{KS}_k(f) = f - \frac{i}{k}\nabla^k_{X_f}\colon  C^\infty(\Cc P^1, \mathcal{O}(k)) \to C^\infty(\Cc P^1, \mathcal{O}(k)),
\end{equation*}
where $\nabla^k$ is the connection on $\mathcal{O}(k)$.}
Notice that the operator $\mathrm{BT}_{k}(f)$ is different from ``standard'' geometric quantization, which is $\Pi\circ \mathrm{KS}_k$ as an operator $\Cc_{k}[z_1,z_2]\to \Cc_{k}[z_1,z_2]$. 
Via Tuynman's~\cite{Tu1987} formula \new{$\Pi \circ 2 i \nabla^k_{X_f} \circ \Pi = \Pi \circ (\Delta f)\circ \Pi$} one concludes the relation $\Pi\circ\mathrm{KS}_k(f) = \mathrm{BT}_k(f) - \frac{1}{2k} \mathrm{BT}_k(\Delta f)$.

To obtain the connection to quantization via representation as described above, consider the Berezin-Toeplitz operators associated with the coordinates functions $x_1,x_2,x_3$ of the sphere (again identified with $\Cc P^1$).
In the coordinate chart where $p\in \Cc_k[z_1,z_2]$ is identified with the one-variable polynomial $z_1 \mapsto p(z_1,1)$, they are given by
\begin{align*}
    &\mathrm{BT}_k(x_1) = \frac{1}{k+2}\left( (1-z_1^2)\frac{d}{dz_1} + k z_1 \right), \\
    &\mathrm{BT}_k(x_2) = \frac{i}{k+2}\left( (1+z_1^2)\frac{d}{dz_1} - k z_1 \right), \\
    &\mathrm{BT}_k(x_3) = \frac{1}{k+2}\left( 2z_1\frac{d}{dz_1} - k \right)\, .
\end{align*}
These operators provide a scaled representation of $\mathfrak{so}(3)$, namely
\begin{equation}\label{eq:BT_repr}
    [\mathrm{BT}_k(x_1),\mathrm{BT}_k(x_2)] = \frac{2i}{k+2}\mathrm{BT}_k(x_3),
\end{equation}
with cyclic permutations.
This reflects the well-known irreducible representation of $\mathfrak{so}(3)$ on $\Cc_k[z_1,z_2]$, corresponding to the $\mathrm{SU}(2)$ group representation where the action of $A\in\mathrm{SU}(2)$ is $(A\cdot p)(z) = p(A^{-1} z)$.
In his original publication, Berezin~\cite{Be1975} proves that any quantization on the sphere that is equivariant under the $\mathrm{SO}(3)$ action is unique up to scaling (it is essentially a consequence of the uniqueness of irreducible representations).
Thus, we get the connection to the quantization matrix operators $T_N(f)$ constructed in section~\ref{sec:quant-via-repr} above by identifying $\Cc_{k}[z_1,z_2] \simeq \Cc^{k+1}$.
First, the relation between $N$ and $k$ must be $N=k+1$.
Second, it is a matter of getting the scaling right.
Recall from section~\ref{sec:quant-via-repr} that $\left(\sum_\alpha T_N(x_\alpha)^2\right)^{1/2} = i I$.
On the other hand, the operators $\mathrm{BT}_k(x_\alpha)$ fulfill
\begin{equation*}
    \sum_{\alpha=1}^3 \mathrm{BT}_k(x_\alpha)^2 = \frac{k}{k+2}.
\end{equation*} 
Thus, we arrive at the following.

\begin{lemma}\label{lem:BT_equivalence}
    For $f\in C^\infty(\Ss^2)$, let $T_N(f)$ denote the matrix quantization operator constructed in section~\ref{sec:quant-via-repr} above. 
    Then, under the identification $\mathbb{C}_{N-1}[z_1,z_2]\simeq \mathbb{C}^N$, 
    \begin{equation*}
        T_N(f) = i\sqrt{\frac{N+1}{N-1}} \mathrm{BT}_{N-1}(f).
    \end{equation*}
\end{lemma}
From \eqref{eq:BT_repr}, the scaling in Lemma~\ref{lem:BT_equivalence} recovers \new{that $\{\cdot,\cdot\}$ is approximated, via $T_N$, by $\frac{1}{\hbar}[\cdot,\cdot]$} with $\hbar$ as in section~\ref{sec:quant-via-repr}.
In particular, since $\sqrt{\frac{N+1}{N-1}} = \mathcal{O}(1)$ as $N\to\infty$, Theorem~\ref{thm:quantization_limit} above by Charles and Polterovich~\cite{ChPo2018} is valid for $T_N$, although it was formulated for $\mathrm{BT}_k$.



\bibliographystyle{plain}
\bibliography{bib-zeitlin}

\end{document}